\documentclass[a4paper, 11pt, parskip=half-]{scrartcl}
\pdfoutput=1

\usepackage[thmmarks]{ntheorem}
\usepackage[utf8]{inputenc}
\usepackage{amsmath, amsfonts, amssymb}
\usepackage[scr=boondoxo]{mathalfa}
\usepackage[english]{babel}
\usepackage{csquotes}
\usepackage[backend=biber,style=alphabetic,uniquename=true,uniquelist=true]{biblatex}
\usepackage{graphicx}
\usepackage[colorlinks, linkcolor={cyan}, citecolor={cyan}, breaklinks=true]{hyperref}
\usepackage{tikz}
\usepackage{tikz-cd}
\usepackage{svg}
\usepackage[export]{adjustbox}
\usepackage[sc]{mathpazo}
\usepackage{microtype}
\usepackage{enumitem}

\numberwithin{equation}{section}
\theoremstyle{plain}
\theorembodyfont{\itshape}
\theoremheaderfont{\normalfont\sffamily\bfseries}
\theorembodyfont{\normalfont}
\theorempreskip{0.5\baselineskip}
\newtheorem*{itheorem*}{Theorem}
\theoremseparator{.}

\newcounter{thm}
\numberwithin{thm}{subsection}

\newtheorem{theorem}[thm]{Theorem}
\newtheorem{corollary}[thm]{Corollary}
\newtheorem{lemma}[thm]{Lemma}
\newtheorem{proposition}[thm]{Proposition}
\newtheorem{definition}[thm]{Definition}
\newtheorem{example}[thm]{Example}
\newtheorem{remark}[thm]{Remark}

\theoremstyle{nonumberplain}
\theoremheaderfont{\itshape}
\theoremseparator{.\,—}
\theoremsymbol{\ensuremath{\square}}
\newtheorem{proof}{Proof}

\title{Three-Dimensional Spin TFTs\\ from Gauging Line Defects}
\author{Jannik Gröne \qquad Ingo Runkel\\[0.5cm]
	\normalsize{\texttt{\href{mailto:jannik.groene@uni-hamburg.de}{jannik.groene@uni-hamburg.de}}} \\
	\normalsize{\texttt{\href{mailto:ingo.runkel@uni-hamburg.de}{ingo.runkel@uni-hamburg.de}}}\\[0.5cm]
	\normalsize\slshape Fachbereich Mathematik, Universität Hamburg,\\
	\normalsize\slshape Bundesstraße 55, 20146 Hamburg, Germany}
\date{}

\ExecuteBibliographyOptions{
    sorting=anyt,
    bibwarn=true,
    isbn=false,
    url=false,
    maxnames=7,
    doi=false,
}

\DeclareFieldFormat{doilink}{\iffieldundef{doi}{#1}{\href{http://dx.doi.org/\thefield{doi}}{#1}}}
\DeclareBibliographyDriver{article}{%
  \usebibmacro{bibindex}%
  \usebibmacro{begentry}%
  \usebibmacro{author/translator+others}%
  \setunit{\labelnamepunct}\newblock
  \usebibmacro{title}%
  \newunit
  \printlist{language}%
  \newunit\newblock
  \usebibmacro{byauthor}%
  \newunit\newblock
  \usebibmacro{bytranslator+others}%
  \newunit\newblock
  \printfield{version}%
  \newunit\newblock
  \usebibmacro{in:}%
  \printtext[doilink]{%
  \usebibmacro{journal+issuetitle}%
  \newunit
  \usebibmacro{byeditor+others}%
  \newunit
  \usebibmacro{note+pages}%
  }%
  \newunit\newblock
  \iftoggle{bbx:isbn}
    {\printfield{issn}}
    {}%
  \newunit\newblock
  \usebibmacro{doi+eprint+url}%
  \newunit\newblock
  \usebibmacro{addendum+pubstate}%
  \setunit{\bibpagerefpunct}\newblock
  \usebibmacro{pageref}%
  \usebibmacro{finentry}}

\addbibresource{refs.bib}

\DeclareMathOperator{\id}{\mathrm{id}}
\DeclareMathOperator{\im}{\mathrm{im}}
\DeclareMathOperator{\Obj}{\mathrm{Obj}}

\newcommand\Qb{\mathbb{Q}}
\newcommand\Rb{\mathbb{R}}
\newcommand\Cb{\mathbb{C}}
\newcommand\Zb{\mathbb{Z}}
\newcommand\Hb{\mathbb{H}}
\newcommand\Cc{\mathcal{C}}
\newcommand\Fc{\mathcal{F}}
\newcommand\Rc{\mathcal{R}}
\newcommand\Dc{\mathcal{D}}
\newcommand\Zor{\mathcal{Z}^\textsf{or}}
\newcommand\Zspin{\mathcal{Z}^\textsf{sp}}
\newcommand\Vect{\textsf{Vect}}
\newcommand\sVect{\textsf{sVect}}
\newcommand\Bord{\textsf{Bord}}
\newcommand\ALineCat{\mathcal{L}_A^\textsf{sp}}
\newcommand\TargetCat{\mathcal{S}}
\newcommand\Bordsp{\textsf{Bord}^\textsf{sp}_3}
\newcommand\hBordsp{\widehat{\textsf{Bord}}^\textsf{sp}_3}

\begin{document}
\maketitle
\begin{abstract}
From the input of an oriented three-dimensional TFT with framed line defects and a commutative $\Delta$-separable Frobenius algebra $A$ in the ribbon category of these line defects, we construct a three-dimensional spin TFT. The framed line defects of the spin TFT are labelled by certain equivariant modules over $A$, and the spin structure may or may not extend to a given line defect. Physically the spin TFT can be interpreted as the result of gauging a one-form symmetry in the original oriented TFT. This spin TFT extends earlier constructions in \cite{BM96,Bla03}, and it reproduces the classification of abelian spin Chern-Simons theories in \cite{BM05}.
\end{abstract}

\newpage

{\hypersetup{linkcolor={black}}
\tableofcontents
}

\section{Introduction}
In the 1990s it was noted that constructions of Witten-Reshetikhin-Turaev invariants can split into contributions coming from different choices of spin structure. A corresponding spin TFT using skein modules and the universal construction was given in~\cite{BM96}. The refinement of the state spaces and invariants was also described via a Reshetikhin-Turaev surgery construction for $\mathfrak{sl}_2$ in~\cite{Bel98}, together with a refinement of Turaev-Viro TFTs. In~\cite{Bla03} the refinements of the Reshetikhin-Turaev TFTs were extended to all those with so-called spin modular categories as input, though no full TFT construction was given.

    Another way to construct new TFTs from known ones is via the gauging of (higher) symmetries. In this paper we will use the gauging line defects in three-dimensional oriented TFTs to produce a new TFT sensitive to a choice of spin structure on a given bordism, yielding all examples above as special cases. A construction conceptually similar to ours has been described in~\cite{GK16,HLS19}. The procedure is also known as fermionic anyon condensation in condensed matter physics. A description of the algebraic data for the condensation which is similar to ours can be found in~\cite{WW17} and a construction for state-sum like models in terms of tube categories was given in~\cite{ALW19}.

We will implement the gauging procedure using an internal state sum construction (or orbifold construction), restricted to one-dimensional strata. For two-dimensional TFTs with $r$-spin structure this has already been implemented in~\cite{NR20}. For oriented TFTs in any dimension, the internal state sum construction was developed in \cite{CRS19} and specialised to three-dimensional Reshetikhin-Turaev TFTs in \cite{CMRSS21}. Our construction generalises the commutative algebra example given there to the spin case, and it would be interesting to also generalise the full construction.

Our main result (in a slightly simplified version) is
\begin{itheorem*}{\textsf{\textbf{\!\ref{thm:zspin_is_a_TFT}.\,}}}
    Let $\TargetCat$ be an idempotent-complete symmetric monoidal category (such as $\Vect$ or $\sVect$) and let $\Cc$ be a ribbon category with $A\in\Cc$ a commutative $\Delta$-separable Frobenius algebra. Then from a symmetric monoidal functor (compatible with the skein relations for $\Cc$)
    \[
        \Zor: \Bord_3^\textsf{or}(\Cc)\to\TargetCat
    \]
    one can construct a symmetric monoidal functor
    \[
        \Zspin: \Bordsp(\ALineCat)\to\TargetCat\,.
    \]
\end{itheorem*}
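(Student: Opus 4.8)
The plan is to build $\Zspin$ by an internal state-sum (orbifold) construction supported on one-dimensional strata, in the spirit of \cite{CRS19,CMRSS21}, but with the combinatorial input enlarged so that it also encodes a spin structure. For a morphism of $\Bordsp(\ALineCat)$ — an oriented $3$-bordism $M$ with a spin structure $\sigma$ and an embedded ribbon graph $\Gamma$ coloured by $\ALineCat$ — first choose a cell decomposition of $M$ compatible with $\Gamma$ and with chosen decompositions of the incoming and outgoing boundary; refine it so that the dual complex has cells of bounded valence. Then record $\sigma$ combinatorially relative to this decomposition, for instance as a $\Zb/2$-valued ``framing datum'' on the $1$-cells of the dual complex subject to vertex compatibility conditions, equivalently as a $\Zb/2$-cochain solving the relevant obstruction equation against a fixed reference spin structure. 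The boundary surfaces, with their spin structures and marked points, are treated the same way.

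Next, decorate the dual $1$-skeleton: colour each $1$-cell by the Frobenius algebra $A$ and each vertex by the iterated (co)multiplication of $A$ prescribed by the local combinatorics, and splice $\Gamma$ into the network using the (equivariant) $A$-module structures on the labels in $\ALineCat$. The spin datum enters through the framings of the strands: a nontrivial value of the framing datum on a $1$-cell inserts a half-twist on the corresponding $A$-strand, and the vertex conditions guarantee that these local prescriptions glue to a globally consistent framed $\Cc$-coloured ribbon graph in $M$. This is exactly where spin is needed: a framing change acts on the decorated network by (a priori nontrivial) twists, and the spin structure is precisely the datum that trivialises the resulting order-two ambiguity; moreover whether $\sigma$ must restrict nontrivially along a component of $\Gamma$ is encoded in whether its $\ALineCat$-label is of the local (untwisted) or the twisted type, which is the meaning of ``the spin structure may or may not extend to a given line defect''. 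Applying $\Zor$ to the resulting ribbon graph, which is legitimate since $\Zor$ is compatible with the skein relations of $\Cc$, gives a morphism of $\TargetCat$; one defines $\Zspin(M,\sigma,\Gamma)$ to be this morphism, once independence of the auxiliary choices has been established.

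That independence is the heart of the argument. Independence of the cell decomposition follows the classical state-sum pattern: it suffices to check invariance under the bistellar (Pachner) moves, and each of these becomes an identity among the structure morphisms of $A$ — associativity, coassociativity, the Frobenius relations, commutativity, and, crucially, $\Delta$-separability $\mu\circ\Delta=\id_A$, which is what lets the construction proceed without a normalisation factor. These identities hold in $\Cc$ and therefore, by skein-compatibility, after applying $\Zor$. Independence of the combinatorial representative of $\sigma$ requires in addition the moves relating two solutions of the obstruction equation that describe the same spin structure; under the decoration these translate into the statements that a half-twist on an $A$-strand may be slid through a vertex and through a component of $\Gamma$, which one checks from the ribbon structure of $\Cc$, the (co)commutativity of $A$, and the compatibility of the equivariant module structures with the twist. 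Reconciling this framing bookkeeping with the combinatorial spin moves, uniformly over all $\ALineCat$-labels, is the step I expect to be the main obstacle — it is where the construction genuinely departs from the oriented one of \cite{CMRSS21}.

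Finally, assemble $\Zspin$ into a symmetric monoidal functor. For a spin surface $\Sigma$ with marked points, take $\Zspin(\Sigma)$ to be the image of the idempotent obtained by applying $\Zor$ to the decorated cylinder on $\Sigma\times[0,1]$; this is an idempotent by $\Delta$-separability, and here idempotent-completeness of $\TargetCat$ (which holds for $\Vect$ and $\sVect$) is used. Gluing compatibility follows by choosing cell decompositions and spin representatives on composable bordisms that agree along the gluing surface, so that the decorated network of the composite restricts to those of the pieces; functoriality of $\Zor$ together with the independence of choices just established then gives $\Zspin(M'\cup_\Sigma M)=\Zspin(M')\circ\Zspin(M)$, with identity bordisms going to identities. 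Compatibility with disjoint unions is immediate from the locality of the construction, and the symmetric monoidal structure — including the signs coming from the spin structures — is inherited from those of $\Zor$ and $\TargetCat$. This produces the functor $\Zspin\colon\Bordsp(\ALineCat)\to\TargetCat$ of Theorem~\ref{thm:zspin_is_a_TFT}.
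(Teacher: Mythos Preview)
Your outline is essentially the paper's strategy: encode the spin structure combinatorially on a $1$-skeleton, colour it by $A$, splice in the external graph, evaluate under $\Zor$, and check independence of choices via Frobenius identities. Two differences are worth flagging. First, the paper works with PLCW decompositions and Kirillov's elementary subdivisions rather than simplicial complexes and Pachner moves; this is a bit lighter, since only $1$- and $2$-cell subdivisions touch the $1$-skeleton and both reduce to $\Delta$-separability via a loop-removal lemma, and the spin structure is carried as the homotopy class of framing on the $1$-skeleton itself rather than on a dual. Second, and this is a genuine imprecision: there are no half-twists in a ribbon category. What a change of spin cocycle actually inserts on an $A$-strand is a full twist $\theta_A$, which by Proposition~\ref{prop:twist_is_nakayama+trivialisation+cocomm} equals the Nakayama automorphism $N_A$ and squares to $\id_A$; this is the algebraic shadow of the two homotopy classes of framing on an edge with fixed endpoints, and your ``sliding'' move is then commutation of $N_A$ with the (co)multiplication together with the equivariance~\eqref{eq:module_nakayama_equivariance} on module strands.

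The step you correctly identify as the main obstacle is genuinely underspecified in your sketch and is where most of the paper's work goes. Integrating the $\ALineCat$-coloured graph $\Gamma$ into the $A$-network requires several concrete ingredients: inserting projectors and injections for $\otimes_A$ at each coupon (and (co)units for maps to or from the monoidal unit); attaching each connected component of $\Gamma$ to the $A$-skeleton by a single action morphism with arbitrary framing, the ambiguity being absorbed by pairs of $t_M$; and inserting $t_M$'s along $\Gamma$ according to the class in $H^1(G_s\cup\Gamma;\Zb_2)$ measuring the discrepancy between the ribbon framing of $\Gamma$ and the framing the spin structure induces on a push-off of $\Gamma$. That the result respects $\ALineCat$-skein relations is a separate proposition, proved by reducing an arbitrary ball to a single coupon via the $\ALineCat$-structure maps and checking each replacement against the $\Cc$-skein relations. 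Finally, for surfaces with punctures a single idempotent does not suffice --- it depends on the choice of boundary connections $C_P$ --- and the paper defines the state space as a limit over all such choices rather than as a bare image.
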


In the above theorem, $\Bord_3^\textsf{or}(\Cc)$ stands for the bordism category with three-dimensional oriented bordisms and embedded $\Cc$-coloured ribbon graphs, while in $\Bordsp(\ALineCat)$ the bordisms (and surfaces) are equipped with a spin structure which may or may not be singular at the $\ALineCat$-coloured ribbons. The ribbon category $\ALineCat$ will be briefly described below. We note that $\Cc$ need not be modular: in the examples considered in Section~\ref{sec:applications}, $\Cc$ has symmetric centre $\Vect$ or $\sVect$, and $\TargetCat$ agrees with the symmetric centre. The twist $\theta_A$ of the algebra $A$ necessarily squares to $\id_A$ (see Section~\ref{sec:comm-FA}). If we already have $\theta_A = \id_A$, or equivalently if $A$ is in addition symmetric, the resulting TFT $\Zspin$ does not distinguish spin structures.

The spin TFT is constructed by inserting 1-skeleta coloured by $A$ encoding the spin structure and evaluating under the oriented TFT. The conditions on $A$ (commutative, $\Delta$-separable and Frobenius) guarantee invariance under the choice of skeleton. The prototypical (and in some sense generic, see Theorem~\ref{thm:simple_commutative_frobenius_algebras}) algebra is $A = 1\oplus f$, where $1$ is the monoidal unit and $f$ is a fermionic object in $\Cc$, i.e.\ one with ribbon twist $\theta_f = -1$ and $f\otimes f=1$. Commutativity of $A$ furthermore requires $f$ to have trivial self-braiding, and these conditions together imply that the quantum dimension of $f$ is $\dim f = -1$. If $\theta_f=-1$ but $\dim f = 1$ (as happens in unitary theories), the self-braiding is non-trivial. In this case we first extend the theory by taking the product with $\sVect$, so that the parity-shifted copy of $f$ has the required properties (see Section~\ref{sec:spin-modular-cat}).

The construction changes the category of line defects we allow to an appropriate category of modules over $A$. In the case of an oriented gauging of line defects, the resulting category is that of local modules, see e.g.~\cite{Kon14} and \cite{CRS20}. For the spin case we slightly extend that notion to what we call ``Nakayama-local'' modules, which satisfy the locality condition only up to precomposition of the module action by a Nakayama automorphism (Definition~\ref{def:Nakayama-local}). We arrive at our category $\ALineCat$ of line defects by equivariantising with respect to that action. In the case of $A=1\oplus f$ from above the resulting category will (under mild technical assumptions) be equivalent to the original category of line defects: $\ALineCat \cong \Cc$ as ribbon categories.

This is different to the setting in~\cite{GK16,HLS19}, where one would only keep those defects transparent to $f$. The additional defects describe singularities in the spin structure and are included in the skein-module based construction in~\cite{BM96}. We may think of them as living in the Ramond sector of the cylinder around line defect.

Another result we generalise from~\cite{BM96} is
\begin{itheorem*}{\textsf{\textbf{\!\ref{thm:spin_refinement}.\,}}}
    For the input $A=1\oplus f$ the gauging construction canonically exhibits
    \begin{align*}
        \Zor(\Sigma; A) &\cong \bigoplus_{\sigma\in\mathrm{Spin}(\Sigma)} \Zspin(\Sigma,\sigma)\\
        \intertext{and}
        \Zor(M) &= 2^{-\mu}\sum_{s\in\mathrm{Spin}(M)} \Zspin(M,s)
    \end{align*}
    for $\Sigma$ a closed surface and $M$ a connected bordism, $\mu$ given by $b_0(\partial_- M)$ if the boundary is non-empty and 1 else, and $\mathrm{Spin}(\Sigma)$ and $\mathrm{Spin}(M)$ appropriate classes of spin structure.
\end{itheorem*}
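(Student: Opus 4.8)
The plan is to reduce both formulas to the decomposition $A = 1 \oplus f$ together with the combinatorial encoding of a spin structure by a $\Zb/2$-labelling of a gauging skeleton. Recall that $\Zspin$ of a surface (or bordism) $X$ with spin structure is computed by choosing a $1$-skeleton $\Gamma \subset X$, reading off from the spin structure the induced combinatorial datum, and applying $\Zor$ to the resulting $\ALineCat$-coloured ribbon graph; for $A = 1 \oplus f$ this datum is exactly a map $\ell\colon E(\Gamma)\to\{0,1\}$ recording where the spin structure differs from a fixed reference framing of $\Gamma$, and $\Zspin(X,\text{spin}) = \Zor(X;\Gamma_\ell)$ with $\Gamma_\ell$ the graph whose edges are coloured by $1$ (where $\ell=0$) or $f$ (where $\ell=1$). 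On the other hand $\Zor(X;A)$ colours every edge of $\Gamma$ by $A$ and every vertex by the structure morphisms of $A$. Expanding $A = 1\oplus f$ multilinearly and using additivity of $\Zor$ gives, for a closed surface $\Sigma$,
\[
    \Zor(\Sigma;A) \;\cong\; \bigoplus_{\ell\colon E(\Gamma)\to\{0,1\}} \Zor(\Sigma;\Gamma_\ell),
\]
and since $f\otimes f\cong 1$, $f$ has trivial self-braiding (commutativity of $A$) and $\dim f = -1$, the structure maps of $A$ restrict to $0$ on $\Gamma_\ell$ unless $\ell$ satisfies, at each vertex, the local flux-conservation condition; write $L(\Gamma)$ for the set of these \emph{admissible} labellings. (For a bordism the direct sum becomes a sum of morphisms once a boundary colouring is fixed.)

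For the closed-surface identity I would first invoke invariance of $\Zor(-;A)$ under changes of skeleton — exactly what the $\Delta$-separable commutative Frobenius structure on $A$ provides — to pass to a minimal skeleton, e.g.\ the one-vertex, $2g$-loop spine of $\Sigma_g$. For such $\Gamma$ the flux condition is vacuous and coboundaries vanish, so $L(\Gamma) = (\Zb/2)^{2g}$, of the same size $2^{2g}$ as $\mathrm{Spin}(\Sigma)$. The real point is then a \emph{canonical} bijection $L(\Gamma)\cong\mathrm{Spin}(\Sigma)$: a spin structure is the same as a quadratic refinement of the mod-$2$ intersection form on $H_1(\Sigma;\Zb/2)$, and because $\theta_f = -1$ an $f$-loop along a class $x$ shifts the refinement by the generator dual to $x$ — the topological mechanism already underlying \cite{BM96}. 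Matching this with the combinatorial spin datum defining $\Zspin$ identifies each summand $\Zor(\Sigma;\Gamma_\ell)$ with $\Zspin(\Sigma,\sigma)$, and skeleton-independence promotes the decomposition to a canonical one. Idempotent-completeness of $\TargetCat$ makes it an honest direct sum, the summands being the images of the $\Zor$-images of the grading idempotents in $\mathrm{End}_\Cc(A^{\otimes n})$; no prefactor appears because for the minimal skeleton $L(\Gamma)$ already biject precisely with $\mathrm{Spin}(\Sigma)$.

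For a connected bordism $M$ the same expansion gives $\Zor(M) = \Zor(M;A) = \sum_{\ell\in L(M,\Gamma)}\Zor(M;\Gamma_\ell)$, where $\Gamma$ is now a skeleton of $M$ restricting to the boundary skeleton and $L(M,\Gamma)$ consists of the admissible labellings inducing a fixed labelling on $\partial_-M$ (the precise choice of fixed boundary data is the ``appropriate class'' of spin structures in the statement). Each term $\Zor(M;\Gamma_\ell)$ equals $\Zspin(M,s)$ for the spin structure $s$ represented by $\ell$; distinct admissible labellings in the same cohomology class differ by a coboundary, hence represent the same $s$ and — because sliding $f$-lines across vertices is an equality for a commutative $\Delta$-separable Frobenius algebra — give equal morphisms. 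Collecting the terms of $\sum_\ell$ into cohomology classes therefore produces an over-counting factor; computing it from the relation $|Z^1| = |H^1|\cdot|B^1|$ and $|B^1| = |C^0|/|H^0|$, and using skeleton-independence to discard the skeleton-dependent part, leaves exactly $2^{-\mu}$ with $\mu = b_0(\partial_-M)$ when $\partial M\neq\emptyset$, and the standard $|H^0(M;\Zb/2)|^{-1} = \tfrac12$ gauging factor ($\mu=1$) when $M$ is closed. This gives the second identity.

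The main obstacle is the canonical, gluing-compatible identification of admissible labellings with spin structures. It must be natural enough to be simultaneously compatible with the functoriality of $\Zspin$ — so that the block structure of $\Zor(M)$ relative to the boundary decompositions of the first identity matches $\sum_s\Zspin(M,s)$ — and insensitive to the skeleton; here the interplay between the Nakayama automorphism of $A$ (the genuinely spin, rather than oriented, input) and the framing twist $\theta_f = -1$ has to be tracked carefully. A second delicate point is the normalisation: since $\dim f = -1$ forces $\dim A = 0$, none of the usual $\dim A$-weights of the orbifold construction are available, so the factor $2^{-\mu}$ must come \emph{only} from the $1$-stratum bubble- and fusion-moves supplied by $\Delta$-separability and commutativity, and one must verify that the count of admissible labellings relative to the boundary versus spin structures relative to the boundary really yields the exponent $b_0(\partial_-M)$ (and not $b_0(\partial M)$ or $b_0(M)$), as well as pin down exactly which class of spin structures is being summed.
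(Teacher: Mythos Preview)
There is a genuine gap in the proposal, stemming from a misidentification of how the spin structure is encoded in the construction.

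In the paper, $\Zspin(M,s)$ is obtained by inserting an $A$-coloured skeleton $G_s$ into $M$, where every edge carries the full algebra $A$ and the spin structure $s$ is encoded in the \emph{framing} of the edges. Two spin structures that differ on an edge correspond to graphs that differ by a Nakayama (equivalently twist) insertion $N_A=\theta_A$ on that edge. Your claim that ``$\Zspin(X,s)=\Zor(X;\Gamma_\ell)$ with edges coloured by $1$ or $f$'' is therefore not correct: projecting an edge to the summand $f$ is a different operation from inserting a twist on an $A$-edge. Concretely, $p_1=\tfrac12(\id_A+N_A)$ and $p_f=\tfrac12(\id_A-N_A)$, so your $\{1,f\}$-expansion and the paper's framing-sum are related by a Fourier transform on $\Zb_2^{|E|}$, and the individual terms do not match.

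This has a direct consequence for the bordism identity. Writing $\Zor(M)=\Zor(M;A)$ is not right: evaluating $\Zor$ on a bordism with an $A$-coloured skeleton in a \emph{fixed} framing already computes one particular $\Zspin(M,s_0)$, not the oriented invariant. Your multilinear expansion therefore decomposes a single spin value $\Zspin(M,s_0)$ into $\{1,f\}$-coloured pieces, rather than relating $\Zor(M)$ to a sum over spin structures. The paper instead sums over all Nakayama insertions $\nu\in\Zb_2^{|E|}$: on each edge this produces $\id_A+N_A=2\,p_B$, so the total becomes $2^{|E|}$ times the $B$-coloured graph, which (for $B=1$) is $\Zor(M)$. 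On the other side, terms with $\nu$ not representing a spin structure vanish by Corollary~\ref{cor:even_loop_vanishes}, and the multiplicity with which each spin structure appears is read off from the size of the image of the cellular coboundary in $C^1$ relative to the marked points; this is where the exponent $b_0(\partial_-M)$ (resp.\ $1$ in the closed case) arises. For the state spaces the same mechanism shows that the projectors onto the various $\Zspin(\Sigma,\sigma)$ are pairwise orthogonal and sum to the identity on $\Zor(\Sigma;A)$. Your ``flux-conservation'' picture captures the orthogonal $\{1,f\}$-grading of $\Zor(\Sigma;A)$, but that grading is not the spin decomposition; to reach the theorem you would have to pass through the Fourier relation above, which is essentially the paper's argument.
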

The full theorem is slightly more complicated to state, but also allows for generic choices of $A$ and non-connected $M$. This shows that the gauging construction (or rather the spin part of it) is invertible. Furthermore, the splitting can help to simplify the calculation of some spin invariants, such as the mapping class group representations induced by the spin TFT. An example of such a calculation is given at the end of Section~\ref{subsec:abelian_cs}.

When we apply the construction to Reshetikhin-Turaev TFTs derived from spin modular categories (i.e.\ categories with a distinguished fermionic object) we recover the spin Witten-Reshetikhin-Turaev invariants of~\cite{BM96,Bel98,Bla03,BBC17} (Proposition~\ref{prop:spin-TFT-produces-MF-invariants}) as well as the state spaces calculated in~\cite{BM96,Bel98,Bla03} (Proposition~\ref{prop:spin-RT-state-spaces}). An interesting aspect is that spin gauging in a unitary RT-TFT will require extending the target of the TFT from $\Vect$ to $\sVect$, as was already noted in \cite{BM96} in the class of examples considered there. On a technical level the reason is that the fermion in a unitary spin modular category will never have trivial self-braiding and therefore will never yield a commutative algebra. From a physics perspective this can be expected as a necessary prerequisite to formulate fermionic commutation relations. Note however that non-unitary theories can give bona fide spin invariants even without needing to extend to $\sVect$.

Finally, we compare our results to the classification of abelian spin Chern-Simons theories of~\cite{BM05}. We show that the classifying data is (up to a choice of a third root related to the deprojectification of the mapping class group action) in one-to-one correspondence with pointed spin modular categories. More precisely,~\cite{BM05} classify the spin theories by triples of a finite abelian group $\mathscr{D}$, an equivalence class of quadratic forms $q:\mathscr{D}\to\Qb/\Zb$, and an element $\sigma\in\Zb_{24}$. Disregarding the choice of a third root mentioned above, we will instead take $\sigma\in\Zb_8$. Pointed spin modular categories on the other hand can be classified by triples consisting of a finite abelian group $G$, a normalised quadratic form $\hat q:G\to\Qb/\Zb$ and a fermion $f\in G$. Taking the groupoids of both kinds of triples induced by group isomorphisms compatible with the additional data, we construct an essentially surjective functor from triples $(G,\hat q,f)$ to triples $(\mathscr{D},[q],\sigma)$ which is two-to-one on morphism spaces (Proposition~\ref{prop:spin-abelian-Chern-Simons}). In particular, there is a one-to-one correspondence between isomorphism classes of both kinds of triples. We further show that the state spaces of the gauged theories coincide with the ones of the Chern-Simons theories and that the mapping class group representations on the (unpunctured) torus are isomorphic.

\medskip

The rest of this paper is organised as follows:

In Section~\ref{sec:spin_geometry} we will quickly review the standard description of spin structures on cell complexes and describe the effect of changing the representing cell complex.

In Section~\ref{sec:frobenius_algebras} we will introduce the algebraic data of commutative $\Delta$-separable Frobenius algebras and Nakayama-local modules.

In Section~\ref{sec:gauging_construction} we will present the main gauging construction and the splitting theorem.

In Section~\ref{sec:applications} we finally apply the construction to oriented Reshetikhin-Turaev TFTs and abelian Chern-Simons theories.

\subsubsection*{Acknowledgements}
We thank Sven Möller for helpful comments.
JG and IR are supported in part by the Collaborative Research Centre CRC 1624 ``Higher structures, moduli spaces and integrability'' - 506632645.
IR is partially supported by the Deutsche Forschungsgemeinschaft (DFG, German Research Foundation) under Germany`s Excellence Strategy - EXC 2121 ``Quantum Universe" - 390833306.

\section{Spin Geometry}\label{sec:spin_geometry}
We will review the theory of spin structures on cell complexes. The model we will employ is the one of piecewise linear CW spaces developed in~\cite{Kir12}, as this is good compromise for our application: On the one hand the PLCW structure gives much of the flexibility of general CW complexes, which makes (co)homological data and classical obstruction theory easily accessible, while on the other hand providing a small set of manipulations that translate between different complexes for the same (piecewise linear) manifold.

\subsection{PLCW Spaces}
We state the important definitions and results for PLCW complexes from~\cite{Kir12}. The basic setting is that we work with embedded polyhedra, i.e.\ spaces for which a triangulation exists.

Let $B^n=[-1,1]^n\subset\Rb^n$ denote the standard (piecewise linear) $n$-ball.
\begin{definition}
    An \emph{$n$-cell} in $\Rb^N$ is a subset of $\Rb^N$ together with a splitting into disjoint union $\varphi(\mathrm{int}(B^n))\sqcup \varphi(\partial B^n)$ for some regular piecewise linear map $\varphi:B^n\to\Rb^N$ called the \emph{characteristic map}.
\end{definition}

\begin{definition}
    A \emph{PLCW complex} $K\subset\Rb^N$ is a collection of $k$-cells for $k\le n$. For $n>0$ it is subject to the following inductively defined conditions:
    \begin{enumerate}
        \item For any two distinct $n$-cells $A$ and $B$ with characteristic maps $\varphi_A$ and $\varphi_B$ we have that $\varphi_A(\mathrm{int}(B^n))\cap\varphi_B(\mathrm{int}(B^n)) = \emptyset$.

        \item For any $n$-cell $C$ with characteristic map $\varphi_C$ the boundary $\varphi_C(\partial B^n)$ is a union of cells.

        \item The collection $K^{n-1}$ of cells of dimension at most $n-1$ is a PLCW-complex.

        \item Let $A$ be an $n$-cell given by a characteristic map $\phi:B^n\to\Rb^N$. Then there exists a PLCW decomposition of $\partial B^n$, such that $\phi|_{\partial B^n}$ is a regular piecewise linear map $\partial B^n\to\partial A$ respecting the cell structure.
    \end{enumerate}

    A PLCW decomposition of a compact polyhedron $X\subset\Rb^N$ is a PLCW complex which yields $X$ as the union of its cells.
\end{definition}
Different decompositions of the same manifold can be related by a set of elementary local moves, called elementary subdivisions and their inverses.
\begin{definition}
    An \emph{elementary subdivision} of an $n$-cell is constructed as follows: Assume that the equator of a cell $\varphi(\partial B^n)$ is a union of cells. Then we replace the cell by the following cells: $\varphi(B^n\cap \Hb_+)$, $\varphi(B^n\cap \Hb_-)$ and $\varphi(B^n\cap \Hb_+\cap \Hb_-)$, where $\Hb_\pm$ is the standard closed upper (lower) half space of $\Rb^n$ (with respect to the $n$-th coordinate).
\end{definition}

\begin{figure}
    \centering
    \includegraphics{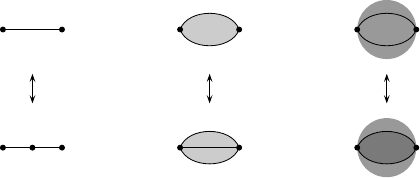}
    \caption{Simple examples of subdivisions of 1-, 2- and 3-cells.}\label{fig:cell_subdivision}
\end{figure}

We can describe this as dividing an $n$-cell into two parts by inserting an $(n-1)$-cell compatible with the rest of the cell decomposition, as shown in Figure~\ref{fig:cell_subdivision}. In this setting we have the following Theorem:
\begin{theorem}[\cite{Kir12}, Thm.\,8.1]
    Let $K$ and $K'$ be two PLCW decompositions of the same compact polyhedron $X$. Then $K$ and $K'$ are related by a finite number of elementary subdivisions and their inverses.
\end{theorem}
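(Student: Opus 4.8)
The plan is to reduce the general comparison of two PLCW decompositions of $X$ to the special case in which one decomposition is a subdivision of the other, and then to settle that special case by an inductive ``peeling'' argument carried out on a single cell. I would use two standard facts available in the piecewise linear category: (i) every PLCW complex admits a subdivision that is a simplicial complex; and (ii) any two triangulations of a fixed compact polyhedron admit a common subdivision — for the latter one can take the polytopal (hence PLCW) complex whose cells are the nonempty intersections $\sigma\cap\tau$ of simplices $\sigma$ of the first triangulation and $\tau$ of the second, together with their faces.

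The technical core is a \emph{subdivision lemma}: if a PLCW complex $K'$ subdivides a PLCW complex $K$ (every cell of $K'$ lies in a cell of $K$, and every cell of $K$ is a union of cells of $K'$), then $K'$ is obtained from $K$ by finitely many elementary subdivisions. I would prove this by induction on $n=\dim K$, and for fixed $n$ by induction on the excess $\#\{n\text{-cells of }K'\}-\#\{n\text{-cells of }K\}$. By the outer induction, the restriction of $K'$ to the $(n-1)$-skeleton is reached from $K^{n-1}$ by elementary subdivisions of cells of dimension at most $n-1$; these extend unproblematically to the ambient complex, since they merely refine the boundaries of the $n$-cells, so we may perform them first and thereby reduce to the case where $K$ and $K'$ have the same $(n-1)$-skeleton and differ only in how the $n$-cells are subdivided. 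Fixing one $n$-cell $c$ of $K$ that is genuinely subdivided in $K'$, the statement becomes a claim about a single PL $n$-ball: any PLCW decomposition of $B^n$ that is trivial on $\partial B^n$ can be built up by elementary (equatorial) subdivisions. Here one peels off one cell at a time — a general-position argument should produce, at each stage, an equatorial $(n-1)$-sphere inside the current top cell that is a union of cells of the remaining subdivision, so that a single elementary subdivision splits off exactly one more cell of $K'|_c$ and leaves a smaller ball to which the inner induction applies.

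Granting the subdivision lemma, the theorem follows quickly: given PLCW decompositions $K$ and $K'$ of $X$, pick simplicial subdivisions $T$ of $K$ and $T'$ of $K'$ using fact (i), then a common subdivision $L$ of $T$ and $T'$ using fact (ii); by transitivity $L$ subdivides both $K$ and $K'$. The subdivision lemma expresses the passage from $K$ to $L$ as a sequence of elementary subdivisions, and likewise the passage from $K'$ to $L$; concatenating the first sequence with the reverse of the second exhibits $K$ and $K'$ as related by finitely many elementary subdivisions and their inverses.

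I expect the main obstacle to be precisely the single-ball step inside the subdivision lemma: showing that an arbitrary subdivision of $B^n$ relative to its boundary can always be separated, one cell at a time, along an equatorial subsphere compatible both with the subdivisions already performed and with those still to come. This is where the piecewise linear hypotheses and general-position arguments genuinely enter, and keeping the admissibility condition correct — the chosen equator being a union of cells at every stage — is the delicate part; by contrast, the surrounding combinatorics and the dimension induction are routine.
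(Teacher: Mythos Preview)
The paper does not contain its own proof of this statement: it is quoted verbatim as \cite[Thm.\,8.1]{Kir12} and used as a black box, so there is nothing in the present paper to compare your proposal against.

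For what it is worth, your outline is broadly in line with Kirillov's actual argument: one first passes to a common PLCW subdivision (Kirillov does this via the intersection of the two cell structures after a general-position adjustment, rather than via intermediate triangulations), and then shows that any PLCW subdivision can be reached by elementary moves. You have correctly located the genuine difficulty in the second step. Your plan to ``peel off one cell at a time'' inside a single $n$-ball is too optimistic as stated: an arbitrary PLCW subdivision of $B^n$ relative to $\partial B^n$ need not contain an embedded $(n-1)$-sphere that is a subcomplex and that bounds a single cell on one side, so you cannot in general realise each step as one elementary subdivision. Kirillov's proof instead goes through an intermediate device (what he calls \emph{radial subdivisions}) and shows separately that radial subdivisions are generated by elementary subdivisions; this is what makes the induction close. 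If you want a self-contained proof, you should expect to introduce some auxiliary class of moves of this sort rather than working directly with equatorial splittings.
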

This theorem is the primary reason we work with $PLCW$-complexes rather than $CW$-complexes. It allows us to relate all different decompositions by considering only a small number of moves without having to contend with the rigidity and large amount of different moves other decompositions such as triangulation bring.

\subsection{Spin Structures}\label{subsec:spin_structure}
A spin structure on a principal $SO(n)$-bundle $P\xrightarrow[]{SO(n)}M$ is (the isomorphism class of) a double cover of $P$ by a principal $Spin(n)$-bundle $\tilde P\xrightarrow[]{Spin(n)} M$ such that the diagram
\[
    \begin{tikzcd}[row sep=0.1em]
        \tilde P \times Spin(n) \ar[r] \ar[dd] & \tilde P \ar[dd, "2:1"] \ar[dr] & \\
        & & M\ ,\\
        P \ar[r] \times SO(n) & P \ar[ur]
    \end{tikzcd}
\]
with the horizontal arrows given by the actions on the principal bundles, commutes. Alternatively such an isomorphism class of equivariant covers corresponds to a cohomology class $H^1(P;\Zb_2)$ that restricts to the generator $1\in H^1(SO(n);\Zb_2)\cong\Zb_2$ on each fibre. Given an $n$-dimensional vector bundle over an oriented manifold $M$ we can associate to it the bundle of frames, which is a principal $SO(n)$-bundle unique up to homotopy (arising from different choices of Riemannian metric on $TM$). We may thus talk about spin structures for vector bundles. In the case of the vector bundle being $TM$ we speak of a spin structure on $M$. For dimension $n\le2$ it is sometimes useful to talk about the stabilised tangent bundle $\tau_M$ instead. This is natural when we consider these manifolds as living in an (extended) bordism category of higher dimension. In practice this will mean for us that we will consider spin structures on $T\Sigma\oplus\underline{\Rb}$ for surfaces $\Sigma$, where $\underline{\Rb}$ is the trivial line bundle over $\Sigma$, and spin structures on $TM$ in three dimensions.

We will from now on assume that $n\ge 3$.

\begin{figure}
    \centering
    \includegraphics{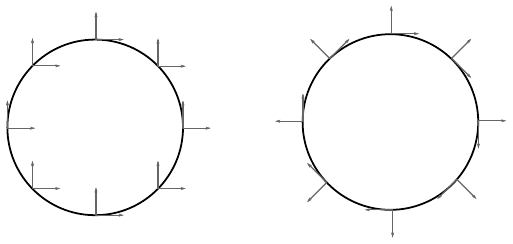}
    \caption{The two spin structures on $S^1$ for a $SO(n)$ bundle with $n\ge3$. We only draw the first two vectors of each frame for visual clarity. For the spin structure on the left the frame is constant and can be extended over the disk by contracting the circle. For the one on the right this is not possible.}
    \label{fig:s1_spin_structures}
\end{figure}
Classical obstruction theory gives a description of spin structures for spaces equipped with a kind of skeleton, such as simplicial complexes and (PL)CW-complexes. Spin structures then are homotopy-classes of trivialisations of the frame bundle over the $1$-skeleton, which must extend over the $2$-skeleton. The latter condition is characterised the vanishing of a characteristic class $w_2\in H^2(M;\Zb_2)$, called the second Stiefel-Whitney class. One can calculate a cocycle representing $w_2$ as follows: For any $2$-cell the trivialisation on the $1$-skeleton determines via pullback a trivialisation of the pullback bundle on $\partial B^2 = S^1$. This trivialisation is either bounding to a trivialisation on $B^2$, in which case $w_2$ is zero, or non-bounding, in which case $w_2$ does not vanish, and so the trivialisation does not describe spin structure. This is illustrated in Figure~\ref{fig:s1_spin_structures}. For details and further generalities on spin structures see~\cite{Mil63}.

We will now describe the interplay of classical obstruction theory with the theory of PLCW spaces in the spin case. When subdividing cells, we only really have to adjust subdivisions of 2-cells: The subdivision of a 1-cell does not affect the trivialisation, and clearly does not affect the extendability over the 2-cells. The subdivision of a 3-cell inserts a new 2-cell, but the added vanishing condition is not independent of those already applied. In particular we think of dividing a 3-ball in half, with a 2-cell attached, say, at the equator of the bounding 2-sphere (cf.\ the third example in Figure~\ref{fig:cell_subdivision}). We know the bundle trivialises over both hemispheres of the 2-skeleton and we can just homotope one of the trivialisations onto the newly inserted 2-cell. Obviously, the subdivision of even higher dimensional cells has no bearing on the spin structure.

For the subdivision of a 2-cell, the homotopy class of the framing along the new 1-cell is uniquely determined by the vanishing condition of the subdivided 2-cell. Both halves of the boundary of the original 2-cell must describe homotopic framed paths for the total path to be bounding. The framing of the new 1-cell must be precisely the one homotopic to both the halves. The vanishing condition for $w_2$ is then satisfied by construction.

\begin{remark}\label{rem:spin-compose}
    For the description of gluing of spin bordisms later on, we will need spin structures on a manifold that extend a fixed trivialisation of the frame bundle at a fixed point. We will always choose this fixed trivialisation to lie over a $0$-cell. On a 1-skeleton these spin structures are then homotopy classes of framings relative to a fixed frame on the distinguished $0$-cell given by the trivialisation of the frame bundle there. The point of looking at these structures is the following: Let $M$ and $M'$ be two manifolds with $\Sigma = \partial_+ M = \partial_- M'$, together with a framed marked point $\{p_i\}$ in each boundary component. Then there exists a canonical spin structure on $M\cup_\Sigma M'$. To see this we use the characterisation of a spin structure on $M$ (or $M'$) as a class in $H^1(Fr_M;\Zb_2)$ that restricts to 1 on the fibres $H^1(SO(n);\Zb_2)\cong\Zb_2$. A spin structure relative to a fixed trivialisation at a point is a class in the relative cohomology $H^1(Fr_M,\{p_i\};\Zb_2)$. From the long exact sequence of the gluing we find:
    \begin{align*}
        0 = H^0(&Fr_M\big|_\Sigma,\{p_i\};\Zb_2) \to H^1(Fr_M\cup_{Fr_M|_\Sigma}Fr'_M,\{p_i\};\Zb_2)\\
        &\to H^1(Fr_M,\{p_i\};\Zb_2)\oplus H^1(Fr'_M,\{p_i\};\Zb_2) \to H^1(Fr_M\big|_\Sigma,\{p_i\};\Zb_2) \to \dots
    \end{align*}
    Assuming the spin structures $s\in H^1(Fr_M,\{p_i\};\Zb_2)$ and $s'\in H^1(Fr'_M,\{p_i\};\Zb_2)$ agree on the boundary, $(s,s')$ is sent to the zero class in $H^1(Fr_M\big|_\Sigma,\{p_i\};\Zb_2)$. They thus come from a unique element of $s\cup_\Sigma s'\in H^1(Fr_M\cup_{Fr_M|_\Sigma}Fr'_M,\{p_i\};\Zb_2)$ which we may take as the glued spin structure by taking the canonical map into $H^1(Fr_M\cup_{Fr_M|_\Sigma}Fr'_M;\Zb_2)$ via the relative long exact sequence.
\end{remark}

\section{Commutative Frobenius Algebras}\label{sec:frobenius_algebras}
The main algebraic input for our gauging construction is that of commutative $\Delta$-separable Frobenius algebra living in a ribbon category $\Cc$. In general we will not make any assumptions as to modularity or semi-simplicity of $\Cc$. The structure of a commutative $\Delta$-separable Frobenius algebra can be seen to naturally arise from trying to read a one-skeleton of a (PL)CW complex as a graph describing the evaluation of an algebraic structure:

Roughly speaking, we need fixed maps from $n$ to $m$ strands at each vertex, where the all strands are arbitrarily divided into incoming and outgoing. This partition should be immaterial, requiring compatible algebra and coalgebra structures. We will further need to choose an ordering of the strands to apply an algebraic operation, and invariance under this ordering leads to (co)commutativity of the (co)algebra structure. The remaining relations arise from manipulations of 1-skeleta that do not change the topology.

We will first state some generalities on Frobenius algebras and their modules and later specialise to the commutative $\Delta$-separable case. To avoid changing the setting too often, let us for simplicity assume from the outset:
\begin{itemize}
    \item[$k$:] a field with $\mathrm{char}(k) \neq 2$,
    \item[$\Cc$:] $k$-linear additive idempotent-complete ribbon category, strict as a monoidal category, with bilinear tensor product, and with absolutely simple tensor unit (i.e.\ $\mathrm{End}_{\Cc}(1) = k \id_1$).
\end{itemize}
For objects $X,Y\in\Cc$ we will denote the braiding by $c_{X,Y}$ and the ribbon twist by $\theta_X$.

\subsection{Algebras and Modules}
    For the graphical notation in ribbon categories we use the conventions of \cite[Sec.\,2.3]{BK01}.

\begin{definition}
    A \emph{(unital associative) algebra} in $\Cc$ is an object $A$ together with morphisms $\mu:A\otimes A\to A$ (multiplication) and $\eta:1\to A$ (unit), such that $\mu\circ(\eta\otimes\id_A) = \mu\circ(\id_A\otimes\eta) = \id_A$ and $\mu\circ(\id_A\otimes\mu)=\mu\circ(\mu\otimes\id_A)$.

    A \emph{coalgebra} is an object $A\in\Obj\Cc$ equipped with maps $\Delta:A\to A\otimes A$ (comultiplication) and $\epsilon:A\to1$ (counit) satisfying dual relations.
\end{definition}

We will often present calculations in terms of string diagrams. We will read these diagrams in composition order, meaning that maps go from the bottom to the top. The structure morphisms of the algebras will be written as:
\[
    \includegraphics[valign=c]{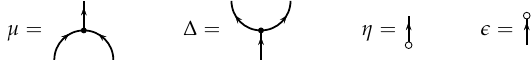}\ .
\]

\begin{definition}
    A \emph{Frobenius algebra} is a quintuple $(A,\mu,\eta,\Delta,\epsilon)$ which defines both an algebra and a coalgebra structure on $A$ which in addition satisfies:
    \[
        \includegraphics[valign=c]{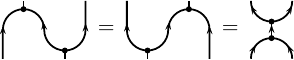}\ .
    \]
    We call a Frobenius algebra
    \begin{itemize}
        \item \emph{commutative} if $\mu\circ c_{A,A} = \mu$,
        \item \emph{$\Delta$-separable} if $\mu\circ\Delta=\id_A$,
        \item \emph{special} if $\epsilon\circ\eta\in k^\times\cdot\id_1$ and $\mu\circ\Delta\in k^\times\cdot\id_A$,
        \item \emph{haploid} if $\dim \hom(1,A) = 1$.
    \end{itemize}
\end{definition}

\begin{definition}
    We have a distinguished Frobenius algebra automorphism $N_A$ called the \emph{Nakayama automorphism} which we will graphically display as a box. Its defining relation is
    \[
        \includegraphics[valign=c]{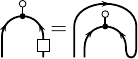}\ .
    \]
    A Frobenius algebra with trivial Nakayama $N_A=\id_A$ is called \emph{symmetric}.
\end{definition}

When we want to display a power of the Nakayama, we will write the exponent into the box.

We can give an explicit formula for the Nakayama automorphism, its inverse and its dual:
\begin{equation}\label{eq:nakayama_formula}
    \includegraphics[valign=c]{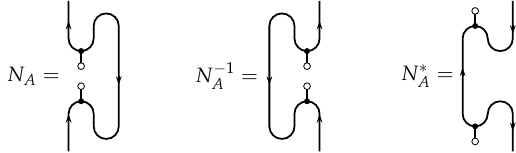}\ .
\end{equation}
See e.g.~\cite{FS08} for details (we use the conventions in \cite{RSW23}).

\begin{definition}
    A \emph{(left) module} over an algebra $A$ in $\Cc$ is an object $M$ together with a map $\rho_M:A\otimes M\to M$ satisfying:
    \begin{itemize}
        \item $\rho_M\circ(\eta\otimes\id_M) = \id_M$.
        \item $\rho_M\circ(\id_A\otimes\rho_M) = \rho_M\circ(\mu\otimes\id_M)$.
    \end{itemize}
    A \emph{morphism of modules} is a map $f:M\to M$ such that $f\circ \rho_M = \rho_M\circ(\id_A\otimes f)$.

    We denote the category of modules over $A$ in $\Cc$ by $\Cc_A$.
\end{definition}
Right modules and bimodules over $A$ are defined analogously.

Let $A$ be a $\Delta$-separable Frobenius algebra in $\Cc$. Then the left modules over $A$ form a monoidal category themselves with the monoidal structure defined by
\begin{equation}\label{eq:module_projector}
    M_1\otimes_A M_2 = \im\includegraphics[valign=c]{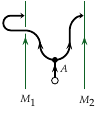}\ .
\end{equation}
Here the action morphism $\rho_M$ is denoted by an arrow head pointing at the module it acts on. Note that the image exists as the above morphism is an idempotent, and by assumption idempotents split in $\Cc$.

\begin{remark}
    In general we have the choice taking the over- or underbraiding in the definition of the projector for tensor product. The choice is ultimately of no importance but must be kept consistent. Throughout this paper we will always take the convention of braiding the algebra over as in \eqref{eq:module_projector}.
\end{remark}

\medskip

The $A$ action on the product is given by
\[
    \includegraphics[valign=c]{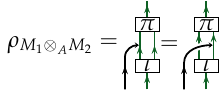}\ .
\]
where $\iota$ denotes the inclusion of $M_1\otimes_A M_1\hookrightarrow M_1\otimes M_2$ in $\Cc$ and $\pi$ the corresponding projector arising from splitting of the idempotent defining $-\otimes_A-$.

\begin{definition}
    Let $A$ be a commutative algebra. We call a module $M\in\Cc_A$ \emph{local} if it satisfies
    \[
        \includegraphics[valign=c]{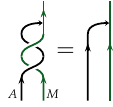}\ .
    \]
    We denote the full subcategory of $\Cc_A$ of local modules by $\Cc_A^\textsf{loc}$.
\end{definition}

    For a special commutative Frobenius algebra, $\Cc_A^\textsf{loc}$ inherits the braiding from $\Cc$, while $\Cc_A$ is in general not braided. The general theory of modules over algebras in a monoidal category is developed e.g.\ in~\cite{KO02}. More details relevant to the later parts of this paper can be found in~\cite{FFRS06}. In Definition~\ref{def:Nakayama-local} we will introduce a slight generalisation of the concept of local modules.

\subsection{Commutative Frobenius Algebras in Ribbon Categories}\label{sec:comm-FA}
Recall that the twist and braiding in a ribbon category are related by
\[
    \theta_{X \otimes Y} = c_{Y,X} \circ c_{X,Y} \circ (\theta_X \otimes \theta_Y) \ .
\]

\begin{proposition}\label{prop:twist_is_nakayama+trivialisation+cocomm}
Let $A$ be a commutative Frobenius algebra $A$ in $\Cc$. Then:
\begin{enumerate}
    \item $N_A=\theta_A$,
    \item $N_A^2 = \id_A$.
    \item $A$ is cocommutative.
\end{enumerate}
\end{proposition}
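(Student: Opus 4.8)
The plan is to work entirely with string diagrams in the ribbon category $\Cc$, using the explicit formula \eqref{eq:nakayama_formula} for $N_A$ together with commutativity $\mu \circ c_{A,A} = \mu$ and the Frobenius relation. For part (1), I would start from the graphical expression for $N_A$ in \eqref{eq:nakayama_formula}: it is built from $\eta$, $\Delta$, a braiding, and $\mu$, $\epsilon$ (a ``twist-like'' zig-zag through a self-braiding). The idea is that commutativity lets me replace the braided crossing by an uncrossed strand at the cost of inserting a twist $\theta_A$ — more precisely, dragging the $A$-strand around the loop produced by $\eta$ and $\Delta$ and using $\mu \circ c_{A,A} = \mu$ to collapse the configuration, while the ribbon/balancing axiom $\theta_{X\otimes Y} = c_{Y,X}\circ c_{X,Y}\circ(\theta_X\otimes\theta_Y)$ and naturality of the twist convert the resulting curl into a single $\theta_A$. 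The Frobenius relation and the counit/unit axioms then straighten the remaining diagram to $\id_A$ precomposed with $\theta_A$, giving $N_A = \theta_A$. I would also want to double-check the mirror computation using $N_A^{-1}$ (also given in \eqref{eq:nakayama_formula}) yields $\theta_A^{-1}$ consistently.

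For part (2), once $N_A = \theta_A$ is established, I would use that $\theta$ is multiplicative on products in the sense above: applying the balancing axiom to $A \otimes A$ and using commutativity of $\mu$ one gets $\theta_A \circ \mu = \mu \circ c_{A,A}\circ c_{A,A}\circ (\theta_A \otimes \theta_A) = \mu \circ c_{A,A} \circ c_{A,A} \circ (\theta_A\otimes\theta_A)$. Since $\mu\circ c_{A,A} = \mu$, the double braiding combined with commutativity collapses, and one finds $\theta_A\circ\mu = \mu\circ(\theta_A\otimes\theta_A)$, i.e.\ $\theta_A$ is an algebra morphism through $\mu$ in a way that forces a constraint. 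Alternatively, and perhaps more cleanly: $N_A$ is always a Frobenius automorphism, and there is a standard identity expressing $N_A$ in terms of a ``double-braiding'' of $A$ with itself (the monodromy) composed through the Frobenius structure; commutativity kills the monodromy contribution and leaves $\theta_A^2$ equal to the identity by the same zig-zag that produced $N_A$. Concretely I expect $N_A^2$ to be computable directly from \eqref{eq:nakayama_formula} as a diagram with two self-braidings which, after using commutativity twice, becomes the identity; then $\id_A = N_A^2 = \theta_A^2$ by part (1).

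Part (3) should be the easiest: cocommutativity $c_{A,A}\circ\Delta = \Delta$ is the Frobenius/ribbon dual of commutativity. Using the Frobenius relation one writes $\Delta = (\mu\otimes\id_A)\circ(\id_A\otimes(\text{something built from }\eta,\epsilon,\Delta))$, or more directly uses the pairing $\epsilon\circ\mu$ and copairing $\Delta\circ\eta$ to transport commutativity of $\mu$ to cocommutativity of $\Delta$; the braiding is moved across the nondegenerate pairing, and the ribbon axiom ensures the twists introduced cancel (one may need parts (1)–(2) here, since a naive dualisation of commutativity produces a diagram involving $N_A$, which we now know equals $\theta_A$ with $\theta_A^2 = \id_A$, and the twist can be absorbed). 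So cocommutativity follows from commutativity together with the first two parts.

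The main obstacle I anticipate is part (1): carefully tracking the crossings and curls in the graphical manipulation so that exactly one factor of $\theta_A$ (and not $\theta_A^{\pm 2}$ or an extra braiding) survives. This is the kind of computation where the ribbon balancing axiom and naturality of $\theta$ must be applied in precisely the right order, and where the chosen convention for $N_A$ in \eqref{eq:nakayama_formula} (following \cite{RSW23}) versus the convention for the braiding in \cite{BK01} has to be reconciled; a sign or an inverse twist error here would propagate. Everything after that is bookkeeping: part (2) is then almost immediate from the ribbon property of $\theta$ on $A\otimes A$ plus commutativity, and part (3) is the formal dual statement.
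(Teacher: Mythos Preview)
Your overall strategy for parts (1) and (3) matches the paper's: a direct string-diagram computation starting from the explicit formula for $N_A$, using commutativity to remove crossings, and for (3) feeding parts (1)--(2) back in to relate $\Delta$ to $c_{A,A}\circ\Delta$.

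Part (2) is where you diverge, and your first attempt has a gap. You argue that $\theta_A\circ\mu = \mu\circ c_{A,A}\circ c_{A,A}\circ(\theta_A\otimes\theta_A) = \mu\circ(\theta_A\otimes\theta_A)$ using commutativity to collapse the double braiding. This is correct, but it only shows that $\theta_A$ is an algebra map; it does \emph{not} force $\theta_A^2=\id_A$. (Indeed, $\theta_A\circ\mu = \mu\circ(\theta_A\otimes\theta_A)$ holds for any commutative algebra in a ribbon category and places no constraint on the order of $\theta_A$.) Your fallback of computing $N_A^2$ directly from the explicit formula could be made to work but is laborious.

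The paper's argument is much cleaner and worth knowing: simply rerun the graphical computation from part (1), but this time pull the strand \emph{under} rather than over. Since commutativity $\mu\circ c_{A,A}=\mu$ equally gives $\mu\circ c_{A,A}^{-1}=\mu$, the mirror calculation goes through verbatim and yields $N_A=\theta_A^{-1}$. Combining with part (1) gives $\theta_A=\theta_A^{-1}$, i.e.\ $N_A^2=\id_A$, in one line.
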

\begin{proof}
    1) Starting from the defining relation of the Nakayama automorphism, we compute
    \[
        \includegraphics[valign=c]{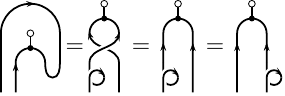}\ .
    \]

    2) By repeating the above calculation pulling the upper strand under (rather than over) the diagram we find that also $N_A=\theta_A^{-1}=N_A^{-1}$.

    3) We calculate
    \[
        \includegraphics[valign=c]{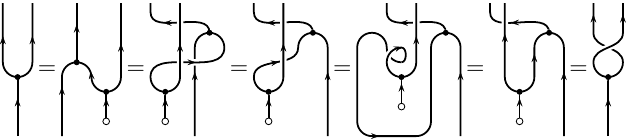}\ ,
    \]
    where we inserted the formula for the inverse Nakayama from \eqref{eq:nakayama_formula} in place of the twist for the second to last equality, which we are allowed to do by 1) and 2) above.
\end{proof}

\begin{remark}
    The relation $\theta_A^2=\id_A$ mirrors the fact that on a given framed line there are two homotopy classes (with fixed endpoints) of the framing. For the case of gauging a bosonic line defect one demands that $A$ be symmetric, i.e.\ that $\theta_A=\id_A$, meaning that we cannot detect different framings of lines (and therefore spin structures). This is in line with the description of an orientation as a trivialisation of the tangent bundle on the 0-skeleton, which can be extended over the 1-skeleton (how exactly being unimportant).
\end{remark}

\medskip

A simple (but useful) relation we will refer to frequently is
\begin{equation}\label{eq:ribbon_direction_change}
    \includegraphics[valign=c]{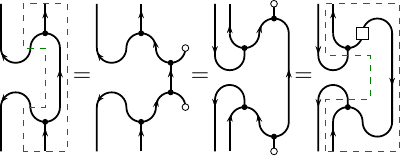}\ .
\end{equation}
While we write it as an equation of string diagrams here, we will most frequently use it to replace part of a diagram of the shape of one of the dashed green boxes by the other box.

In the following we want to fix some notation. For each direct summand $U$ of $A$ we have a restriction $\pi_U: A\to U$ and an inclusion $\iota_U:U\to A$. These induce maps $\mu_{UV}^W:=\pi_W\circ\mu\circ(\iota_U\otimes\iota_V)$ (and $\Delta_U^{VW}$) for direct summands $U,V,W$. In addition we have idempotents $p_U:=\iota_U\circ\pi_U$ projecting onto a component of $A$.

\begin{lemma}\label{lem:F_square_is_1}
    Let $A=1\oplus F$ be a commutative special Frobenius algebra in $\Cc$. Assume additionally that $F \neq 0$ and $\theta_F=-1$. Then $F\otimes F\simeq 1$.
\end{lemma}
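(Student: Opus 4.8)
The plan is to pin down the restriction of the multiplication of $A$ to the summand $F\otimes F$ and to read the isomorphism $F\otimes F\simeq 1$ off from it. Write $\iota_1,\pi_1,\iota_F,\pi_F$ for the inclusions and projections of $A=1\oplus F$. The first step is a normalisation using naturality of the ribbon twist. Since $\theta_1=\id_1$, naturality gives $\epsilon\circ\theta_A=\epsilon$ and $\theta_A\circ\eta=\eta$; moreover $\theta_A$ commutes with $p_1,p_F$, is the identity on the summand $1$, and satisfies $\theta_A\circ\iota_F=\iota_F\circ\theta_F=-\iota_F$ by hypothesis. As $\mathrm{char}(k)\neq 2$, the relation $\epsilon=\epsilon\circ\theta_A$ forces $\epsilon\circ\iota_F=0$, and $\theta_A\circ\eta=\eta$ shows that $\eta$ (a split mono, since $A$ is special) factors isomorphically onto the summand $1$; so I may take $\iota_1=\eta$ and write $\epsilon\circ\eta=\beta\,\id_1$ with $\beta\in k^\times$. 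Then the (co)unit axioms together with $\epsilon\circ\iota_F=0$ pin every component of $\mu$ and $\Delta$ to be $0$ or a rescaled identity, except for $m_0:=\pi_1\mu(\iota_F\otimes\iota_F)$, $m_1:=\pi_F\mu(\iota_F\otimes\iota_F)$, $d_0:=(\pi_F\otimes\pi_F)\Delta\eta$ and $d_1:=(\pi_F\otimes\pi_F)\Delta\iota_F$.

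Next I would show $m_1=0$ and $d_1=0$, so that $\mu$ sends $F\otimes F$ into the unit summand via $m_0$. For $m_1$: naturality of the twist together with commutativity gives $\theta_A\circ\mu=\mu\circ\theta_{A\otimes A}=\mu\circ(\theta_A\otimes\theta_A)$ (the double braiding is absorbed since $\mu\circ c_{A,A}=\mu$); precomposing with $\iota_F\otimes\iota_F$, the two sign changes on the right cancel while the $m_1$-term on the left changes sign, giving $2\,\iota_F m_1=0$, hence $m_1=0$. For $d_1$ (and the self-duality of $F$): precompose the Frobenius relation $\Delta\circ\mu=(\mu\otimes\id_A)\circ(\id_A\otimes\Delta)$ with $\iota_F\otimes\eta$; comparing summands in $A\otimes A$, the $F\otimes F$-component reads $d_1=0$, and the $1\otimes F$-component is the zig-zag $(m_0\otimes\id_F)\circ(\id_F\otimes d_0)=\beta^{-1}\id_F$ (the mirror zig-zag following by symmetry), so $(m_0,\beta\,d_0)$ already exhibit $F$ as self-dual.

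Finally I would upgrade self-duality to $F\otimes F\simeq 1$ using the remaining part of specialness, $\mu\circ\Delta=\xi\,\id_A$ with $\xi\in k^\times$. With $d_1=0$, evaluating $\mu\circ\Delta$ on $\iota_F$ forces $\xi=2\beta^{-1}$ — the one step where $F\neq 0$ is genuinely needed, to cancel $\id_F$ — and evaluating on $\eta$ gives $m_0\circ d_0=(\xi-\beta^{-1})\id_1=\beta^{-1}\id_1$. Precomposing $\Delta\circ\mu=(\mu\otimes\id_A)\circ(\id_A\otimes\Delta)$ instead with $\iota_F\otimes\iota_F$ and using $m_1=d_1=0$, the $F\otimes F$-component reads $(\iota_F\otimes\iota_F)\circ d_0\circ m_0=\beta^{-1}(\iota_F\otimes\iota_F)$; since $\iota_F\otimes\iota_F$ is a split monomorphism this gives $d_0\circ m_0=\beta^{-1}\id_{F\otimes F}$. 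Together with $m_0\circ d_0=\beta^{-1}\id_1$ and $\beta\in k^\times$, the map $m_0\colon F\otimes F\to 1$ is an isomorphism with inverse $\beta\,d_0$, so $F\otimes F\simeq 1$.

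I do not expect any individual identity to be hard; the main obstacle is the bookkeeping — ensuring in the first step that the algebra unit really is the inclusion of the $1$-summand (so the component analysis of $\mu$ and $\Delta$ stays clean), and in the last step being careful to precompose the Frobenius relation with two different maps ($\iota_F\otimes\eta$ to obtain the zig-zags and hence self-duality, and $\iota_F\otimes\iota_F$ to obtain $d_0\circ m_0$ and hence genuine invertibility). Everything after the normalisation is a short diagram chase.
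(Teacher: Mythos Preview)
Your proof is correct and follows essentially the same overall shape as the paper's --- first kill $\mu_{FF}^F$ via the twist and commutativity, then show that $m_0=\mu_{FF}^1$ and $d_0=\Delta_1^{FF}$ are mutually inverse up to scalars --- but the two diverge in how that last step is executed. You carry out a full component bookkeeping: you also kill $d_1=\Delta_F^{FF}$, then feed the specialness relation $\mu\circ\Delta=\xi\,\id_A$ through $\eta$ and $\iota_F$ to pin down $\xi=2\beta^{-1}$ and $m_0 d_0=\beta^{-1}\id_1$, and a second pass through the Frobenius relation gives $d_0 m_0=\beta^{-1}\id_{F\otimes F}$. The paper instead never computes $d_1$ or invokes $\mu\circ\Delta\propto\id_A$: it studies the single composite $e=\Delta_1^{FF}\circ\mu_{FF}^1$, rewrites it via the Frobenius relation, and uses only that $\hom(1,A)$ is one-dimensional (so $\eta\circ\epsilon$ is a scalar multiple of $p_1$) to conclude $e\propto\id_{F\otimes F}$; the reverse composite $m_0 d_0$ is then shown to be nonzero not by computing it but by appealing to non-degeneracy of the Frobenius pairing $\epsilon\circ\mu$. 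Your route is more hands-on and makes every constant explicit (in particular you see exactly where $F\neq 0$ and $\mathrm{char}(k)\neq 2$ enter), at the cost of more bookkeeping; the paper's route is shorter and uses one fewer hypothesis from ``special'', but leans on the ambient assumption that $\mathrm{End}(1)=k$.
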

\begin{proof}
    First note that $\mu_{FF}^F=-\mu_{FF}^F=0$, as can be seen by inserting a twist before and after $\mu_{FF}^F$ and applying commutativity. The same reasoning gives $\mu_{1F}^1=0=\mu_{F1}^1$.

    Now consider the map $e:=\Delta_A^{FF}\circ\mu_{FF}^A:F\otimes F\to F\otimes F$. By the Frobenius relation it is given by $(\mu_{FA}^F\otimes\id_F)\circ(\id_F\otimes\Delta^{AF}_F)$. Since $\mu_{FF}^F=0$ we see that $e = \Delta_1^{FF}\circ\mu_{FF}^1$, and that we may insert a projector $p_1$ on $A$ between $\mu_{FA}^F$ and $\Delta^{AF}_F$.

    The Hom-spaces $1 \to A$ and $A \to 1$ are one-dimensional, since every morphism $f : 1 \to F$ is zero (naturality of the twist gives $f=-f$), and since by assumption on $\Cc$ we have $\mathrm{End}_{\Cc}(1) = k \id_1$. Thus $\eta = c \iota_1$ and $\epsilon = c' \pi_1$ for some $c,c' \in k^\times$. In particular,  $\eta\circ\epsilon = cc' p_1 : A\to A$. Using this, we get $e= (cc')^{-1} \cdot (\mu_{FA}^F\otimes\id_F)\circ(\id_F\otimes(\eta\circ\epsilon)\otimes\id_F)\circ(\id_F\otimes\Delta^{AF}_F)=(cc')^{-1}\cdot \id_{F\otimes F}$.

    We can also conclude that $\mu_{FF}^1\circ\Delta_1^{FF}$ is non-zero: if it were zero then we would find $\mu_{FF}^1\circ e = \mu_{FF}^1\circ\Delta_1^{FF}\circ\mu_{FF}^1=0$. But since $e$ is known to be $(cc')^{-1}\cdot\id_{F\otimes F}$ we would find $\mu_{FF}^1=0$ which would contradict the non-degeneracy of the pairing $\epsilon\circ\mu$.

    Altogether we have shown that $\mu_{FF}^1\circ\Delta_1^{FF}$ is a non-zero multiple of $\id_1$, and that $\Delta_1^{FF}\circ\mu_{FF}^1$ is a non-zero multiple of $\id_{F \otimes F}$. Hence $F\otimes F\simeq1$.
\end{proof}

\begin{remark}
    In many common settings the above also implies that $F$ is simple.
    It suffices, e.g., that $\Cc$ is a finite tensor category and
    $k$ is algebraically closed of characteristic zero,
    in which case $F$ is simple as any invertible object must have Frobenius-Perron dimension 1 which precludes it from having a non-trivial subobject.
\end{remark}

\begin{theorem}\label{thm:simple_commutative_frobenius_algebras}
    Let $A$ be a haploid commutative $\Delta$-separable special Frobenius algebra in $\Cc$. Then:
    \begin{enumerate}
        \item If $A$ is symmetric then $\dim A = \epsilon\circ\eta \neq 0$.
        \item If $A$ is not symmetric then $\dim A = 0$.
    \end{enumerate}
    When $A$ is not symmetric we further have that $A$ splits as $A=B\oplus F$, where $B$ is a haploid symmetric commutative special Frobenius algebra and $F$ is a local module over $B$ with twist $-1$ which satisfies $F\otimes_B F \cong B$.
\end{theorem}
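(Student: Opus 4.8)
The plan is to organise everything around the decomposition of $A$ into the eigenspaces of the Nakayama automorphism, reading off the splitting statement along the way. By Proposition~\ref{prop:twist_is_nakayama+trivialisation+cocomm} we have $N_A=\theta_A$ and $N_A^2=\id_A$, so since $\mathrm{char}(k)\neq2$ the morphisms $p_{\pm}:=\tfrac12(\id_A\pm N_A)$ are complementary idempotents; as $\Cc$ is idempotent-complete they split, giving $A\cong B\oplus F$ with $B:=\im p_{+}$ and $F:=\im p_{-}$. Naturality of the twist (equivalently of $N_A$) forces $\theta_B=\id_B$ and $\theta_F=-\id_F$, so $A$ is symmetric precisely when $F=0$. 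Since $N_A$ is simultaneously an algebra and a coalgebra automorphism that fixes $\eta$ and satisfies $\epsilon\circ N_A=\epsilon$, the resulting $\Zb_2$-grading is compatible with all structure maps: $B$ is a subalgebra, $\mu$ maps $B\otimes B$ and $F\otimes F$ into $B$ and maps $B\otimes F$, $F\otimes B$ into $F$, $\Delta$ maps $B$ into $B\otimes B\oplus F\otimes F$ and $F$ into $B\otimes F\oplus F\otimes B$, while $\eta$ factors through $B$ and $\epsilon$ vanishes on $F$; haploidness of $A$ moreover gives $\hom(1,B)=\hom(1,A)=k\eta$ and $\hom(1,F)=0$.

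For statement~1 I would use the standard expression for the quantum dimension of a $\Delta$-separable special Frobenius algebra in a ribbon category, obtained by running the categorical trace of $\id_A$ through the Frobenius self-duality: after using commutativity to absorb a braiding it reads $\dim A=\epsilon\circ\mu\circ(\theta_A\otimes\id_A)\circ\Delta\circ\eta$ (the precise placement of twist and braiding is a matter of conventions, but this is the only place the ribbon structure enters). If $A$ is symmetric then $\theta_A=N_A=\id_A$ and $\Delta$-separability collapse the right-hand side to $\epsilon\circ\eta$, which lies in $k^{\times}$ by specialness.

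For the splitting I would equip $B$ with $\eta_B:=\eta$, $\epsilon_B:=\epsilon|_B$, $\mu_B:=\mu|_{B\otimes B}$ and $\Delta_B:=(p_{+}\otimes p_{+})\circ\Delta|_B$. The (co)algebra axioms are immediate, and the Frobenius relation for $B$ follows from that for $A$ together with the bookkeeping that the ``wrong-grade'' $F\otimes F$-pieces occurring in $\Delta\circ\mu|_{B\otimes B}$ are consistently annihilated by $p_{+}\otimes p_{+}$; commutativity is inherited and $B$ is haploid by the $\hom$-computation above. For specialness note that $\mathrm{End}_{\Cc_B}(B)\cong\hom(1,B)=k$, so $\mu_B\circ\Delta_B$ is a scalar; rescaling $(\Delta_B,\epsilon_B)$ if necessary one arranges it to be $\id_B$, with $\epsilon_B\circ\eta_B=\epsilon\circ\eta\in k^{\times}$ inherited from specialness of $A$ (that this scalar is nonzero to begin with is part of the work). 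Since $N_B=\theta_B=\id_B$ (the Proposition applied to $B$), $B$ is a haploid symmetric commutative special Frobenius algebra, hence $\dim B=\epsilon_B\circ\eta_B\neq0$ by statement~1. Next, $F$ is a $B$-module via $\mu|_{B\otimes F}$ and it is local: the monodromy contributes the factor $\theta_B^{-1}\otimes\theta_F^{-1}$ while $\theta_F$ acts by $-1$ on the target, the two signs cancelling against the action. The restricted multiplication $m:=\mu|_{F\otimes F}\colon F\otimes F\to B$ is $B$-balanced and hence descends to $\bar m\colon F\otimes_B F\to B$, and the $F\otimes F$-component of $\Delta|_B$ descends to $\bar n\colon B\to F\otimes_B F$. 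The key point is that $\bar n\circ\bar m$ and $\bar m\circ\bar n$ are nonzero scalars, so that $F\otimes_B F\cong B$; this is the relativisation over $B$ of Lemma~\ref{lem:F_square_is_1}. Its input is that the ``degree-three'' multiplication $F\otimes F\to F$ vanishes — by the twist-and-commutativity argument of that lemma ($\mu_{FF}^{F}=-\mu_{FF}^{F}$ using $\theta_F=-\id_F$ and $\mathrm{char}(k)\neq2$; here this is automatic from the grading) — which via the Frobenius relation collapses $\bar n\circ\bar m$ to ``coact-then-act'' along the unit isomorphisms $B\otimes_B F\cong F\cong F\otimes_B B$, hence to a scalar, and non-degeneracy of the Frobenius pairing on the block $F\otimes F$ makes that scalar nonzero. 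Finally, viewing $A=B\oplus F$ as a commutative algebra in $\Cc_B^{\textsf{loc}}$ and precomposing $\bar m$ with the self-braiding of $F$ shows that this self-braiding equals $\id_{F\otimes_B F}$, completing the splitting claim.

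For statement~2 one uses that the forgetful functor $\Cc_B^{\textsf{loc}}\to\Cc$ for the special symmetric commutative Frobenius algebra $B$ (with $\dim B\neq0$) multiplies quantum dimensions by $\dim B$, i.e.\ $\dim M=\dim(B)\cdot\dim_{\Cc_B^{\textsf{loc}}}(M)$ for $M\in\Cc_B^{\textsf{loc}}$. Now $F\in\Cc_B^{\textsf{loc}}$ is invertible of order two, has trivial self-braiding and twist $-\id_F$, so $\dim_{\Cc_B^{\textsf{loc}}}(F)^2=\dim_{\Cc_B^{\textsf{loc}}}(F\otimes_B F)=\dim_{\Cc_B^{\textsf{loc}}}(B)=1$, and since for an invertible object of order two the quantum dimension is the product of the twist and self-braiding scalars this forces $\dim_{\Cc_B^{\textsf{loc}}}(F)=-1$; hence $\dim F=-\dim B$ and $\dim A=\dim B+\dim F=0$. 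The step I expect to cost the most work is the isomorphism $F\otimes_B F\cong B$ together with the verification that the projected data genuinely constitute a \emph{special} Frobenius algebra on $B$: these are the real diagrammatic computations and are exactly where ``commutative'', ``$\Delta$-separable'' and ``$\theta_F=-\id_F$'' enter essentially, whereas the idempotent splitting, the grading bookkeeping, the behaviour of quantum dimension under $\Cc_B^{\textsf{loc}}\to\Cc$, and the dimension of an invertible twist-$(-1)$ object are formal.
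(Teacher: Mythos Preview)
Your overall architecture matches the paper's: split $A=B\oplus F$ via the Nakayama-eigenspace idempotents, show $B$ is a haploid symmetric commutative special Frobenius algebra, pass to $\Cc_B^{\textsf{loc}}$, and invoke Lemma~\ref{lem:F_square_is_1} there to obtain $F\otimes_B F\cong B$. Your derivation of statement~2 via $\dim_{\Cc_B^{\textsf{loc}}}F=-1$ (from invertibility, trivial self-braiding, and $\theta_F=-1$) together with the dimension relation under the forgetful functor is a clean variant of what the paper does by direct computation of $\dim B$ and $\dim F$ separately.

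The genuine gap is the step you yourself flag as ``the work'': that $\mu_B\circ\Delta_B$ is a \emph{nonzero} scalar, so that $B$ is actually special. You never prove this, and your subsequent appeal to statement~1 to get $\dim B=\epsilon_B\circ\eta_B\neq0$ already presupposes $\Delta$-separability of $B$, so it cannot be used to establish the missing nonvanishing. Without specialness of $B$, the passage to $\Cc_B^{\textsf{loc}}$, the dimension relation $\dim F=\dim B\cdot\dim_{\Cc_B^{\textsf{loc}}}F$, and Lemma~\ref{lem:F_square_is_1} are all unavailable, and the argument collapses. The paper closes this gap by a concrete diagram computation: applying~\eqref{eq:ribbon_direction_change} to $\epsilon\circ\mu\circ\Delta\circ\eta$ gives $\dim B-\dim F=\epsilon\circ\eta\neq0$, so at least one of $\dim B,\dim F$ is nonzero; in the case $\dim F\neq0$ a ``bulb'' identity (closing an $F$-loop attached to a $B$-loop via haploidness, see~\eqref{eq:bulb_to_dimension}) yields $\dim F=2\,\dim B\,\dim F/(\epsilon\circ\eta)$, hence $\dim B=\tfrac12(\epsilon\circ\eta)\neq0$. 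The same bulb computation with all legs coloured $B$ then gives $\mu^B_{BB}\circ\Delta^{BB}_B=\tfrac{\dim B}{\epsilon\circ\eta}\,\id_B\neq0$, so $B$ is special. Only \emph{after} this does the paper pass to $\Cc_B^{\textsf{loc}}$ and invoke the lemma; the case $\dim F=0$ is ruled out at the very end via $(\dim_{\Cc_B^{\textsf{loc}}}F)^2=1$. This order of operations --- securing $\dim B\neq0$ by a direct trace manipulation \emph{before} any module-category machinery --- is exactly the ingredient your sketch is missing.
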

\begin{proof}
    The first part follows immediately from \eqref{eq:ribbon_direction_change}. Below we therefore assume that $A$ is not symmetric.

    We define maps $p_\pm=\frac{1}{2}(\id_A\pm \theta_A)$ which are idempotents due to Proposition~\ref{prop:twist_is_nakayama+trivialisation+cocomm}. By idempotent completeness of $\Cc$ we can set $B:=\im p_+$ and $F:=\im p_-$. By $p_+ + p_- = \id_A$ we have $B\oplus F=A$, and since $A$ is not symmetric, $p_- \neq 0$ and hence $F \neq 0$. By $\theta_A \circ p_\pm =\pm p_\pm$ we find that the $B$ has trivial twist and $F$ has twist $-1$.

    We can then decompose $\mu$ into the parts $\mu_{IJ}^K$ with $I,J,K\in\{B,F\}$. We find that only $\mu_{BB}^B, \mu_{FF}^B, \mu_{BF}^F, \mu_{FB}^F$ may be non-trivial due to the commutativity of $A$ and the properties of the twist.
    The part $\mu_{BB}^B$ makes $B$ a haploid symmetric commutative Frobenius algebra, and the maps $\mu_{BF}^F$ and $\mu_{FB}^F$ make $F$ a $B$-$B$-bimodule and also a local $B$-module.

    By applying \eqref{eq:ribbon_direction_change} and $\Delta$-separability to $\epsilon\circ\mu\circ\Delta\circ\eta$ one finds
    \begin{equation}\label{eq:dimB-dimF}
        \dim B-\dim F = \epsilon\circ\eta\neq 0 \ .
    \end{equation}
    and we have that at least one of $\dim B$ and $\dim F$ must be non-zero.

    Suppose $\dim F \neq 0$: We use $\Delta$-separability and commutativity of $A$ to find that $\dim F = \mathrm{tr}(\mu_{BF}^F\circ\Delta_F^{BF} + \mu_{FB}^F\circ\Delta_F^{FB}) = 2\cdot\mathrm{tr}(\mu_{BF}^F\circ\Delta_F^{BF})$. We can deform this using \eqref{eq:ribbon_direction_change} and the Frobenius property into a $B$ loop attached to an $F$ loop via a (co)multiplication:
    \begin{equation}\label{eq:bulb_to_dimension}
        \includegraphics[valign=c]{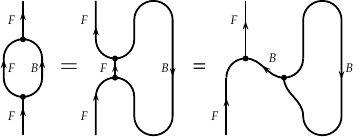}\ .
    \end{equation}
    As $B$ is haploid the $B$ loop on the right hand side evaluates to $\frac{\dim B}{\epsilon\circ\eta} \cdot \eta$. Overall, this results in the equality $\dim F = 2\frac{\dim B \dim F}{\epsilon\circ\eta}$. Dividing by $\dim F$ gives $\dim B = \frac{1}{2}\epsilon\circ\eta\neq0$. Substituting into \eqref{eq:dimB-dimF} shows that $\dim F = - \frac{1}{2}\epsilon\circ\eta$, and so $\dim A=0$.

    Repeating the computation in \eqref{eq:bulb_to_dimension} with all legs coloured by $B$ gives
    \begin{equation*}
        \mu^B_{BB} \circ \Delta^{BB}_B = \frac{\dim B}{\epsilon\circ\eta} \cdot \id_B \ .
    \end{equation*}
    As $\dim B \neq 0$ (and anyway $\epsilon \circ \eta \neq 0$), $B$ is special.

    Suppose $\dim F = 0$: We already saw that in this case $\dim B \neq 0$, and by the above identity, $B$ is special.

    Therefore, in either case $B$ is a haploid commutative special Frobenius algebra in $\Cc$. We now pass to the category $\Cc_B^\mathrm{loc}$ of local $B$-modules in $\Cc$. Since the relative tensor product $\otimes_B$ is described by images of idempotents and $\Cc$ is idempotent-split by assumption, $\Cc_B^\mathrm{loc}$ is again monoidal. The tensor unit $B$ in $\Cc_B^\mathrm{loc}$ satisfies $\mathrm{Hom}_B(B,B) = k \id_B$ as $B$ is haploid, see \cite[Lem.\,4.5]{FS03}. Furthermore, $\Cc_B^\mathrm{loc}$ is a ribbon category \cite[Thm.\,1.17]{KO02}. Thus $A = B \oplus F$ satisfies the conditions of Lemma~\ref{lem:F_square_is_1} in the category $\Cc_B^\mathrm{loc}$. We conclude that $F \otimes_B F \cong B$.

    As dimensions are multiplicative, it follows that $(\dim_B F)^2 = \dim_B B = 1$, where $\dim_B$ denotes the dimension in $\Cc_B^\mathrm{loc}$. If we rescale the coproduct and counit such that $B$ is $\Delta$-separable, by the same computation as in \eqref{eq:bulb_to_dimension}, $\dim_B F = \dim F/\epsilon \circ \eta$. In particular, the case $\dim F = 0$ cannot actually happen.

    This completes the proof.
\end{proof}

\begin{corollary}\label{cor:even_loop_vanishes}
    Let $A$ be a non-symmetric haploid commutative $\Delta$-separable special Frobenius algebra in $\Cc$. Then $\mu\circ(N_A \otimes \id_A)\circ\Delta=0$.
\end{corollary}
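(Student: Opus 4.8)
The plan is to exploit the splitting $A=B\oplus F$ provided by Theorem~\ref{thm:simple_commutative_frobenius_algebras}, in which $\theta_A$ acts as $+\id$ on $B$ and as $-\id$ on $F$. Since $N_A=\theta_A$ by Proposition~\ref{prop:twist_is_nakayama+trivialisation+cocomm}, the associated idempotents are $p_B=\tfrac12(\id_A+\theta_A)$ and $p_F=\tfrac12(\id_A-\theta_A)$, and $N_A=p_B-p_F$. So the task is to evaluate $\mu\circ((p_B-p_F)\otimes\id_A)\circ\Delta$ and show it vanishes.

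The first move is to symmetrise. Using cocommutativity (Proposition~\ref{prop:twist_is_nakayama+trivialisation+cocomm}(3)) together with naturality of the braiding and commutativity of $A$ one gets $\mu\circ(N_A\otimes\id_A)\circ\Delta=\mu\circ(\id_A\otimes N_A)\circ\Delta$. Adding the two expressions and using $p_B+p_F=\id_A$, $p_B-p_F=N_A$, so that $N_A\otimes\id_A+\id_A\otimes N_A=2(p_B\otimes p_B-p_F\otimes p_F)$, yields
\[
    \mu\circ(N_A\otimes\id_A)\circ\Delta \;=\; \mu\circ(p_B\otimes p_B)\circ\Delta \;-\; \mu\circ(p_F\otimes p_F)\circ\Delta\,.
\]
The point of the symmetrisation is precisely that it kills the mixed terms $\mu\circ(p_B\otimes p_F)\circ\Delta$ and $\mu\circ(p_F\otimes p_B)\circ\Delta$, which would otherwise survive on the $F$-summand.

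Next I would read off the two remaining terms using the list of non-zero (co)multiplication components from the proof of Theorem~\ref{thm:simple_commutative_frobenius_algebras}: only $\mu_{BB}^B,\mu_{FF}^B,\mu_{BF}^F,\mu_{FB}^F$, and dually (by cocommutativity) only $\Delta_B^{BB},\Delta_B^{FF},\Delta_F^{BF},\Delta_F^{FB}$, are non-zero. This forces $\mu\circ(p_B\otimes p_B)\circ\Delta=\iota_B\circ(\mu_{BB}^B\circ\Delta_B^{BB})\circ\pi_B$ and $\mu\circ(p_F\otimes p_F)\circ\Delta=\iota_B\circ(\mu_{FF}^B\circ\Delta_B^{FF})\circ\pi_B$, so the corollary reduces to the identity $\mu_{BB}^B\circ\Delta_B^{BB}=\mu_{FF}^B\circ\Delta_B^{FF}$ of endomorphisms of $B$. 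Now $\Delta$-separability of $A$ gives $\mu_{BB}^B\circ\Delta_B^{BB}+\mu_{FF}^B\circ\Delta_B^{FF}=\id_B$, while the proof of Theorem~\ref{thm:simple_commutative_frobenius_algebras} shows $\mu_{BB}^B\circ\Delta_B^{BB}=\tfrac{\dim B}{\epsilon\circ\eta}\,\id_B$ with $\dim B=\tfrac12(\epsilon\circ\eta)$; hence both terms equal $\tfrac12\id_B$ and their difference vanishes.

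There is no deep obstacle here; the delicate parts are bookkeeping. The step that genuinely matters is establishing $\mu\circ(N_A\otimes\id_A)\circ\Delta=\mu\circ(\id_A\otimes N_A)\circ\Delta$ (this is where cocommutativity enters, and it is what removes the cross-terms), together with the careful tracking of which components survive the decomposition $A=B\oplus F$. The single external input is the value $\dim B=\tfrac12(\epsilon\circ\eta)$, which in turn relies on the non-symmetric case analysis in the proof of Theorem~\ref{thm:simple_commutative_frobenius_algebras} (in particular on the fact shown there that the case $\dim F=0$ does not occur). Alternatively one could argue graphically, deforming the ``$F$-bulb on $B$'' into a ``$B$-bulb on $B$'' as in~\eqref{eq:bulb_to_dimension}, but pinning down the scalar amounts to the same computation.
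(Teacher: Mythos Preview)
Your argument is correct, but it takes a longer route than the paper's. The paper's proof is essentially one line: apply the bulb computation~\eqref{eq:bulb_to_dimension} with \emph{all} strands coloured by $A$ (rather than $B$ and $F$ separately) to obtain directly
\[
    \mu\circ(N_A\otimes\id_A)\circ\Delta \;=\; \frac{\dim A}{\epsilon\circ\eta}\,\id_A \,,
\]
and then invoke $\dim A=0$ from part~(2) of Theorem~\ref{thm:simple_commutative_frobenius_algebras}. Your approach instead decomposes along $A=B\oplus F$, symmetrises via cocommutativity to eliminate the cross terms, and then feeds in the finer information $\mu_{BB}^B\circ\Delta_B^{BB}=\tfrac{\dim B}{\epsilon\circ\eta}\id_B$ together with $\dim B=\tfrac12(\epsilon\circ\eta)$. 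Both proofs ultimately rest on the same graphical identity~\eqref{eq:bulb_to_dimension} and on the dimension analysis in Theorem~\ref{thm:simple_commutative_frobenius_algebras}; the paper just applies that identity once to $A$ instead of unpacking it through the $B$/$F$ splitting. Your version has the minor advantage of making explicit that the map factors through $B$, but at the cost of more bookkeeping and of importing a stronger consequence of the theorem (the exact value of $\dim B$, which in turn needs the exclusion of $\dim F=0$) where the paper only needs the coarser $\dim A=0$.
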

\begin{proof}
    Due to a calculation entirely analogous to \eqref{eq:bulb_to_dimension} in the previous proof, but with the full $A$ on every strand, this evaluates to $\frac{\dim A}{\epsilon\circ\eta}\id_A$. By the previous theorem, this is 0.
\end{proof}

\begin{remark}\label{rem:symmetric_subalgebra_rescaling}
    By defining $\mu_B=\mu_{BB}^B$, $\eta_B=\pi_B\circ\eta$, $\Delta_B = 2\Delta_B^{BB}$ and $\epsilon_B = \frac{1}{2}\epsilon\circ\iota_B$ we can give $B$ the structure of a haploid commutative symmetric $\Delta$-separable special Frobenius algebra. Furthermore, $A = B \oplus F$, with the original normalisation of coproduct and counit, is a haploid commutative $\Delta$-separable special Frobenius algebra in $\Cc_B^\textsf{loc}$.
\end{remark}

\subsection{Nakayama-Local Modules}

We will next define a fitting category of modules for commutative Frobenius algebras in ribbon categories. The first ingredient is a suitable locality condition:
\begin{definition}\label{def:Nakayama-local}
    We call a module over a commutative Frobenius algebra \emph{Nakayama-local} if there exists a $\nu\in\Zb_2$ such that
    \[
        \includegraphics[valign=c]{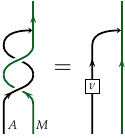}\ .
    \]
    We call the minimal $\nu$ the degree of the module and further call a module even when $\nu=0$ and odd when $\nu=1$. We denote the full subcategory of $\Cc_A$ given by direct sums of Nakayama-local modules by $\Cc_A^\textsf{N-loc}$.
\end{definition}

\begin{remark}\label{rem:nlocal_over_a_implies_local_over_b}
\begin{enumerate}[leftmargin=*]
    \item
        If the algebra is symmetric, then a Nakayama-local module is the same as a local module and the minimal $\nu$ is always 0.

    \item
        In the case that $A$ is haploid $\Delta$-separable special but not symmetric, the twisted action is distinct from the untwisted action on all non-zero modules.
        To see this, consider the morphism $f := \rho_M \circ (\id_A \otimes X) \circ ((\Delta \circ \eta) \otimes \id_M)$, where $X$ stands for the right hand side in the defining equation in Definition~\ref{def:Nakayama-local}. Rewriting $f$ and using Corollary~\ref{cor:even_loop_vanishes} gives
        \[
            f = \rho_M \circ ((\mu\circ(\id_A\otimes N_A^\nu)\circ\Delta\circ\eta)\otimes\id_M) = \begin{cases}0, & \nu=1,\\ \id_M, & \nu=0. \end{cases}
        \]
        Therefore the defining equation for the degree can only be satisfied for either $\nu=0$ or $\nu=1$, but not both.

    \item
        The Nakayama-local modules over $A$ are local modules over the symmetric subalgebra $B$. In fact, by interpreting $A$ as a module over $B$, we find an isomorphism of categories $(\Cc_B^\textsf{loc})_A^\textsf{N-loc}\cong\Cc_A^\textsf{N-loc}$ given by the functor that acts as $((M,\rho_M^B),\rho_M^A)\mapsto (M,\rho_M^A)$ on objects and as the identity on morphisms. An explicit inverse can be given by defining $\rho_M^B:=\rho_M^A\circ\iota_B$.
\end{enumerate}
\end{remark}

From here on we fix:
\begin{itemize}
    \item[$A$:] a commutative $\Delta$-separable special Frobenius algebra in $\Cc$.
\end{itemize}
We now show that $\Cc_A^\textsf{N-loc}$ naturally has the structure of a $\Zb_2$-crossed braided tensor category with respect to the $\Zb_2$-action that twists the action morphism.

Namely, denote by $T$ the automorphism twisting the module action by a Nakayama. The fact that $T$ is monoidal follows from the fact that an insertion of the same number of Nakayamas on both arms of the projector from \eqref{eq:module_projector} does not change the projector. The map $\tilde c_{M,N} := \pi_{T^{\deg M}(N)\otimes_A M} \circ c_{M,N} \circ \iota_{M\otimes_A N}$, where $c_{M,N}$ is the braiding of the underlying objects in $\Cc$, is a twisted braiding. To see that one has to check compatibility with the action. In particular one has to check that moving the action from one strand to the other does not cause a contradiction: When we use $\pi$ to do so we twist the action by $N_A^{\deg M}$, whereas moving the action to the other strand using $\iota$ does not. Therefore calculate
\[
    \includegraphics[valign=c]{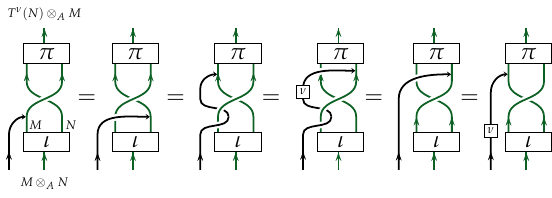}\ ,
\]
where $\nu$ is the degree of $M$. The coherence conditions on $T$ for $\Cc_A^\textsf{N-loc}$ to be $G$-crossed braided given in~\cite[Def.\,8.24.1]{EGNO15} are easy to verify. See loc.\ cit.\ for details on $G$-crossed categories.

Note that $\Cc_A^\textsf{N-loc}$ is -- in general -- not canonically a G-crossed ribbon category. The twist should send an odd module to the corresponding twisted odd module and an even module to itself, i.e. we should have $\theta_M:M\to T^{\deg M}(M)$, but the twist from $\Cc$ will instead do the opposite, that is we have $\theta_M:M\to T^{1+\deg M}(M)$, as can be seen by the ribbon relation:
\[
    \theta_M\circ\rho_M = \rho_M \circ c_{M,A} \circ c_{A,M} \circ (\theta_A\otimes\theta_M) = \rho_M \circ (\theta_A^{1+\deg M}\otimes\theta_M)\,.
\]
To fix this discrepancy we need to choose maps equivariantising the twisted action.

\begin{definition}\label{def:LA-spin}
    We denote the equivariantisation $(\Cc_A^\textsf{N-loc})^{\Zb_2}$ by $\ALineCat$ and call it the category of \emph{spin line defects over $A$}.
\end{definition}

Explicitly the objects of $\ALineCat$ are given by triples $(M,\rho_M,t_M)$, with $(M,\rho_M)$ a left module over $A$ in $\Cc$ and $t_M:M\to M$ the equivariantisation datum for twisting the action morphism by the Nakayama. The equivariance reads as
\begin{equation}\label{eq:module_nakayama_equivariance}
    \includegraphics[valign=c]{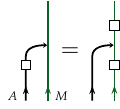}\quad ,
\end{equation}
where we denote an insertion of $t_M$ by a box in the same way we do for the Nakayama. Writing the condition as we did above, $(t_M)^2 = \id_M$ is automatic. We can understand $t_M$ as a pendant to the Nakayama automorphism on the algebra.

We can give the structure maps as
\begin{align*}
    \hat c_{M,N} &= \pi_{N\otimes_A M} \circ (t_N^{\deg M}\otimes \id_M) \circ c_{M,N}\circ\iota_{M\otimes_A N},\\
    \widehat{\mathrm{coev}}_M &= \pi_{M\otimes_A M^*}\circ(\rho_M\otimes\id_{M^*})\circ(\id_A\otimes\mathrm{coev}_M),\\
    \widehat{\mathrm{ev}}_M &= (\id_A\otimes\mathrm{ev}_M\circ(\id_{M^*}\otimes\rho_M)\circ(c_{A,M^*}\otimes\id_M))\circ (\Delta\circ\eta\otimes\iota_{M^*\otimes_A M}).
\end{align*}
In the graphical calculus (of $\Cc$-coloured ribbons) these can be presented as
\[
    \includegraphics[valign=c]{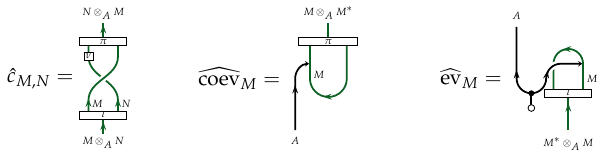}\ ,
\]
where $\nu$ is the degree of $M$. We can additionally define
\[
    \hat\theta_M = t_M^{1+\deg M}\circ \theta_M\,.
\]
The corresponding (co)evaluations of the opposite handedness must then be given by
\[
    \includegraphics[valign=c]{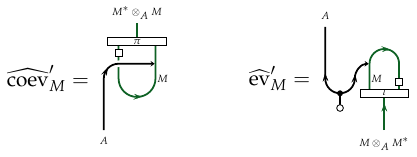}\ .
\]
We see that these come with additional insertions of the equivariantising automorphisms $t_M$.

\begin{proposition}\label{prop:line_defects_are_ribbon}
Via the maps above, $\ALineCat$ is a ribbon category.
\end{proposition}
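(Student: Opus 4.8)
The plan is to exhibit $\ALineCat$ as the $\Zb_2$-equivariantisation of the $\Zb_2$-crossed braided tensor category $\Cc_A^\textsf{N-loc}$ and to check that the explicit structure maps listed above endow it with a braiding, a rigid duality, and a compatible twist. Most of the braided and rigid structure is forced by the general theory of equivariantisations of $G$-crossed braided categories (see \cite[Sec.\,8.24]{EGNO15}): what remains is to verify that the given $\hat c$, $\widehat{\mathrm{coev}}$, $\widehat{\mathrm{ev}}$ and $\hat\theta$ are genuine morphisms in the equivariantisation, i.e.\ compatible with the equivariance data $t$, and that the twist axioms hold. The real novelty is the ``$+1$''-shift in $\hat\theta_M = t_M^{1+\deg M}\circ\theta_M$, which is designed precisely to compensate the observation (made just before Definition~\ref{def:LA-spin}) that the $\Cc$-twist realises $\theta_M$ as a morphism $M\to T^{1+\deg M}(M)$ rather than $M\to T^{\deg M}(M)$.

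First I would treat the braiding and rigidity. For $\hat c_{M,N}$: the $G$-crossed braiding $\tilde c$ of $\Cc_A^\textsf{N-loc}$ was already described, and the extra factor $t_N^{\deg M}$ in $\hat c_{M,N}$ is exactly the equivariance correction that replaces the target $T^{\deg M}(N)\otimes_A M$ of $\tilde c_{M,N}$ by $N\otimes_A M$; with this correction $\hat c$ becomes an isomorphism of equivariant modules, and its naturality and the two hexagons follow from those of $\tilde c$ together with the monoidality of $T$ and the compatibility of $t$ with the braiding of $\Cc$. For duality I would equip $M^*$ (the $\Cc$-dual) with the standard induced left $A$-action, check that it is again Nakayama-local of degree $\deg M^*=\deg M$ using Proposition~\ref{prop:twist_is_nakayama+trivialisation+cocomm} and the Nakayama formulas \eqref{eq:nakayama_formula}, \eqref{eq:ribbon_direction_change}, and set $t_{M^*}:=(t_M)^*$ (well defined since $t_M^2=\id_M$). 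One then verifies, by a graphical computation using $\Delta$-separability and commutativity of $A$, that $\widehat{\mathrm{coev}}_M$, $\widehat{\mathrm{ev}}_M$ together with their opposite-handed partners are equivariant module maps satisfying the zig-zag identities; the additional $t_M$-insertions appearing in the opposite-handed (co)evaluations are precisely what makes the two snakes close.

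Next I would establish the twist. That $\hat\theta_M$ lies in $\mathrm{End}_{\ALineCat}(M)$ follows because, by the ribbon relation and $N_A=\theta_A$ (Proposition~\ref{prop:twist_is_nakayama+trivialisation+cocomm}) together with Nakayama-locality, $\theta_M$ intertwines $\rho_M$ with $\rho_M\circ(N_A^{1+\deg M}\otimes\id_M)$, i.e.\ it is a module map $M\to T^{1+\deg M}(M)$, and post-composing with the equivariance isomorphism $t_M^{1+\deg M}\colon T^{1+\deg M}(M)\to M$ yields a module endomorphism; compatibility with $t_M$ comes from naturality of $\theta$ and $t_M^2=\id_M$, and naturality of $\hat\theta$ from naturality of $\theta$ and of $t$. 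For the balancing axiom $\hat\theta_{M\otimes_A N}=\hat c_{N,M}\circ\hat c_{M,N}\circ(\hat\theta_M\otimes\hat\theta_N)$ I would start from $\theta_{M\otimes N}=c_{N,M}\circ c_{M,N}\circ(\theta_M\otimes\theta_N)$ in $\Cc$, restrict and corestrict along $\iota_{M\otimes_A N}$ and $\pi_{M\otimes_A N}$, and match the $t$-insertions on both sides: $\hat c$ contributes powers $t^{\deg}$, each $\hat\theta$ contributes $t^{1+\deg}$, and since $\deg(M\otimes_A N)\equiv\deg M+\deg N \pmod 2$ the powers of $t_M$ and $t_N$ on the two sides agree. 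Finally $\hat\theta_{M^*}=(\hat\theta_M)^*$ (equivalently the compatibility of $\hat\theta$ with $\widehat{\mathrm{ev}}_M$) follows from $\theta_{M^*}=(\theta_M)^*$ in $\Cc$, $t_{M^*}=(t_M)^*$, and $\deg M^*=\deg M$.

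The step I expect to be the main obstacle is the rigidity verification: confirming that $M^*$ with the induced action is Nakayama-local of the same degree, pinning down the correct equivariance datum $t_{M^*}$, and checking that the opposite-handed (co)evaluations with their $t$-insertions satisfy the snake identities — this is the part that genuinely uses $\Delta$-separability, commutativity, Corollary~\ref{cor:even_loop_vanishes}, and the Nakayama identities, rather than only formal properties of equivariantisations. Once rigidity and the $G$-crossed braiding are in place, the twist axioms reduce to the degree bookkeeping above. An alternative packaging would be to first promote $\Cc_A^\textsf{N-loc}$ to a $G$-crossed ribbon category with twist valued in $T^{\deg}$ — not an endomorphism there, but becoming one after equivariantisation — and then invoke the general fact that $G$-equivariantisation turns $G$-crossed ribbon categories into ribbon categories; matching the explicit maps above with that abstract construction costs roughly the same effort.
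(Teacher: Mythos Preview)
Your approach is essentially the same as the paper's: cite the general fact that the equivariantisation of a $G$-crossed braided tensor category is braided tensor, then verify directly that $\hat\theta_M$ is an equivariant module endomorphism, that it satisfies the balancing identity via the degree bookkeeping you describe, and that $\hat\theta_{M^*}=(\hat\theta_M)^*$ using $t_{M^*}=(t_M)^*$. The paper's proof is in fact terser than yours --- it does not spell out the rigidity verification at all --- so your identification of rigidity as ``the main obstacle'' overstates its weight; in particular, Corollary~\ref{cor:even_loop_vanishes} is neither needed nor applicable here, since it assumes $A$ haploid and non-symmetric, whereas the proposition is stated (and proved) for any commutative $\Delta$-separable special Frobenius algebra.
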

\begin{proof}
    It is well-known that the equivariantisation of a $G$-crossed braided tensor category is again a braided tensor category, see e.g.~\cite{DGNO10}. It remains to check that $\ALineCat$ is ribbon. Take an equivariant object $(M,\rho_M, t_M)$ of pure degree. Then $\hat\theta_M$ is indeed a morphism in the equivariantisation $\ALineCat$: it commutes with $t_M$ and is an intertwiner for the $A$-action,
    \begin{align*}
        \hat\theta_M\circ\rho_M &= t_M^{1+\deg M} \circ \theta_M \circ \rho_M\\
        &= t_M^{1+\deg M} \circ \rho_M \circ c_{M,A} \circ c_{A,M} \circ (\theta_{A}\otimes \theta_{M})\\
        &= t_M^{1+\deg M} \circ \rho_M \circ (N_A^{\deg M} \otimes \id_M) \circ (N_A\otimes \theta_{M})\\
        &= \rho_M \circ (N_A^{1+\deg M} \otimes t_M^{1+\deg M}) \circ (N_A^{1+\deg M} \otimes \theta_{M})\\
        &= \rho_M\circ (\id_A\otimes\hat\theta_M) \ .
    \end{align*}
Furthermore, $\hat\theta_M$ is a balancing, since
    \begin{align*}
        &(\hat\theta_M\otimes_A\hat\theta_N) \circ \hat c_{N,M} \circ \hat c_{M,N}\\
        &= \pi_{M\otimes_A N} \circ (t_M^{1+\deg M}\otimes t_N^{1+\deg N})
                              \circ (\theta_M\otimes\theta_N)
                              \circ (t_M^{\deg N}\otimes t_N^{\deg M})
                              \circ c_{N,M}\circ c_{M,N} \circ \iota_{M\otimes_A N}\\
        &= \pi_{M\otimes_A N} \circ (t_M^{1+\deg M +\deg N}\otimes t_N^{1+\deg N+\deg M})
                              \circ (\theta_M\otimes\theta_N)\circ c_{N,M}
                              \circ c_{M,N}
                              \circ \iota_{M\otimes_A N}\\
        &= \pi_{M\otimes_A N} \circ t_{M\otimes N}^{1+\deg (M\otimes N)}
                              \circ \theta_{M\otimes N}
                              \circ \iota_{M\otimes_A N}
                              = \hat\theta_{M\otimes_A N} \ .
    \end{align*}
Finally, as $t_{M^*}=t_M^*$ we conclude that $\hat\theta_M$ is indeed a twist.
\end{proof}

Assuming that $A$ is in addition haploid, we can calculate the dimension of an object $M\in\ALineCat$ using the pivotal structure induced by the ribbon structure. One can check that the dimension $\dim_{\ALineCat} M$ is given by the following diagram (in $\Cc$):
\begin{equation}\label{eq:dim-in-Lspin}
    \includegraphics[valign=c]{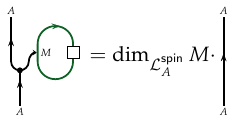}\ .
\end{equation}

Note in particular that we do not obtain the naive module trace of $M$ but rather the twisted one, cf.\ \eqref{eq:dimB-dimF},

\begin{equation*}
    \dim_{\ALineCat} M = \frac{\mathrm{tr}_\Cc(t_M)}{\epsilon\circ\eta} = \frac{\mathrm{tr}_\Cc(t_M)}{\mathrm{tr}_\Cc(t_A)} \ .
\end{equation*}

\begin{proposition}\label{prop:line_defects_for_1f}
    For a commutative Frobenius algebra $A=1\oplus f$ with $f$ a fermion in $\Cc$, that is an object with $\theta_f=-1$ and $f\otimes f\simeq 1$, the induction functor
    \begin{align*}
        U:\Cc\to\ALineCat,\quad x & \mapsto (A\otimes x, \mu\otimes\id_x, N_A\otimes\id_x),\\
                                g & \mapsto \id_A\otimes g
    \end{align*}
    is a ribbon equivalence.
\end{proposition}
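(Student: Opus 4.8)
The plan is to prove this by checking in turn that (1) $U$ is well defined and a functor, (2) $U$ is fully faithful, (3) $U$ is essentially surjective, and (4) $U$ is compatible with the monoidal, braided and ribbon structures. Everything is driven by two elementary facts about $f$: commutativity of $A=1\oplus f$ forces the self-braiding $c_{f,f}$ to be trivial, so the only nontrivial monodromy inside $A$ is that of $f$ with the rest of $\Cc$; and $f\otimes f\cong 1$ (Lemma~\ref{lem:F_square_is_1}) forces the monodromy automorphism $\mathrm{mon}_f(-):=c_{-,f}\circ c_{f,-}$ of the identity functor to square to the identity, hence to act on a simple object $y$ by a sign $\beta(y)\in\{\pm1\}$.

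First I would check well-definedness. For $x$ simple, decomposing the double braiding $c_{A\otimes x,A}\circ c_{A,A\otimes x}$ along $A=1\oplus f$, absorbing the $c_{A,A}$-part by commutativity and using $c_{f,f}=\id_{f\otimes f}$, one finds that on the $f$-summand of the acting copy of $A$ the monodromy acts on $A\otimes x$ by the single scalar $\beta(x)$; hence $A\otimes x$ is Nakayama-local of degree $\nu(x)$ with $(-1)^{\nu(x)}=\beta(x)$, and a general $x$ gives an object of $\Cc_A^\textsf{N-loc}$. That $t_{U(x)}:=N_A\otimes\id_x$ satisfies \eqref{eq:module_nakayama_equivariance} is immediate from $N_A$ being an algebra automorphism, $t_{U(x)}\circ(\mu\otimes\id_x)=(\mu\circ(N_A\otimes N_A))\otimes\id_x=(\mu\otimes\id_x)\circ(N_A\otimes t_{U(x)})$, and $(t_{U(x)})^2=\id$ since $N_A^2=\id_A$ (Proposition~\ref{prop:twist_is_nakayama+trivialisation+cocomm}); functoriality is clear. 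For full faithfulness I would use the induction–restriction adjunction to identify module maps $A\otimes x\to A\otimes y$ with morphisms $g\colon x\to A\otimes y$, and then show via the equivariance conditions that such a $g$ underlies an $\ALineCat$-morphism exactly when it lands in the $+1$-eigensummand $1\otimes y\cong y$ of $N_A\otimes\id_y$, the resulting morphism being $\id_A\otimes g$; so $\mathrm{Hom}_\Cc(x,y)\xrightarrow{\sim}\mathrm{Hom}_{\ALineCat}(U(x),U(y))$.

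The heart is essential surjectivity, and here my plan is explicit. Given $(M,\rho_M,t_M)\in\ALineCat$, set $M_+:=\im(\tfrac12(\id_M+t_M))$ and $M_-:=\im(\tfrac12(\id_M-t_M))$ (these exist by idempotent-completeness). Restricting the module axiom to the summand $f\otimes f$ of $A\otimes A$ and using that $\mu_{ff}^1\colon f\otimes f\to 1$ is an isomorphism, the $f$-action $\rho_M^f\colon f\otimes M\to M$ satisfies $\rho_M^f\circ(\id_f\otimes\rho_M^f)=\mu_{ff}^1\otimes\id_M$; from this and invertibility of $f$ I would conclude $\rho_M^f$ is an isomorphism. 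Restricting the equivariance \eqref{eq:module_nakayama_equivariance} to the $f$-part, where $N_A$ acts by $-1$, gives $t_M\circ\rho_M^f=-\rho_M^f\circ(\id_f\otimes t_M)$, so $\rho_M^f$ carries $f\otimes M_+$ isomorphically onto $M_-$. Hence the induced free-module map $\Phi:=\rho_M\circ(\id_A\otimes\iota_{M_+})\colon A\otimes M_+\to M$ is a bijection on underlying objects, it is a module map by construction, and it is equivariant because $t_M\circ\Phi=\rho_M\circ(N_A\otimes t_M)\circ(\id_A\otimes\iota_{M_+})=\rho_M\circ(\id_A\otimes\iota_{M_+})\circ(N_A\otimes\id_{M_+})=\Phi\circ t_{U(M_+)}$, using $t_M\circ\iota_{M_+}=\iota_{M_+}$. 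Thus $\Phi\colon U(M_+)\xrightarrow{\sim}M$ in $\ALineCat$, so $U$ is essentially surjective and, with full faithfulness, an equivalence of categories.

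Finally I would verify the ribbon structure on free modules. Monoidality follows from monoidality of induction, $A\otimes x\otimes_A A\otimes y\cong A\otimes(x\otimes y)$, compatibly with the free equivariance data. For the braiding and twist one checks that the corrective insertions of $t$ appearing in $\hat c$ and $\hat\theta$ cancel the $\beta$-signs produced when $f$-strands are dragged past one another: for $x$ simple, $\theta_{A\otimes x}$ acquires a factor $-\beta(x)$ on the $f$-summand while $t_{U(x)}^{1+\nu(x)}$ contributes another $-\beta(x)$ there, so $\hat\theta_{U(x)}=\beta(x)^2\cdot(\id_A\otimes\theta_x)$ on that summand, i.e.\ $\hat\theta_{U(x)}=U(\theta_x)$, and the braiding computation is analogous. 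The main obstacle I foresee is the step in paragraph three that $\rho_M^f$ is a genuine isomorphism and not merely a split epimorphism: in non-semisimple generality this requires an argument using that $f$ is invertible (so $f\otimes(-)$ is an autoequivalence of $\Cc$) together with the identity $\rho_M^f\circ(\id_f\otimes\rho_M^f)=\mu_{ff}^1\otimes\id_M$, and it is presumably the point at which the ``mild technical assumptions'' alluded to in the introduction enter.
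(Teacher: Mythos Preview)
Your proof is correct and follows the same overall strategy as the paper: both define $M_+:=\im\tfrac12(\id_M+t_M)$ and show that $\Phi=\rho_M\circ(\id_A\otimes\iota_{M_+})$ is an isomorphism $U(M_+)\cong M$ in $\ALineCat$. The paper does not separate out full faithfulness but instead packages $P_+\colon M\mapsto M_+$ as an explicit quasi-inverse functor and checks both round-trips; your adjunction argument for full faithfulness is a fine alternative.

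The one genuine difference is how invertibility of $\Phi$ is established. Your route---observing that $\rho_M^f\circ(\id_f\otimes\rho_M^f)$ equals (up to the unit constraint) the isomorphism $\mu_{ff}^1\otimes\id_M$, so that $\rho_M^f$ is split epi and, via the autoequivalence $f\otimes(-)$, also split mono, hence invertible---is valid and needs nothing beyond the standing hypotheses on $\Cc$ (idempotent completeness, $\mathrm{char}\,k\neq2$). Your final worry is therefore unfounded; the ``mild technical assumptions'' in the introduction refer to exactly these standing hypotheses, not to something extra. The paper instead writes down an explicit inverse
\[
\beta_M=2\,(\id_A\otimes\pi_+)\circ(\id_A\otimes\rho_M)\circ\big((\Delta\circ\eta)\otimes\id_M\big)
\]
built from the Frobenius comultiplication, verifying $\alpha_M\circ\beta_M=\id_M$ via Corollary~\ref{cor:even_loop_vanishes} and $\beta_M\circ\alpha_M=\id$ by checking that $\pi_+\circ\rho_M\circ(\id_A\otimes\iota_+)$ vanishes on the $f$-summand of $A$. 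Your argument is a bit cleaner in that it uses only the algebra structure and invertibility of $f$; the paper's exploits the full Frobenius structure but makes the inverse explicit.
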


\begin{proof}
    Note first that $U$ is indeed a ribbon functor by $U(x)\otimes U(y) = (A\otimes x)\otimes_A (A\otimes y) = A\otimes x\otimes y = U(x\otimes y)$, $U(c_{x,y})=\id_A\otimes c_{x,y} = \hat c_{U(x),U(y)}$ and $U(\theta_x)=\id_A\otimes\theta_x=\hat\theta_{U(x)}$.

    To check that $U$ is an equivalence we give an explicit inverse. For $M \in \ALineCat$ we can decompose $\id_M = p_+ + p_-$ with $p_\pm=\frac{1}{2}(\id_M\pm t_M)$. Define $P_+ : \ALineCat\to\Cc$ as $P_+(M)= \im(p_+)$ and $P_+(h) = h|_{\im(p_+)}$ for $h : M \to N$.

    It is easy to see that $P_+ \circ U \cong \mathrm{Id}_{\Cc}$. Conversely, for $M \in \ALineCat$ let $\iota_+ : P_+(M) \to M$ and $\pi_+ : M \to P_+(M)$ be the embedding and projection maps for $p_+$. Let $\alpha_M : A \otimes P_+(M) \to M$, $\alpha_M = \rho_M \circ (\id_A \otimes \iota_+)$. We will show that $\alpha$ is a natural isomorphism $U \circ P_+ \Rightarrow \mathrm{Id}$.

    First note that $\alpha_M$ is indeed an $A$-module morphism, $\Zb_2$-equivariant, and natural in $M$. To see that each $\alpha_M$ is an isomorphism, one verifies that $\beta_M = 2 (\id_A \otimes \pi_+) \circ (\id_A \otimes \rho_M) \circ ((\Delta \circ \eta) \otimes \id_M) : M \to A \otimes P_+(M)$ is a two-sided inverse. For $\alpha_M \circ \beta_M = \id_M$ one uses Corollary~\ref{cor:even_loop_vanishes}, and for $\beta_M \circ \alpha_M = \id_{A \otimes P_+(M)}$ one first checks that the map $\pi_+ \circ \rho_M \circ (\id_A \otimes \iota_+) : A \otimes P_+(M) \to P_+(M)$ is zero on the subobject $f$ of $A$.
\end{proof}

\begin{corollary}
    By considering the objects of $\Cc_A^\textsf{loc}$ as local $B$-modules we can write $\Cc_A^\textsf{N-loc}\simeq(\Cc_B^\textsf{loc})_A^\textsf{N-loc}$ as explained in Remark~\ref{rem:nlocal_over_a_implies_local_over_b}\,(3). By the previous proposition this induces a ribbon equivalence
    \[
        \ALineCat \simeq \Cc_B^\textsf{loc}\,.
    \]
\end{corollary}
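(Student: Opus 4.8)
The plan is to recognise $\ALineCat$ as the spin-line-defect category of $A$ \emph{formed inside} $\Cc_B^\textsf{loc}$ and then apply Proposition~\ref{prop:line_defects_for_1f} there. By Theorem~\ref{thm:simple_commutative_frobenius_algebras} together with the rescaling of $B$ in Remark~\ref{rem:symmetric_subalgebra_rescaling}, we have a decomposition $A = B\oplus F$ in which $B$ is a haploid commutative symmetric $\Delta$-separable special Frobenius algebra in $\Cc$, and in which $A$, viewed via its restricted $B$-action as an object of $\Cc_B^\textsf{loc}$, is again a haploid commutative $\Delta$-separable special Frobenius algebra — now inside $\Cc_B^\textsf{loc}$ — with $A = B\oplus F$ where $F$ has twist $-1$ and $F\otimes_B F\cong B$. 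Since $\Cc_B^\textsf{loc}$ is ribbon by \cite[Thm.\,1.17]{KO02}, is $k$-linear, additive and idempotent-complete (idempotents split because they split in $\Cc$ and local modules are closed under summands), and has absolutely simple tensor unit $B$ because $B$ is haploid (\cite[Lem.\,4.5]{FS03}), it meets all the standing assumptions on the ambient category (after strictifying). Thus $F$ is a ``fermion'' in $\Cc_B^\textsf{loc}$ in the sense of Proposition~\ref{prop:line_defects_for_1f}, and that proposition applies with $\Cc$ replaced by $\Cc_B^\textsf{loc}$ and $1\oplus f$ replaced by $B\oplus F$: the induction functor gives a ribbon equivalence from $\Cc_B^\textsf{loc}$ onto the equivariantisation $\bigl((\Cc_B^\textsf{loc})_A^\textsf{N-loc}\bigr)^{\Zb_2}$.

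It then remains to identify this equivariantisation with $\ALineCat$. Remark~\ref{rem:nlocal_over_a_implies_local_over_b}\,(3) already supplies an isomorphism of plain categories $(\Cc_B^\textsf{loc})_A^\textsf{N-loc}\cong\Cc_A^\textsf{N-loc}$, acting as the identity on morphisms and only forgetting the $B$-action. I would check that this isomorphism is in fact an equivalence of $\Zb_2$-crossed braided (hence, after equivariantisation, ribbon) categories: the relative tensor product $\otimes_A$ is in both pictures the image of the idempotent \eqref{eq:module_projector} built from the $A$-actions and the ambient braiding, the braiding on $\Cc_B^\textsf{loc}$ is by construction the (co)restriction of the braiding of $\Cc$, and the Nakayama automorphism $N_A$ is given by the same formula \eqref{eq:nakayama_formula} applied to the same (co)multiplication and (co)unit of $A$ (the original ones, by Remark~\ref{rem:symmetric_subalgebra_rescaling}). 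Hence the $\Zb_2$-action $T$, the crossed braiding $\tilde c_{M,N}$, the equivariantisation data and the resulting ribbon structure all transport across, so $\bigl((\Cc_B^\textsf{loc})_A^\textsf{N-loc}\bigr)^{\Zb_2}\simeq (\Cc_A^\textsf{N-loc})^{\Zb_2} = \ALineCat$ as ribbon categories.

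Composing the two equivalences yields the ribbon equivalence $\ALineCat\simeq\Cc_B^\textsf{loc}$. The one step that is not purely formal is the compatibility check in the second paragraph: one has to be sure that re-reading an $A$-module in $\Cc$ as an $A$-module in $\Cc_B^\textsf{loc}$ changes none of the ingredients entering the $\Zb_2$-crossed ribbon structure — the defining idempotents for $\otimes_A$, the inherited braiding and twist, and the Nakayama of $A$ — so that Proposition~\ref{prop:line_defects_for_1f}, proved for a general ambient category satisfying the standing assumptions, may be invoked verbatim in $\Cc_B^\textsf{loc}$. This bookkeeping is routine but is exactly what upgrades the isomorphism of Remark~\ref{rem:nlocal_over_a_implies_local_over_b}\,(3) to the ribbon equivalence needed here.
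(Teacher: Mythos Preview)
Your proposal is correct and follows exactly the route the paper intends: the corollary has no separate proof in the paper because the argument is contained in its statement — use Remark~\ref{rem:nlocal_over_a_implies_local_over_b}\,(3) to pass to $(\Cc_B^\textsf{loc})_A^\textsf{N-loc}$ and then invoke Proposition~\ref{prop:line_defects_for_1f} in the ambient category $\Cc_B^\textsf{loc}$ where $A=B\oplus F$ with $F$ a fermion. Your second paragraph merely spells out the structure-compatibility check that the paper leaves implicit.
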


\section{The Gauging Construction}\label{sec:gauging_construction}
In this section we will describe how to gauge a commutative Frobenius algebra in an oriented TFT. We will use the description of spin structures as homotopy classes of framings on a 1-skeleton to obtain a ribbon graph encoding the spin structure. The structure of a commutative Frobenius algebra then algebraically captures the relations between different framings in the same homotopy class and between different skeleta.

We will fix for this chapter:
\begin{enumerate}
    \item[$\Cc:$] an idempotent-complete ribbon category,
    \item[$\TargetCat:$] a symmetric monoidal idempotent-complete category.
\end{enumerate}

\subsection{Spin and Oriented Bordism Categories}\label{subsec:bord_categories}
The setting will be that we work with a fixed three-dimensional oriented TFT, i.e.\ a symmetric monoidal functor
\[
    \Zor:\widehat{\textsf{Bord}}^\textsf{or}_3(\Cc) \to \TargetCat\,,
\]
where
\begin{itemize}
    \item $\textsf{Bord}^\textsf{or}_3(\Cc)$ is the bordism category with closed oriented surfaces with punctures coloured in objects of $\Cc$ as objects and diffeomorphism classes of oriented bordisms with embedded ribbon graphs coloured in $\Cc$ and parametrised boundary as morphisms,
    \item $\widehat{\textsf{Bord}}^\textsf{or}_3\to \textsf{Bord}^\textsf{or}_3$ is a fibration in $\textsf{Cat}$,
    \item $\widehat{\textsf{Bord}}^\textsf{or}_3(\Cc)$ the pullback of $\widehat{\textsf{Bord}}^\textsf{or}_3$ along the forgetful functor $\textsf{Bord}^\textsf{or}_3(\Cc)\to\textsf{Bord}^\textsf{or}_3$.
\end{itemize}

We consider fibrations over $\textsf{Bord}^\textsf{or}_3$ to accommodate TFTs with anomaly such as the Reshetikhin-Turaev construction. The reason that we want the fibration over $\textsf{Bord}^\textsf{or}_3(\Cc)$ to be the pullback of the one over $\textsf{Bord}^\textsf{or}_3$ is that we would like to be able to manipulate the ribbon graph embedded in a bordism without having to worry about affecting the fibre in doing so.

\begin{remark}
     While we do not need the details in the following, let us nonetheless explain why in the example of Reshetikhin-Turaev this is indeed a fibration.

    Recall (e.g.\ from~\cite{LR20}) that a Street fibration $p : \mathcal{E}\to\mathcal{B}$ of a category $\mathcal{E}$ over another category $\mathcal{B}$ is a functor satisfying that for any arrow $f:b\to p e$ in $\mathcal{B}$ there exists an isomorphism $h:b\to pe'$ a $p$-cartesian arrow $g: e'\to e$  in $\mathcal{E}$ such that $f=pg\circ h$. An arrow $g:e'\to e$ in $\mathcal{E}$ is called $p$-cartesian if for any other arrow $g':e''\to e$ in $\mathcal{E}$ and any $h:p e''\to p e'$ in $\mathcal{B}$ such that $pg'=pg\circ h$ there exists a unique lift $k:e''\to e'$ of $h$ in $\mathcal{E}$ such that $g'=g\circ k$.

    The data added to the bordism category in the case of Reshetikhin-Turaev consists of a choice of Lagrangian subspace $\lambda\subset H^1(\Sigma,\mathbb{R})$ for each surface $\Sigma$ and a choice of weight $n\in\Zb$ for each bordism $M$. The weight of the composition of two bordisms is the sum of the weights of the two original bordisms shifted by an additional summand (the Maslov index) computed from the Lagrangian subspaces. We will also denote the resulting bordism category by $\widehat{\textsf{Bord}}^{\textsf{or}}_3$. One can easily convince oneself that every arrow in $\widehat{\textsf{Bord}}^{\textsf{or}}_3$ is cartesian with respect to the forgetful functor to $\textsf{Bord}^{\textsf{or}}_3$: Taking the setup from the definition the bordism $h$ must be endowed with a unique weight $n$ for the condition $g'=g\circ k$ to be fulfilled due to the composition law described above. Thus the forgetful functor $\widehat{\textsf{Bord}}^{\textsf{or}}_3\to\textsf{Bord}^{\textsf{or}}_3$ is a fibration.
\end{remark}

\medskip

For the spin bordism category it is not correct to take plain spin structures on the surfaces and bordisms, since the gluing of two spin structures is in general dependent of the specific choice of lift rather than the isomorphism class. From the perspective of the bordism category one natural way to proceed is to take a fixed spin cover on the entire boundary of a bordism, as is done in~\cite{BM96}. However, an alternative method -- also suggested in~\cite{BM96} -- works better with our more combinatorial approach based on framed skeleta: It is sufficient to choose a specific trivialisation of the (stabilised) tangent bundle at a single point for each boundary component, which can be provided by a choice of frame on that point, and then take the class of all covers that restrict to the ones at the distinguished points. As seen in Section~\ref{subsec:spin_structure} this allows for well-defined gluing. For convenience we will take these frames to be outward pointing for outgoing boundaries and inward for incoming boundaries (meaning that the restriction to the normal bundle of the first two vectors is 0 and the third vector points outward or inward respectively).

We will also refer to such restricted spin structures as spin structures relative to the distinguished framed points $\{p_i\}$. If the $\{p_i\}$ are clear from the context or the choice is irrelevant we will sometimes omit them from the notation.

We will then denote by $\Bordsp$ the bordism category with objects closed surfaces $\Sigma$ equipped with a spin structure relative to the distinguished framed points on $T\Sigma\oplus\underline{\Rb}$ and morphisms bordisms $M$ with parametrised boundary equipped with a spin structure $s$ compatible with the parametrisation also relative to the distinguished framed points.

We next want to lift this to an appropriately fibred bordism category. We take the following pullback diagram in the (strict) 2-category of categories
\[\begin{tikzcd}
    \hBordsp \ar[r] \ar[d] \ar[phantom]{dr}[pos=0.05]{\lrcorner} & \widehat{\textsf{Bord}}^\textsf{or}_3 \ar[d]\\
    \Bordsp \ar[r] & \textsf{Bord}^\textsf{or}_3\ .
\end{tikzcd}\]
The bottom and right arrows are the respective forgetful functors. It can be realised as a pseudopullback due to $\widehat{\textsf{Bord}}^\textsf{or}_3\to\textsf{Bord}^\textsf{or}_3$ being a fibration, as shown in~\cite{JS93}. Since fibrations are stable under pullback we get a fibration $\hBordsp\to\Bordsp$.

\begin{example}
In the case of the anomaly in the Reshetikhin-Turaev TFT, the corresponding pullback to a spin bordism category then simply has triples $(\Sigma,\sigma,\{p_i\},\lambda)$ as objects and $(M,s,n)$ as morphisms. We see that the additional structures on the bordisms category do not interact in this construction.
\end{example}
We may also want to consider external lines in our spin TFT. Those lines will be coloured by elements of $\ALineCat$. The even part of the line defects can -- in the presence of an $A$-action -- be braided through the skeleton without issue. The odd graded part however can not. When trying to work with ``honest'' spin structures one has to project out the odd part. This is the approach taken for example in the gauging construction of~\cite{HLS19}. A different approach, which we will follow, was developed in~\cite{BM96}. There one allows spin structures to become singular around a line defect. This approach is also mentioned in~\cite[Sec.\,4.1.4]{ALW19}.

\begin{definition}
    A \emph{singular spin structure on a 3-manifold $M$} with embedded $\ALineCat$-coloured admissible ribbon graph $E$ is a spin structure on $M\setminus E$. 

    A \emph{singular spin structure on a surface $\Sigma$} with $\ALineCat$-coloured punctures $P$ is a spin structure on the restriction of $T\Sigma\oplus\underline{\Rb}$ to $\Sigma\setminus P$.
\end{definition}

\begin{definition}
    An $\ALineCat$-coloured ribbon graph 
    in a 3-manifold $M$ with singular spin structure
    is called \emph{admissible} when each strand is coloured by either a purely even or purely odd object such that any disk embedded into $M$ is punctured by an even total colour if the boundary circle has bounding framing and an odd total colour when it does not, where total colour means the tensor product over the colours of all punctures. 
    
    A set of $\ALineCat$-coloured punctures on a surface is \emph{admissible} if the colour of each puncture is purely even or purely odd, depending on whether the framing on a circle around the puncture is bounding or not.
\end{definition}

\begin{definition}
    We define the category $\Bordsp(\ALineCat)$ as follows:
    \begin{itemize}
        \item The objects are surfaces $\Sigma$ equipped with one distinguished framed point $p_i$ per connected component, and with framed punctures $P$ coloured admissibly by $\ALineCat$ and a corresponding singular spin structure $\sigma$ which restricts to the framing on each $p_i$.

        \item The morphisms $(M,E,s):(\Sigma,P,\sigma,\{p_i\})\to(\Sigma',P',\sigma',\{p_i'\})$ are (relative) diffeomorphism classes three-manifolds $M$ with incoming parametrised boundary $\Sigma$ and outgoing parametrised boundary $\Sigma'$. $M$ is equipped with an embedded ribbon graph $E$ coloured admissibly in $\ALineCat$ such that $E\cap\Sigma=P$ and $E\cap\Sigma'=P'$, and with a singular spin structure $s$ extending $\sigma$ and $\sigma'$ and restricting to the frames at $\{p_i,p_i'\}$.
    \end{itemize}
\end{definition}

This definition can again be lifted along a fibration of bordism categories. We will from now suppress the extensions of bordism categories given in terms of fibrations since they have no bearing on the construction itself. All constructions presented below should be thought of as taking place in an appropriately extended bordism category.

It is important to our construction to be able to locally manipulate the embedded ribbon graphs, which can be interpreted as a kind of locality condition. Consider an bordism $M$ with an embedded ribbon graph $E$ coloured by some ribbon category $\mathcal{D}$. If we take an embedded three-ball $B=I\times I\times I$ in the interior of $M$ such that $\partial B$ intersects $E$ transversally without intersecting with any coupons and such that $\partial B\cap E \subset I\times\{0\}\times\{0\}$, we get an object in the category of $\mathcal{D}$-coloured ribbon graphs.

To $\mathcal{D}$-coloured ribbon graphs we can assign maps in $\mathcal{D}$ via the Reshetikhin-Turaev functor $\mathcal{F}_\mathcal{D}^\textsf{RT}$, as defined in~\cite[Ch.\,I.2]{Tur10}.

\begin{definition}\label{def:ribbon_tft}
    We say a TFT \emph{respects $\mathcal{D}$-skein relations} when it is invariant under replacements of an embedded three-ball $B$ with any other three-ball $B'$ which has the same boundary and satisfies $\mathcal{F}_\Cc^\textsf{RT}(B)=\mathcal{F}_\Cc^\textsf{RT}(B')$.
\end{definition}
In practice this means we are allowed to apply ribbon relations on the graphs embedded in $M$. We will assume $\Zor$ to respect $\Cc$-skein relations.

\subsection{Framed 1-Skeleta and Ribbon Graphs}
Consider a closed (PLCW) $3$-manifold $M$  together with a trivialisation of the tangent bundle $TM$ on the corresponding 1-skeleton. This gives us a framed graph embedded in $M$. We want to translate this framed graph, or rather its homotopy class, into a ribbon graph, modulo some relations to remove ambiguities introduced by the presentation as a ribbon graph.

We begin by translating the $0$-skeleton, which is simply a collection of (3-)framed points, into coupons. To do so we choose an $3$-ball around each vertex, such that the intersection of its boundary with the 1-skeleton consists of precisely one point for each 1-cell incident on that vertex (or two in the case of a self-loop). A coupon is the embedding of the unit square $I^2$ into that ball. It comes with two preferred tangent vectors, one along each copy of the unit interval, which we take to make up the standard frame $(e_1,e_2)$. We embed the coupon in such a way, that the first two vectors of the original frame at the point agree with the preferred frame in the centre of the coupon. The third vector of the original frame at the 0-cell now gives the normal direction.

For the 1-cells we choose tubular neighbourhoods in $M$ with the balls corresponding to the $0$-cells removed.

We must then translate the framed 1-cells into ribbons. Just as coupons, ribbons, which are embeddings of $I\times I$ or $S^1\times I$, come with two preferred tangential directions and we can generate a preferred frame over the core $I\times\{\frac{1}{2}\}$ in the same way as for the coupons. Then one can complete to a 3-frame by taking the positively oriented normal vector. We can in general not demand the ribbon to induce the same framing as the original 1-cell and instead will demand that the induced framings are homotopic. That this is possible is easy to see: There are only two homotopy classes of framing for the 1-cell, given fixed framings at the endpoints. We may now arbitrarily place a ribbon with its core along the original 1-cell (outside of the neighbourhood of each vertex) and connect it to the coupon corresponding to the vertex somewhere along $\{0\}\times I$ if the ribbon runs into the coupon and along $\{1\}\times I$ if it runs out of the coupon. The resulting homotopy class either already agrees with the framing on the 1-cell or can be made to agree by the insertion of an additional twist.

Note we strictly divide ribbons into incoming and outgoing and attach these to separate sides of the coupons. We do so to make sure that the frames on the whole graph always fit together.

When connecting all of the ribbons to their respective coupons we are met with more choices: We have to choose an ordering among the incoming and outgoing ribbons at each coupon and the related choices of over- and undercrossings. Further we have already chosen a direction for each ribbon. Thinking of the above choices as generators of an equivalence relation, what we have obtained now is a representative of an equivalence class of ribbon graphs coming from each skeleton corresponding to the same spin structure. Explicitly, the equivalence relation is generated as follows:
\begin{enumerate}
    \item We identify all ribbon graphs related by ambient isotopy.
    \item We identify all ways to order the ribbons at vertices. The generating relation for this is shown in Figure~\ref{fig:coupon_relations}.
    \item We identify both choices of direction in the way shown in Figure~\ref{fig:ribbon_rev_relation}.
    \item We may add or remove any number of double twists (or inverse double twists) from each ribbon.
\end{enumerate}
Note that the fourth relation is implied by the first three via an application of the Dirac belt trick. This is the topological analogue of Proposition~\ref{prop:twist_is_nakayama+trivialisation+cocomm}\,(2).

\begin{figure}
    \centering
    \includegraphics[]{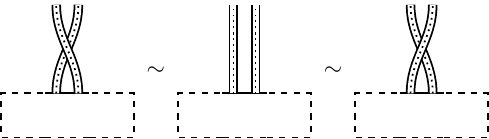}
    \caption{The generators for the set of relations on connections on the coupon. The direction of the ribbons may be either incoming or outgoing}
    \label{fig:coupon_relations}
\end{figure}

\begin{figure}
    \centering
    \includegraphics[]{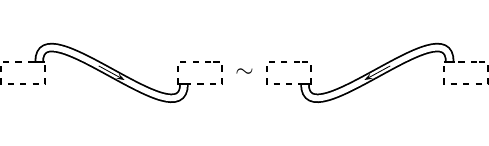}
    \caption{The relation for reversing a ribbon.}
    \label{fig:ribbon_rev_relation}
\end{figure}

We will further extend the discussion to 3-dimensional bordisms of surfaces with a distinguished marked framed point. We will once again take a PLCW decomposition of $M$ and take a trivialisation of the tangent bundle $TM$ on the part of the $1$-skeleton inside the bulk and a trivialisation of $T\Sigma\oplus N\Sigma$ on the part of the $1$-skeleton at the boundary, where $N$ is the normal bundle. In addition, we will demand that the marked points and their framings coincide with one of the 0-cells on the respective boundary components. We now thicken the manifold slightly at the boundaries by gluing on $\Sigma_i\times[0,\epsilon]$ (or $\Sigma_i\times [-\epsilon,0]$, depending on whether the boundary component is incoming or outgoing) for each boundary component $\Sigma_i$. The entire original $1$-skeleton now lives within the bulk and we can proceed to convert it into a ribbon graph as before. For each marked point we add a ribbon from the coupon corresponding to the marked point to $\Sigma\times\{0\}$ with the framing corresponding to the framing of $p_i$ moved constantly along $[0,\epsilon]$ (or $[-\epsilon,0]$, as the case may be). An example is shown in Figure~\ref{fig:s2_extend_to_boundary}.

\begin{figure}
    \centering
    \includegraphics[width=0.6\textwidth]{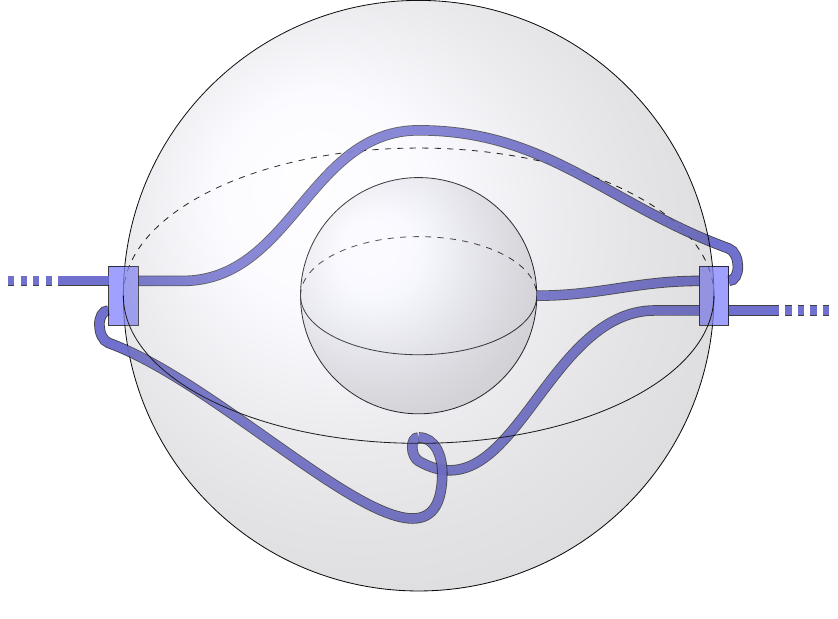}
    \caption{Taking the 2-2-2-stratification of the sphere $S^2$, we can draw an example of the (neighbourhood) of the $S^2\times[0,\epsilon]$ part. The outer sphere goes on to the rest of the bordism, while on the inner sphere, i.e.\ at $S^2\times\{0\}$ we have punctures inserted for the distinguished coupon in $S^2\times\{\epsilon\}$.}
    \label{fig:s2_extend_to_boundary}
\end{figure}

\begin{definition}\label{def:framing_ribbon_class}
    We assign to each framed 1-skeleton of $M$ with framing $fr$ and distinguished boundary points $\{p_i\}$ a class of ribbons graphs $\mathcal{R}\textsf{Frames}(fr;\{p_i\})$ under equivalence relation generated by those of the relations (1)--(4) above that leave the coupon and ribbon corresponding to each $p_i$ in place.
\end{definition}

\subsection{Spin Structures as (Coloured) Ribbon Graphs}
As just described, to the presentation of a spin structure on a 3-manifold $M$ as the homotopy class of a framing of a given 1-skeleton of a PLCW-decomposition of $M$, we can assign the class $\mathcal{R}\textsf{Frames}(fr;\{p_i\})$ of ribbon graphs. There are still multiple classes of ribbon graph corresponding to each spin structure relative to $\{p_i\}$, as we have not yet considered the effect of different choices of 1-skeleton.

We thus add an additional relation to the equivalence class: We identify the images of 1-skeleta achieved by cellular subdivisions as explained in Section~\ref{subsec:spin_structure}. Note that only the subdivisions of 1- and 2-cells directly affect the 1-skeleton. Higher-dimensional subdivisions will not change the 1-skeleton directly, but are nevertheless important as they change the possible subdivisions in dimension 1 and 2 available to us. Note further that it is possible to keep the marked points on the boundary fixed under all these operations. (As shown in~\cite[Thm.\,8.1]{Kir12} one can reach a common subdivision by only applying radial subdivisions, which can be performed without touching the marked point.)

For a given spin structure $s$ relative to $\{p_i\}$ call the class of ribbon graphs we assign to it $\Rc\textsf{Spin}(s;\{p_i\})$. Furthermore, given a ribbon graph $G$ call the associated class of ribbon graphs equivalent under $\Cc$-skein relations $\Rc\textsf{Graphs}(G)$.

\begin{definition}
    Given a $\Delta$-separable Frobenius algebra $A$ denote by $\mu_{k,\ell}$ the unique map $A^{\otimes k}\to A^{\otimes \ell}$, composed only of (co)multiplications such that the representing graph is connected.
\end{definition}

For example, one can write $\mu_{k,\ell}$ as $A^{\otimes k} \xrightarrow{a} A \xrightarrow{b} A^{\otimes \ell}$, where $a$ is a $(k-1)$-fold product, and $b$ a $(\ell-1)$-fold coproduct.

\begin{lemma}\label{lem:loop_removal}
    Let $G$ be a connected ribbon graph in a 3-ball that can be evaluated under $\Fc^\textsf{RT}$ such that the ribbons are all coloured by a $\Delta$-separable commutative Frobenius algebra $A$ in a ribbon category $\Cc$ and the coupons are all coloured by the maps $\mu_{i,j}$. Assume all loops in the graph have bounding framing. Let $R$ be a strand in $G$ not attached to the boundary, such that removing the strand does not disconnect $G$ and denote the graph with $R$ removed $G\setminus R$. Then $\Fc^\textsf{RT}(G) = \Fc^\textsf{RT}(G\setminus R)$.
\end{lemma}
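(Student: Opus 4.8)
The plan is to reduce the claim to a sequence of local moves on the ribbon graph $G$, each of which preserves $\Fc^\textsf{RT}$, that collectively strip the strand $R$ from $G$. Since $G$ is connected and $R$ does not disconnect it, the strand $R$ together with the two coupons at its ends (or the one coupon, if $R$ is a loop at a single coupon) sits inside a sub-ball $B$ whose complement in $G$ is still connected; the idea is to isotope and contract everything inside $B$ down to the two endpoints, absorbing $R$ into the adjacent $\mu_{i,j}$-coupons. Concretely, first I would use the $\Delta$-separability of $A$ (i.e.\ $\mu\circ\Delta = \id_A$) to note that any bigon formed by two parallel $A$-strands joined by a coproduct below and a product above collapses to a single strand; more generally, a connected tree of $A$-coloured coupons evaluates to the canonical map $\mu_{k,\ell}$ on its external legs, independently of the internal structure, which is exactly the content of the definition of $\mu_{k,\ell}$ combined with the Frobenius and associativity/coassociativity relations. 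This means I may freely re-associate and re-bracket the coupons at the two ends of $R$.

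Next I would handle the framing. The hypothesis that every loop in $G$ has bounding framing, together with Proposition~\ref{prop:twist_is_nakayama+trivialisation+cocomm}\,(2) (which gives $\theta_A^2=\id_A$, so that only the parity of the framing matters) and the relation \eqref{eq:ribbon_direction_change} relating a direction reversal to an insertion of $N_A=\theta_A$, lets me arrange that $R$ carries trivial framing and a chosen orientation without changing $\Fc^\textsf{RT}(G)$: any framing or orientation mismatch picked up along the way is a $\theta_A^{\pm 1}=N_A^{\pm 1}$ that can be pushed to a loop, and a loop with a single $N_A$ insertion is precisely the "even loop" of Corollary~\ref{cor:even_loop_vanishes} — but wait, that corollary requires $A$ non-symmetric and gives $0$, which is the wrong sign here; so instead, in the present (possibly symmetric) generality I would push the $N_A$ around the bounding loop using naturality of the twist and cocommutativity (Proposition~\ref{prop:twist_is_nakayama+trivialisation+cocomm}\,(3)) until it cancels, exactly as in the topological remark that relation (4) follows from relations (1)--(3) by the Dirac belt trick. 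Once $R$ is an unframed, unknotted, unlinked strand between two coupons, I contract it: merging its two endpoint coupons along $R$ produces a single coupon whose colour is obtained by composing the two $\mu_{i,j}$'s along one $A$-leg, which by associativity/coassociativity is again of the form $\mu_{i',j'}$. The resulting graph is $G\setminus R$ with the two coupons fused, and one final application of the defining relations of $\mu_{i,j}$ (connectedness is preserved since removing $R$ did not disconnect $G$) identifies this with the standard presentation of $G\setminus R$.

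The main obstacle is the bookkeeping in the case where $R$ is genuinely "internal to a cycle" of the graph, i.e.\ where $R$ is one edge of a loop in $G$: there I cannot simply fuse two distinct coupons, and instead must first use the Frobenius relation to slide the rest of that cycle past the coupon(s) so that $R$ becomes a short arc bounding a disk, at which point $\Delta$-separability collapses it. Making precise that this sliding is always possible — that a connected $A$-coloured graph in a ball with a non-disconnecting internal strand can always be brought, via $\Fc^\textsf{RT}$-preserving moves, into a form where that strand is a collapsible bigon edge — is the technical heart. I expect to do this by induction on the number of coupons (or on the first Betti number of the underlying graph of $G$), using at the base case the explicit $\Delta$-separability identity and at the inductive step the Frobenius and (co)commutativity relations to reduce the complexity of the portion of $G$ on one side of $R$; the hypothesis that all loops are bounding is what guarantees no obstructing $\theta_A$'s survive this reduction.
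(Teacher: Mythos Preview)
Your proposal has the right ingredients but misses one key step and is more complicated than necessary.

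The paper's argument is short and direct: first handle the self-loop case, then reduce the general case to it. For a self-loop at a single coupon, the bounding-framing hypothesis plus $\theta_A^2=\id_A$ leaves exactly one twist on the loop; inserting an identity coupon and applying relation~\eqref{eq:ribbon_direction_change} converts that twist into a second twist which cancels, and then $\Delta$-separability collapses the loop. For an arbitrary non-disconnecting strand $R$ between two coupons, one simply slides one endpoint of $R$ along a path in $G\setminus R$ (which exists precisely because $G\setminus R$ is connected) until it reaches the other endpoint, producing a self-loop. No induction is needed.

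The genuine gap in your proposal is the unlinking. After the sliding move, the self-loop $R$ will in general be linked with other strands of $G$ --- you cannot get ``an unframed, unknotted, unlinked strand'' for free just by sliding and re-associating. You assert this as an intermediate state but give no mechanism to achieve it. The paper handles this with a one-line observation you omit: because $A$ is commutative, in a connected $A$-coloured graph one may exchange any over-crossing for an under-crossing without changing the evaluation under $\Fc^\textsf{RT}$, and this lets you unlink the self-loop from the rest of $G$. Your induction scheme does not obviously substitute for this, since reducing the number of coupons on one side of $R$ does nothing to remove linking.

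Two minor points: your detour through Corollary~\ref{cor:even_loop_vanishes} is indeed wrong (as you noticed), and the correct framing bookkeeping is the explicit manipulation with~\eqref{eq:ribbon_direction_change} in the self-loop case rather than pushing Nakayamas around an unspecified loop. And your ``merge the two coupons'' step is not the same as removing $R$: composing $\mu_{i_1,j_1}$ and $\mu_{i_2,j_2}$ along one leg gives a single coupon that still has $R$ inside it topologically, so this reinterpretation alone does not produce $G\setminus R$.
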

\begin{proof}
    This is in essence a restatement of $\Delta$-separability.

    We will first treat the case of a ribbon going from a coupon to itself. If we cancel twists by applying Proposition~\ref{prop:twist_is_nakayama+trivialisation+cocomm}, we wind up with a loop with exactly one twist on it. To apply $\Delta$-separability we simply insert an identity coupon on the loop. In the planar projection one half of the split ribbon must now have the twist on it. We can invert the attaching lines by \eqref{eq:ribbon_direction_change} which gains us an additional twist, which we may again cancel. In the graphical calculus of ribbon categories this is
    \[
        \includegraphics[valign=c]{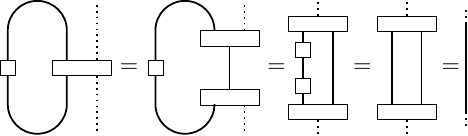}\ .
    \]
    Take now an arbitrary $R$. Commutativity and the Frobenius property allow us to take one end of the ribbon and slide it along the skeleton. Due to the assumption that $G\setminus R$ is connected, we can find a different path in $G$ to the other end $R$. Sliding the ends together in this way, we have almost reduced the situation to that of the single loop above, but we may have linked part of the rest of the graph.
    By commutativity of $A$, in a connected $A$ graph $G$ we have $\Fc^\textsf{RT}(G)= \Fc^\textsf{RT}(G')$, where $G'$ differs from $G$ by exchanging an over-braiding for an under-braiding. In this way, one can unlink the loop without changing the evaluation under $\Fc^\textsf{RT}$.
\end{proof}

\begin{proposition}\label{prop:A_graph_equiv_class}
    If we choose the colour of every ribbon to be a (fixed) commutative, $\Delta$-separable Frobenius algebra $A$ in a ribbon category $\Cc$ and the colour of every coupon with $k$ incoming and $\ell$ outgoing ribbons by $\mu_{k,\ell}$, then for a given spin structure $s$ relative to $\{p_i\}$ and for all $G,G'\in\Rc\textsf{Spin}(s;\{p_i\})$ we have $\Rc\textsf{Graphs}(G)=\Rc\textsf{Graphs}(G')$.
\end{proposition}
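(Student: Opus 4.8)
The plan is to run through the generators of the equivalence relation defining $\Rc\textsf{Spin}(s;\{p_i\})$ and check that each of them, applied to a ribbon graph coloured as in the statement -- every ribbon by $A$, every coupon with $k$ incoming and $\ell$ outgoing legs by $\mu_{k,\ell}$ -- preserves the $\Rc\textsf{Graphs}$-class. Since $\Rc\textsf{Graphs}$-equivalence is generated by the local three-ball replacements $B\rightsquigarrow B'$ with $\Fc^\textsf{RT}(B)=\Fc^\textsf{RT}(B')$ of Definition~\ref{def:ribbon_tft}, and since this subsumes ambient isotopy (any isotopy is a composite of isotopies supported in balls, under which $\Fc^\textsf{RT}$ is invariant), it is enough to realise each generator as a finite composite of isotopies and such local replacements, reading the needed $\Fc^\textsf{RT}$-equalities off the algebraic properties of $A$. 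The generators fall into two groups: the moves (1)--(4) of Section~\ref{subsec:bord_categories} relating the ribbon graphs assigned to one fixed framed $1$-skeleton (and to homotopic framings of it, rel the frames at the $p_i$), and the PLCW subdivision moves relating different $1$-skeleta of $M$.

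For the first group: move~(1) is ambient isotopy, already contained in $\Rc\textsf{Graphs}$-equivalence. Move~(2), reordering the legs at a coupon (Figure~\ref{fig:coupon_relations}), is a local replacement whose two sides agree under $\Fc^\textsf{RT}$ because $\mu_{k,\ell}$ is by definition the \emph{unique} connected composite of (co)multiplications, so reordering the legs -- via commutativity and cocommutativity of $A$ (Proposition~\ref{prop:twist_is_nakayama+trivialisation+cocomm}(3)) and the Frobenius relation -- does not change the underlying map. Move~(3), reversing a ribbon (Figure~\ref{fig:ribbon_rev_relation}), follows from the string-diagram identity~\eqref{eq:ribbon_direction_change}, the reversal coming with a Nakayama insertion as displayed there, and is therefore again a local $\Fc^\textsf{RT}$-equality. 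Move~(4), inserting or deleting a double twist on a ribbon, is the identity $\theta_A^2=\id_A$ (Proposition~\ref{prop:twist_is_nakayama+trivialisation+cocomm}(2)); it is anyway implied by (1)--(3) through the belt trick, as noted in the text.

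For the second group, recall from Section~\ref{subsec:spin_structure} that of the elementary subdivisions only those of $1$- and $2$-cells touch the $1$-skeleton. A $1$-cell subdivision inserts a $0$-cell in the interior of an edge with the inherited framing, i.e.\ an identity coupon $\mu_{1,1}=\id_A$ on the corresponding ribbon, leaving $\Fc^\textsf{RT}$ unchanged, while subdivisions of cells of dimension $\ge 3$ do not alter the $1$-skeleton at all. The substantial case is the subdivision of a $2$-cell $D$: it adds a single new ribbon $R$, the image of the diameter of $D$, whose framing is forced (Section~\ref{subsec:spin_structure}) to be the unique one homotopic rel endpoints to each of the two arcs $\gamma_\pm$ of $\partial D$ cut out by the diameter's endpoints -- equivalently, the framing extends over both half-cells $D_\pm$. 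Because $R$ and $\gamma_+$ are homotopic framed arcs cobounding $D_+$ with a framing extending over $D_+$, one may ambient-isotope $R$ across $D_+$ so that it runs alongside $\gamma_+$ with matching framing; then one removes $R$ by the method of Lemma~\ref{lem:loop_removal}: sliding one end of $R$ along $\gamma_+$ (a composite of Frobenius moves) turns $R$ into a small loop at a single coupon which, $R$ and $\gamma_+$ now having equal framing, carries an even number of twists, hence a trivial one after cancellation, and such a bounding loop at a coupon is deleted by $\Delta$-separability. Carried out inside a three-ball neighbourhood of $D_+$, this exhibits the $2$-cell subdivision as a $\Rc\textsf{Graphs}$-move. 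Combining the two groups -- and using \cite[Thm.\,8.1]{Kir12} to connect any two $1$-skeleta of $M$ -- shows that all of $\Rc\textsf{Spin}(s;\{p_i\})$ lies in one $\Rc\textsf{Graphs}$-class, which is the assertion.

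The step I expect to be the main obstacle is the $2$-cell subdivision in degenerate situations: the characteristic map of a PLCW cell need not be injective on its boundary, so the two diameter endpoints may coincide in $M$ and $\gamma_+$ may itself be a non-bounding loop. Then one cannot simply invoke Lemma~\ref{lem:loop_removal} for a neighbourhood of $D_+$ -- its hypothesis that all loops be bounding fails -- and must instead rerun its sliding argument directly; the point that survives the degeneracy is precisely that sliding an end of $R$ once along $\gamma_+$ yields a loop of \emph{even} framing because $R$ and $\gamma_+$ have equal framing. Alternatively one can first apply auxiliary $1$- and $2$-cell subdivisions (already shown neutral) to reduce to $\partial D$ being an embedded circle with distinct diameter endpoints. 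One should also confirm that $2$-cells lying on a boundary surface cause no trouble: after the standard thickening of $\partial M$ the whole $1$-skeleton sits in the bulk, and all subdivisions used can be chosen away from the $0$-cells carrying the distinguished framed points $\{p_i\}$, so these are never disturbed and the whole argument takes place within $\Rc\textsf{Spin}(s;\{p_i\})$.
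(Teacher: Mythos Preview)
Your proposal is correct and follows essentially the same route as the paper: check each generator of the $\Rc\textsf{Spin}$-equivalence against the algebraic identities for $A$ (commutativity for move~(2), identity~\eqref{eq:ribbon_direction_change} for move~(3), $\theta_A^2=\id_A$ for move~(4)), note that $1$-cell subdivision is trivial, and reduce the $2$-cell subdivision to Lemma~\ref{lem:loop_removal}, i.e.\ to $\Delta$-separability. The paper's proof is terser --- it simply invokes the lemma ``in a regular neighbourhood of the $2$-cell'' --- whereas you spell out the sliding-and-contracting mechanism and, more notably, flag the degenerate case where the characteristic map of the $2$-cell is not injective on its boundary. That concern is legitimate (the regular neighbourhood need not be a ball, and the hypothesis of the lemma that all loops be bounding is not automatic), and your two remedies --- rerunning the sliding argument directly, or first applying auxiliary subdivisions to reach an embedded boundary --- are both sound ways to close that gap. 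This is a point the paper glosses over, so your version is in fact a bit more careful here.
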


\begin{proof}
    The definition of both equivalence classes contain ambient isotopy, so we have to check that the local replacements generate all other relations in the definition of $\Rc\textsf{Spin}$.

    The reordering of ribbons incident on a coupon is equivalent to the commutativity of $A$. The relation reversing a ribbon is \eqref{eq:ribbon_direction_change}.

    The subdivision of a $1$-cell is trivial; the subdivision of a $2$-cell follows from Lemma~\ref{lem:loop_removal}, which reduces it to $\Delta$-separability. We apply the lemma in a regular neighbourhood of the 2-cell(s) in question, which gives us a ball in which we can use $\Cc$-skein relations.
\end{proof}

\subsection{Construction of \texorpdfstring{$\Zspin$}{the Spin TFT}}
Let $A$ be a commutative $\Delta$-separable Frobenius algebra in $\Cc$ and let $\Zor:\widehat{\Bord}_3^\textsf{or}(\Cc)\to\TargetCat$ be an oriented TFT which respects $\Cc$-skein relations. In the following we will disregard the data of the fibre, as it is irrelevant to the construction, and only use the base category $\Bord_3^\textsf{or}(\Cc)$. We will use the encoding of a spin structure $s$ on a manifold $M$ with boundary distinguished points $\{p_i\}$ into a ribbon graph $G_s \in\Rc\textsf{Spin}(s;\{p_i\})$ to insert a spin dependence into an oriented TFT. We have the following corollary to Proposition~\ref{prop:A_graph_equiv_class}:

\begin{corollary}
    The evaluation $\Zor(M,G_s)$ for a 3-manifold $M$ with boundary is only dependent on the spin structure $s$ relative to $\{p_i\}$ and not on the choice of representative $G_s$.
\end{corollary}

Let $(\Sigma,\sigma,p)$ be an object of the spin bordism category with connected, non-empty underlying surface $\Sigma$. Take $\Sigma\times[0,1]$ with the product extension $s_0$ of the spin structure $\sigma$ and define
\[
    \Zspin(\Sigma,\sigma,p) := \im\ \Zor(\Sigma\times I,G_{s_0})\,.
\]
In the case of $\Sigma=\emptyset$, we simply set $\Zspin(\emptyset) = \Zor(\emptyset)$.
Note that $\Zor(\Sigma\times I,G_{s_0})$ is an idempotent and hence that this image always exists due to the idempotent completeness of $\TargetCat$.

\begin{remark}
    There are precisely two spin structures on $\Sigma\times I$ relative to $\{p_i\}$ that restrict to $\sigma$ at the boundary. We can distinguish $s_0$ on the skeleton in the following way: For a given $G_{s}$, choose a path from $(p,0)$ to $(p,1)$ in the skeleton that is homotopic (as a plain path with fixed end points, not as a framed path) to the constant path along $\{p\}\times I$. If the chosen path is also homotopic to $\{p\}\times I$ (with the constant frame along $I$) as a framed path, then $s=s_0$.
\end{remark}

\begin{definition}
    Let $M$ be a $3$-manifold with incoming boundary $(\Sigma,\sigma,\{p_i\})$ and outgoing boundary $(\Sigma',\sigma',\{p_i'\})$ equipped with a spin structure $s$ relative to $\{p_i,p_i'\}$ and let $G_s\in\Rc\textsf{Spin}(s,\{p_i\})$ be an $A$-coloured graph representing $s$ as above. Denote by $A^{\{p_i\}}$ the set of punctures on $\Sigma$ induced by $G_s$. We define
    \begin{align*}
        \Zspin&(M,s) :=\\
        &\Zspin(\Sigma,\sigma,\{p_i\}) \xrightarrow{\iota} \Zor(\Sigma;A^{\{p_i\}})&\xrightarrow{\Zor(M, G_s)}\Zor(\Sigma';A^{\{p_i'\}})\xrightarrow{\pi}\Zspin(\Sigma',\sigma',\{p_i'\}),
    \end{align*}
    where $\iota$ and $\pi$ are the inclusion and projections maps coming from the split idempotent above.
\end{definition}

\begin{theorem}
    The assignments of $\Zspin:\hBordsp\to\TargetCat$ define a symmetric monoidal functor.
\end{theorem}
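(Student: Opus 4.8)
The plan is to verify the three defining properties of a symmetric monoidal functor in turn --- well-definedness together with functoriality, monoidality, and compatibility with the symmetry --- relying throughout on the skeleton-independence already established in Proposition~\ref{prop:A_graph_equiv_class} and its corollary. First I would check that $\Zspin$ is well-defined on objects and morphisms. On objects this is essentially by construction, but one must note that the two choices of representative $G_{s_0}$ for the product spin structure on $\Sigma\times I$ give the same idempotent by the corollary, so $\Zspin(\Sigma,\sigma,p)$ is unambiguous; one should also record that $\Zor(\Sigma\times I, G_{s_0})$ is genuinely idempotent, which follows by gluing two copies of $\Sigma\times I$ along a common boundary, observing that the glued $A$-graph represents the same (product) spin structure up to the equivalence relation, and invoking the corollary again. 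On morphisms one checks that $\Zspin(M,s)$ does not depend on the choice of $G_s\in\Rc\textsf{Spin}(s;\{p_i,p_i'\})$ (immediate from the corollary) nor on the choice of PLCW decomposition of $M$ (also absorbed into $\Rc\textsf{Spin}$), and that it respects the relative diffeomorphism relation on bordisms, since a diffeomorphism carries a representing graph to a representing graph for the pushed-forward spin structure.

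Next comes functoriality. Given composable spin bordisms $(M,s):(\Sigma,\sigma)\to(\Sigma',\sigma')$ and $(M',s'):(\Sigma',\sigma')\to(\Sigma'',\sigma'')$, Remark~\ref{rem:spin-compose} supplies the canonical glued spin structure $s\cup_{\Sigma'}s'$ on $M\cup_{\Sigma'}M'$, and one can choose representing skeleta for $M$ and $M'$ that agree on a collar of $\Sigma'$ and whose union represents $s\cup_{\Sigma'}s'$; the distinguished points $p_i'$ are kept fixed throughout, which is exactly why the relative version of the equivalence relation was set up in Definition~\ref{def:framing_ribbon_class}. Then $\Zor(M\cup M', G_{s\cup s'}) = \Zor(M',G_{s'})\circ\Zor(M,G_s)$ by functoriality of $\Zor$, and the middle idempotent $\Zor(\Sigma'\times I, G_{s_0'})$ --- which is sandwiched in via $\pi\circ\iota$ --- is absorbed because composing $M$ with a collar $\Sigma'\times I$ carrying the product spin structure does not change the morphism (the collar can be isotoped away, and the glued graph represents the same spin structure). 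This gives $\Zspin(M',s')\circ\Zspin(M,s) = \Zspin(M\cup M', s\cup s')$. Preservation of identities is the statement that $\Zspin(\Sigma\times I, s_0)$ is the identity on $\Zspin(\Sigma,\sigma,p)$, which holds by definition of the image projection, using that $\Zor(\Sigma\times I, G_{s_0})$ restricted to its image is the identity.

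For monoidality one uses that disjoint union of spin bordisms corresponds to disjoint union of representing graphs, that $\Zor$ is symmetric monoidal, and that the idempotent for $\Sigma_1\sqcup\Sigma_2$ is the tensor product of the idempotents for $\Sigma_1$ and $\Sigma_2$, so the images (and the structure isomorphisms $\iota,\pi$) are compatible with $\otimes$; the unit constraint is handled by the stipulation $\Zspin(\emptyset)=\Zor(\emptyset)$. Compatibility with the symmetry follows because the braiding bordism (a cylinder realizing the swap of two components, carrying the obvious product spin structure) is represented by a graph that is just the two collar ribbons, and $\Zor$ already respects its own symmetry. I expect the main obstacle to be the gluing step in functoriality: one must argue carefully that the union of a representing skeleton for $M$ and one for $M'$, adjusted to agree near $\Sigma'$, genuinely represents the canonically glued spin structure of Remark~\ref{rem:spin-compose} --- that is, that the combinatorial gluing of framed $1$-skeleta computes the cohomological gluing in the relative long exact sequence --- and that the collar $\Sigma'\times I$ inserted between them can be removed without changing the evaluation, which is where skeleton-independence (Proposition~\ref{prop:A_graph_equiv_class}) and the $\Delta$-separability built into the $A$-coloured graphs (via Lemma~\ref{lem:loop_removal}) do the real work.
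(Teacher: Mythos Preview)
Your proposal is correct and follows essentially the same route as the paper. The paper's own proof is terser and focuses exclusively on the composition step --- writing $\iota'\circ\pi'$ as the idempotent $\Zor(\Sigma'\times I,G_{s_0})$, choosing skeleta that agree on the glued boundary, and contracting the duplicated boundary skeleton via skein relations to identify $G_{s_2}\cup G_{s_0}\cup G_{s_1}$ with a representative for the composed spin structure --- while leaving well-definedness, identities, monoidality, and symmetry implicit; your outline makes these explicit but does not depart from the paper's argument in any substantive way.
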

\begin{proof}
    The main point we have to check is that the assignment is compatible with composition. Take two composable bordisms $(M_1,s_1):(\Sigma,\sigma,\{p_i\})\to(\Sigma',\sigma',\{p_i'\})$ and $(M_2,s_2):(\Sigma',\sigma',\{p_i'\})\to(\Sigma'',\sigma'',\{p_i''\})$. Then the composition $\Zspin(M_2,s_2)\circ\Zspin(M_1,s_1)$ has in the middle the composition
    \[
        \dots\to\Zor(\Sigma',A^{\{p_i'\}})\xrightarrow{\pi'}\Zspin(\Sigma',\sigma',\{p_i'\})\xrightarrow{\iota'}\Zor(\Sigma',A^{\{p_i'\}})\to\dots\ ,
    \]
    where $\iota'\circ\pi'$ gives the idempotent defining the state space. By construction, that idempotent is the evaluation of a morphism under $\Zor$ and we rewrite
    \[
        \Zspin(M_2,s_2)\circ\Zspin(M_1,s_1)=\pi''\circ\Zor(M_2,G_{s_2})\circ\Zor(\Sigma'\times I,G_{s_0})\circ\Zor(M_1,G_{s_1})\circ\iota\,.
    \]
    Choosing framed skeleta to construct $G_{s_0}$ and $G_{s_1}$ that agree on $\Sigma'\subset M_1$ and $\Sigma'\times\{0\}$ it is easy to see that $G_{s_1}\cup_{\{p_i'\}} G_{s_0}$ is equivalent under skein relations (by contracting the two copies of the one-skeleton on the glued boundary) to one representing $M_1\cup_{\Sigma'} (\Sigma'\times I)$ with the spin structure induced on the composition (cf.\ Remark~\ref{rem:spin-compose}). Doing the same for $M_2$ we arrive at
    \begin{align*}
        \Zspin(M_2,s_2)&\circ\Zspin(M_1,s_1)\\
        &=\pi''\circ\Zor(M_2\cup_{\Sigma'}(\Sigma'\times I)\cup_{\Sigma'} M_1,G_{s_2}\cup_{\{p_i'\}}G_{s_0}\cup_{\{p_i'\}}G_{s_1})\circ\iota\\
        &= \Zspin(M_2\circ M_1,s_2\circ s_1).
    \end{align*}
    \phantom{.}
\end{proof}

\subsection{Line Defects in \texorpdfstring{$\Zspin$}{the spin TFT}}
Take a morphism $(M,E,s)\in\Bordsp(\ALineCat)$, where $s$ is a singular spin structure. We insert a ribbon graph $G_s$ into $M$ describing $s$ via a skeleton of $M\setminus N(E)$, where $N(E)$ is a regular neighbourhood of $E$.

Our goal now is to integrate the $\ALineCat$-coloured ribbon graph with the spin skeleton in a way appropriate to both the equivariant structure on $\ALineCat$ and the (singular) spin structure on $M$. Doing so requires some mild alterations to the input graph $E$. We then prove that applying these steps will yield a spin TFT respecting $\ALineCat$-skein relations.

When we are given an arbitrary ribbon graph as input, we may encounter coupons which have incoming and outgoing ribbons incident to the same side. So far we are not equipped to talk about the framing in the associated graph, since there will be a mismatch between the standard frame of the coupon and the one of the ribbons and as we have seen the construction of $G_s$, changing the direction is a non-trivial operation. It is however still useful to allow such coupons, for example to encode (co)evaluations as coupons. Given a general graph we will slightly modify it to isolate the problematic points: We insert an identity-coloured coupon as in Figure~\ref{fig:2io_coupons} for each incorrectly oriented ribbon entering a coupon such that the ribbon now enters the new coupon in the wrong direction but the original coupon correctly. We then assign special ways to transport the framings through these two input or two output ribbons as shown in the figure. Note that these are purely a book-keeping device for when we need the framing of the graph and can be ignored otherwise.

The last modification to the coupons we make is inserting projections and injections to and from the relative tensor product $-\otimes_A-$ before and after the map the coupon is coloured by. This is done to adjust between the different monoidal structures on $\ALineCat$ and $\Cc$. For parts of $E$ not in the same connected component we will later see that this is mediated by connections to between $G_s$ and $E$ acting as a sort of generalised projector, see Proposition~\ref{prop:zspin_is_ribbon} below. Further, maps to the monoidal unit in $\ALineCat$ are maps to $A$ in $\Cc$. To make those maps to the unit in $\Cc$ we compose with $\epsilon:A\to1$. Similarly maps from the monoidal unit may be handled via composition with $\eta:1\to A$.

\begin{figure}
    \centering
    \includegraphics{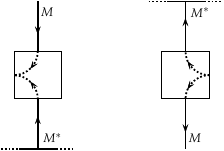}
    \caption{The coupons with two incoming and two outgoing ribbons. Dashed inside we show how the framing is transported across each coupon. The original coupons are indicated by the bottom left and top right bases.}\label{fig:2io_coupons}
\end{figure}

Next we will need to make the connections between $G_s$ and $E$. For each connected component of $E$ not ending on the boundary we make a single connection from somewhere on $G_s$ (the precise location is irrelevant -- we can just move it to where we would like it to be since $G_s$ is connected) to somewhere on the connected component. The framing of that connection is arbitrary since we can just slide any twist on the connection onto $E$ as a pair of $t_M$. We can see that this will not change the evaluation under $\Zor$ as follows: At each coupon of $E$ choose an incident ribbon and insert a $t_M^2=\id_M$. Pushing one of the $t_M$ over the coupon leads to having one $t_M$ insertion for each connection to the coupon (recall from Definition~\ref{def:LA-spin} that coupons of $E$ are compatible with the equivariant structure, i.e.\ they commute with $t_M$'s). We now have inserted two $t_M$ on each ribbon, which can be annihilated, changing the framing of the action in the process due to~\eqref{eq:module_nakayama_equivariance}.

For those components of $E$ that do end on the boundary we add an action at each puncture, again with arbitrary framing, originating from the boundary component of the puncture. For a given boundary component denote a given choice of framed line for the punctures $P$ by $C_P$.

\begin{figure}
    \centering
    \includegraphics[width=0.7\textwidth]{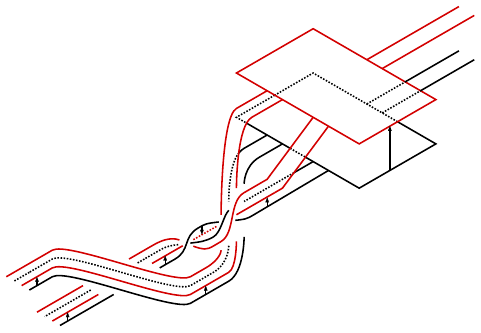}
    \caption{Pushing off part off a ribbon graph from itself. The original graph is shown in black and the push-off in the normal direction is shown in red. The normal direction is indicated by arrows.}
    \label{fig:pushoff}
\end{figure}

In general, the framing of the embedded graph $E$ will disagree with the canonical framing from the spin structure in the sense that, for any push-off of in $E$ into the interior of $M\setminus E$ (the process is illustrated in Figure~\ref{fig:pushoff}), the framing induced by the restriction of spin structure to the push-off
will not be homotopic to the ribbon framing. The difference between the two defines a class in $H^1(G_s\cup\{\text{actions}\}\cup E;\Zb_2)$. We add $t_M$ onto the graph according to that difference. (I.e.\ we add a $t_M$ to each ribbon segment evaluating to 1 under some representative of the class that vanishes on $G_s$ and the actions.)

 We will denote the total resulting graph $G_s(E,C_P,C_P')$, omitting the $C_P$ or $C_P'$ when they are empty, i.e.\ when there are no punctures.

\begin{remark}
    We can view adding a $t_M$ as independently changing the framing of the $A$-part of a module $M=A\otimes x$ to match the spin structure, while leaving the framing on the $x$-part the same. For an induced module we may thus ``absorb'' the $A$ part back into the skeleton.
\end{remark}

\begin{figure}
    \centering
    \includegraphics{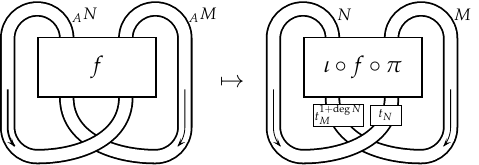}
    \caption{Translation of a simple ribbon graph from $\ALineCat$ to $\Cc$. Here $f:M\otimes_A N\to N\otimes_A M$ is a map in $\ALineCat$ and $\pi$ and $\iota$ are again the projection from and inclusion to the respective tensor products in $\Cc$.}
    \label{fig:t_insertion_example}
\end{figure}

\begin{example}
    Consider the graph in Figure~\ref{fig:t_insertion_example}. The left hand loop will not link to the original graph after pushing off: Pushing the loop off itself in the positive direction (i.e.\ out of the page) is not obstructed by any part of the graph and the loop will thus not link to anything. The right hand loop however will link to the left hand loop: Pushing it off we get stuck under the left hand loop. So we insert a single instance of $t_N$ on the left hand loop.
    On the right loop we may need to insert an additional $t_M$ depending on the degree of the colour of the left hand side.
\end{example}

\begin{example}
    \begin{figure}
        \centering
        \includegraphics[width=0.8\textwidth]{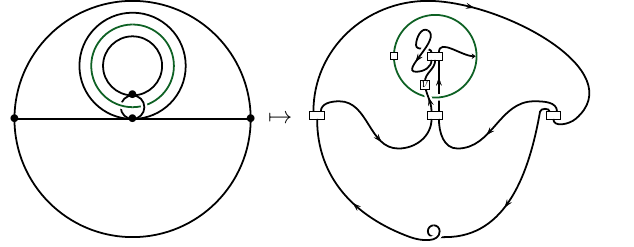}
        \caption{A possible skeleton for a $\ALineCat$-coloured ribbon of degree $\nu$ in $S^3$ and a possible translation into the ribbon graph $G_s(E)$. The module is shown in green and the algebra in black.}
        \label{fig:loop_in_s3}
    \end{figure}
    Consider the embedding of an even $0$-framed external line of degree $\nu$ into a 3-sphere. A possible skeleton and translation into a ribbon graph is shown in Figure~\ref{fig:loop_in_s3}. The ribbon graph can be simplified to find that we just rescale the skeleton by a factor of $\dim_{\ALineCat} M$ as in \eqref{eq:dim-in-Lspin}, provided $A$ is simple (or else to the evaluation of a diagram of the same shape), where $M$ is the colour of the external ribbon. This is a special case of Proposition~\ref{prop:zspin_is_ribbon} below, which tells us that we can locally apply the ribbon structure of $\ALineCat$ to modify the embedded graphs.
\end{example}

Next we will define the state spaces with punctures. Note we cannot simply take the image of an idempotent as in the unpunctured case, as that would be dependent on the choice of connections $C_P$. But for any two choices there is a canonical comparison map: It is given by the evaluation of $\Sigma\times I$ with the product spin structure with two choices $C_P$ and $C_P'$ inserted at the incoming and outgoing end respectively. Note that due to the construction above an insertion of $t_M$ will have to be made if $C_P$ and $C_P'$ differ for a given puncture. We will denote the evaluation under $\Zor$ of this bordism by $p_{C_P',C_P}$. It is easy to check that
\begin{equation*}
    p_{C_P'',C_P'} \circ p_{C_P',C_P} = p_{C_P'',C_P} \ .
\end{equation*}
We will then define
\[
    \Zspin(\Sigma,\sigma,\{p_i\};P) := \lim\Big(\Zor(\Sigma;P,A^{\{p_i\}})\xrightarrow{p_{C_P',C_P}} \Zor(\Sigma;P,A^{\{p_i\}})\Big)\ .
\]
Note in particular that the taking the image of any $p_{C_P,C_P}$ for a fixed choice $C_P$ yields a representative of the limit. We will denote the map $\Zspin(\Sigma,\sigma,\{p_i\};P)\to\Zor(\Sigma;P,A^{\{p_i\}})$ coming from the universal cone by $\iota_{C_P}$ for each choice of $C_P$ and the map $\Zor(\Sigma;P,A^{\{p_i\}})\to\Zspin(\Sigma,\sigma,\{p_i\};P)$ coming from the universal property by $\pi_{C_P}$.

We can now state the main result of this paper:

\begin{theorem}\label{thm:zspin_is_a_TFT}
    Let $\Cc$ be a ribbon category and $\TargetCat$ a symmetric monoidal category, both idempotent-complete, and $\Zor:\widehat{\Bord}_3^\textsf{or}(\Cc)\to\TargetCat$ an oriented TFT respecting $\Cc$-skein relations. Fix a commutative $\Delta$-separable Frobenius algebra $A\in\Cc$. Let
    \begin{itemize}
        \item $(\Sigma,\sigma,\{p_i\};P)$ and $(\Sigma',\sigma',\{p_i'\};P')$ be objects in $\hBordsp(\ALineCat)$,
        \item $(M,E,s)$ be a morphism between the two.
    \end{itemize}

    The assignments
    \begin{align*}
        \Zspin(\Sigma,\sigma,\{p_i\};P) &:= \lim\Big(\Zor(\Sigma; P,A^{\{p_i\}})\xrightarrow{p_{C_P',C_P}}\Zor(\Sigma; P, A^{\{p_i\}})\Big),\\
        \Zspin(M,s;E) &:= \Zspin(\Sigma,\sigma,\{p_i\};P) \xrightarrow{\iota_{C_P}} \Zor(\Sigma;P,A^{\{p_i\}})\\
        & \hspace{2.75em}\xrightarrow{\Zor(M; G_s(E,C_P,C_{P'}))} \Zor(\Sigma'; P',A^{\{p_i'\}})\xrightarrow{\pi_{C_{P'}}} \Zspin(\Sigma',\sigma',\{p_i'\};P')
    \end{align*}
    define a symmetric monoidal functor
    \[
        \Zspin:\hBordsp(\ALineCat)\to\TargetCat\,.
    \]
\end{theorem}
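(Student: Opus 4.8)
The plan is to verify that $\Zspin$ is well-defined on objects and morphisms, that it respects composition and identities, and that it is symmetric monoidal. On objects, the state space is defined as a limit of the system $\{p_{C_P',C_P}\}$; since these maps satisfy the composition law $p_{C_P'',C_P'}\circ p_{C_P',C_P}=p_{C_P'',C_P}$ already recorded above, each $p_{C_P,C_P}$ is an idempotent and the limit exists in $\TargetCat$ by idempotent completeness, with all representatives canonically isomorphic via the comparison maps. The only subtlety on objects is independence of the representative ribbon graph $G_s$, which follows from Proposition~\ref{prop:A_graph_equiv_class} applied to the product manifold $\Sigma\times I$, together with the observation that the $\ALineCat$-coloured part together with the $t_M$-insertions is determined up to $\Cc$-skein equivalence by the singular spin structure and the connection data.

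On morphisms, I would first check that $\Zor(M;G_s(E,C_P,C_{P'}))$ is independent of the chosen skeleton of $M\setminus N(E)$, of the placement of the connections between $G_s$ and $E$, of the framings of those connections, and of the auxiliary identity coupons used to fix wrongly-oriented ribbons. Independence of the skeleton and of framings of connections is exactly the content of the constructions in Section~\ref{sec:gauging_construction}: sliding a twist on a connection onto $E$ produces a cancelling pair $t_M^2=\id_M$ by the equivariance relation~\eqref{eq:module_nakayama_equivariance}, and moving a connection along the connected graph $G_s$ is a $\Cc$-skein move since $G_s$ is $A$-coloured with $\mu_{k,\ell}$-coupons (Proposition~\ref{prop:A_graph_equiv_class} and Lemma~\ref{lem:loop_removal}). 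Independence of $C_P,C_{P'}$ is built into the definition via the comparison maps $p_{C_P',C_P}$ and the universal cone maps $\iota_{C_P},\pi_{C_P}$: a different choice on the incoming side is absorbed by the relation $\iota_{C_P'}=p_{C_P',C_P}\circ$(something) compatible with the limit, and similarly on the outgoing side. One must also check that $\Zspin(M,s;E)$ only depends on the relative diffeomorphism class of $(M,E,s)$, which reduces to the fact that $\Zor$ is a functor on the oriented bordism category together with the PLCW-move invariance of the spin-to-ribbon-graph assignment.

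For compatibility with composition I would mimic the argument in the unpunctured case given above: write $\Zspin(M_2,s_2;E_2)\circ\Zspin(M_1,s_1;E_1)$, use that $\iota_{C_{P'}}\circ\pi_{C_{P'}}$ is the image projector for $p_{C_{P'},C_{P'}}$, hence is itself the $\Zor$-evaluation of $\Sigma'\times I$ with the product spin structure and the appropriate connections. Choosing the skeleta of $M_1\setminus N(E_1)$, $\Sigma'\times I$, and $M_2\setminus N(E_2)$ so that they agree on the glued copies of $\Sigma'$, the three ribbon graphs glue to one representing $M_2\cup_{\Sigma'}(\Sigma'\times I)\cup_{\Sigma'}M_1$ with the induced singular spin structure (Remark~\ref{rem:spin-compose} for the spin part, and Proposition~\ref{prop:A_graph_equiv_class} plus Lemma~\ref{lem:loop_removal} to contract the doubled 1-skeleta and merge the $\ALineCat$-coloured pieces across the seam, where the interpolating connections on $\Sigma'\times I$ act as the ``generalised projector'' alluded to before Proposition~\ref{prop:zspin_is_ribbon}). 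This identifies the composite with $\Zspin(M_2\circ M_1, s_2\circ s_1; E_2\circ E_1)$. Identities are handled by the same $\Sigma\times I$ computation that defines the state space. Finally, the symmetric monoidal structure is inherited levelwise: disjoint union of bordisms goes to $\otimes$ in $\TargetCat$ because $\Zor$ is symmetric monoidal and because the limit/idempotent defining a disjoint union of surfaces is the product of the individual ones; the symmetry constraints are those of $\Zor$.

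I expect the main obstacle to be the gluing step for punctured bordisms, specifically showing that the $\ALineCat$-coloured graphs $E_1$ and $E_2$, after the $t_M$-decorations dictated by their respective singular spin structures and the connection data on the interpolating cylinder, combine across $\Sigma'$ into a single admissibly-decorated graph for the glued singular spin structure. This requires tracking the $H^1(-;\Zb_2)$-class measuring the discrepancy between the ribbon framing and the spin framing across the seam, checking that the interpolating connections on $\Sigma'\times I$ supply exactly the $t_M$'s needed to reconcile the two sides (using $t_M^2=\id_M$ and the equivariance~\eqref{eq:module_nakayama_equivariance} to cancel redundant insertions), and invoking admissibility of the graphs to ensure the parities match on every bounding/non-bounding disk. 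Everything else is a bookkeeping exercise built on the already-established Proposition~\ref{prop:A_graph_equiv_class}, Lemma~\ref{lem:loop_removal}, and the skein-invariance of $\Zor$.
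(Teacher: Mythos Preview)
Your proposal is correct and follows essentially the same route as the paper. The paper's proof is terser and organises the independence-of-$C_P$ step slightly differently: rather than absorbing choices via relations among the $\iota_{C_P}$, it verifies directly the identity $\Zor(M,G_s(E,C_P,C_{P'}))\circ p_{C_P',C_P} = \Zor(M,G_s(E,C_P',C_{P'}))$ (and its outgoing analogue), which shows that the family $\{\Zor(M,G_s(E,C_P,C_{P'}))\}$ is a morphism of cones; uniqueness of the induced map between limits then handles both well-definedness and functoriality in one stroke. Your more explicit gluing argument for composition, and your discussion of tracking the $H^1(-;\Zb_2)$-class of $t_M$-insertions across the seam, are spelled out in more detail than in the paper (which summarises this as ``the additional $t_M$ insertion corresponds directly to the difference''), but the content is the same.
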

\begin{proof}
    It is clear from previous theorems that the skeleton chosen for $M\setminus E$ is not relevant. Neither are the points of insertion for the $t_M$ by naturality: The image of the 0-cochains in the 1-cocycles is generated by cochains sending all edges incident on a single vertex to 1, which is the same as inserting a $\id_M=t_M^2$ onto a single strand incident to the corresponding coupon and pushing one of those $t_M$ through the coupon onto all other incident ribbons.

    Notice that the collection of maps $\{\Zor(M,G_s(E,C_P,C_{P'}))\}$ defines a functor from cones over $\Zor(\Sigma;P,A^{\{p_i\}})\xrightarrow{p_{C_P',C_P}} \Zor(\Sigma;P,A^{\{p_i\}})$ to cones over $\Zor(\Sigma';P',A^{\{p_i'\}}) \allowbreak \xrightarrow{p_{C_{P'}',C_{P'}}} \Zor(\Sigma';P',A^{\{p_i'\}})$. Commutativity of the relevant diagrams follows from the identity $\Zor(M,G_s(E,C_P,C_{P'}))\circ p_{C_P',C_P} = \Zor(M, G_s(E,C_P',C_{P'}))$ (and the analogous statement for the postcomposition with $p_{C_{P'}',C_{P'}}$). To verify this identity, choose a bordism representing $p_{C_P',C_P}$ which agrees with the skeleton chosen for $M$ on the boundary. Note that if the choices of $C_P$ and $C_P'$ differ this will induce a difference of one $t_M$ insertion within $p_{C_P',C_P}$. In the case of a connected component of the graph $E$ which punctures the boundary at only one point, the number of insertions on the strand going to the boundary is irrelevant, with the same argument as used above for components of $E$ that do not touch the boundary at all. In the other cases the additional $t_M$ insertion corresponds directly to the difference between $\Zor(M,G_s(E,C_P,C_{P'}))$ and $\Zor(M,G_s(E,C_P',C_{P'}))$.

    \begin{sloppypar}
    The formula given for $\Zspin(M,s;E)$ then simply gives the unique map of cones $\Zspin(\Sigma,\sigma;P)\to\Zspin(\Sigma',\sigma';P')$. The uniqueness of this map also implies that composition is well-defined.
    Note that the $A$-actions on components of the glued graph in the composition $\Zor(M',G_s(E',C_{P'},C_{P''})) \circ \Zor(M,G_s(E,C_P,C_{P'}))$ which become disconnected from the boundary can be contracted to any one of the actions.
    \end{sloppypar}
\end{proof}

Let us now check that this construction enables us to change the embedded graphs under local replacements without changing the evaluation:

\begin{proposition}\label{prop:zspin_is_ribbon}
    The gauged TFT $\Zspin$ respects $\ALineCat$-skein relations.
\end{proposition}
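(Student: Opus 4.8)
The plan is to reduce everything to the assumed fact that $\Zor$ respects $\Cc$-skein relations, by showing that the translation $E\mapsto G_s(E,C_P,C_{P'})$ turns an $\ALineCat$-skein move into a $\Cc$-skein move. So let $B,B'$ be two admissibly $\ALineCat$-coloured ribbon graphs inside an embedded three-ball of $M$ with the same boundary (transverse to $E$, without coupons on the boundary sphere, as in Definition~\ref{def:ribbon_tft}) and with $\Fc^\textsf{RT}_{\ALineCat}(B)=\Fc^\textsf{RT}_{\ALineCat}(B')$; let $E'$ be obtained from $E$ by replacing $B$ with $B'$, and let $s'$ be the singular spin structure agreeing with $s$ away from the ball (it exists and is determined by admissibility, since all the data coincide on the boundary sphere). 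I would fix a skeleton $G_s$ for $M$ minus a neighbourhood of $E\cup E'$, a system of connections from $G_s$ to the components of $E$ and $E'$, and a framing-correcting $t_M$-cochain, all chosen to coincide for $E$ and for $E'$ outside a slightly enlarged three-ball $\tilde B$, into which we absorb the connection stubs reaching components lying inside $B$, the coupon-local dressings by $\pi$ and $\iota$ for $-\otimes_A-$ (cf.\ \eqref{eq:module_projector}), and the nearby $t_M$-insertions. Since the two translated $\Cc$-graphs then agree outside $\tilde B$, and $\Zor$ respects $\Cc$-skein relations, it is enough to prove that $\Fc^\textsf{RT}_\Cc$ of the two graphs restricted to $\tilde B$ coincide.

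The heart of the matter is a resolution statement: the $\Cc$-coloured ribbon graph produced by the translation inside $\tilde B$ evaluates under $\Fc^\textsf{RT}_\Cc$ to $D_{\mathrm{out}}\circ\bigl(\iota\circ\Fc^\textsf{RT}_{\ALineCat}(B)\circ\pi\bigr)\circ D_{\mathrm{in}}$, where $\iota,\pi$ are the inclusion and projection splitting the relative-tensor-product idempotents on the $\ALineCat$-objects at the boundary sphere, and $D_{\mathrm{in}},D_{\mathrm{out}}$ are fixed $\Cc$-morphisms built from the connection stubs to $G_s$ (and from $\eta,\epsilon$ at unit-coloured legs), depending only on the boundary data of $\tilde B$. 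This would be proved by induction on the presentation of $B$ using the explicit formulas from the proof of Proposition~\ref{prop:line_defects_are_ribbon}: a coupon coloured by $f$ in $\ALineCat$ translates to $\iota\circ f\circ\pi$ in $\Cc$; a crossing translates to $\hat c_{M,N}$, i.e.\ the $\Cc$-braiding with the extra $t_N^{\deg M}$; a (co)evaluation translates to $\widehat{\mathrm{ev}}_M$, $\widehat{\mathrm{coev}}_M$, whose loose $A$-legs are joined to $G_s$ via the chosen connections; and the discrepancy between the $\Cc$-twist $\theta_M$ and the balancing $\hat\theta_M=t_M^{1+\deg M}\circ\theta_M$ is absorbed by exactly the $t_M$-insertions dictated by the framing-correction cochain --- which is the algebraic reflection of the fact that the ribbon framing of $E$ and the spin-induced framing differ by that cochain. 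Granting this, the restriction to $\tilde B$ depends on $B$ only through $\Fc^\textsf{RT}_{\ALineCat}(B)$, the same holds for $B'$, and since the two agree, so do the $\Cc$-graph evaluations on $\tilde B$; the claim follows from $\Zor$ respecting $\Cc$-skein relations.

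I expect the main obstacle to be making the global nature of the $t_M$-cochain harmless: it is only defined up to coboundaries, and the push-off used to define it may link strands of $B'$ with strands outside $\tilde B$ differently than for $B$. One must therefore argue that, by naturality together with $(t_M)^2=\id_M$ and the equivariance \eqref{eq:module_nakayama_equivariance} --- exactly as in the proof of Theorem~\ref{thm:zspin_is_a_TFT} --- any admissible choice of cochain can be moved to one supported inside $\tilde B$ and away from $\partial\tilde B$, after which the comparison is genuinely local. A secondary point is that a $\Cc$-skein move inside $\tilde B$ can change the number of components of $E$ lying entirely inside $B$, hence the number of connections to $G_s$; but an internal component may be connected to $G_s$ anywhere, and surplus $A$-actions on a component that becomes disconnected can be contracted as in the proof of Theorem~\ref{thm:zspin_is_a_TFT}, so this again only alters the graph inside $\tilde B$ and is covered by the resolution statement with the appropriate fixed stubs.
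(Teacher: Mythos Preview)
Your approach is essentially the paper's: reduce to a three-ball and show that the translated $\Cc$-graph there depends on $B$ only through $\Fc^\textsf{RT}_{\ALineCat}(B)$, by checking the generators (crossings, twists, (co)evaluations, coupons) and verifying that the framing-correction $t_M$-insertions exactly absorb the difference between the $\Cc$- and $\ALineCat$-structure maps. The paper packages this as an explicit three-step reduction of any graph to its single-coupon form and is more concrete about two points you only sketch: the change of skeleton required when the topology of $E$ changes (fusing components), done via an explicit comparison of PLCW decompositions, and a dedicated unbraiding identity that lets one slide the $A$-coloured skeleton past the $\ALineCat$-graph before reading off the composition --- this last step is what underlies your claim that $D_{\mathrm{in}},D_{\mathrm{out}}$ depend only on the boundary data of $\tilde B$.
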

\begin{proof}
    Consider a 3-ball $I^3$ embedded in a bordism $M$ with an $\ALineCat$-coloured ribbon graph $E$ in such a way that it can be evaluated under $\mathcal{F}^\textsf{RT}_{\ALineCat}$. All skein relations are generated by replacing the contents of that ball with a single coupon which is coloured by the value of the ball under $\mathcal{F}^\textsf{RT}_{\ALineCat}$. We now want to reduce these relations to the $\Cc$-skein relations satisfied by $\Zor$, that is we want to show that the evaluation under $\Zor$ of $G_s(E,C_P,C_P')$ will not change by applying the relations. Since we have a TFT it is sufficient to check these relations on bordisms with underlying manifold $B^3$. In other words we have to check that sending a graph $E$ to $G_s(E,C_P,C_P')$ in $B^3$ induces a well-defined map from skein modules with $\ALineCat$-skein relations to those with $\Cc$-skein relations in $B^3$.

    To further simplify the calculation we will apply the following steps to an arbitrary $\ALineCat$-coloured graph in $B^3$ and check invariance of the evaluation:
    \begin{enumerate}
        \item Reduce the number of connected components to one by inserting identity-coloured coupons on neighbouring strands belonging to different connected components.
        \item In a given projection into two dimensions of the graph, replace all instances of crossings, twists and (co)evaluations by the respective coupons.
        \item In the resulting planar graph, compose all coupons.
    \end{enumerate}

    \begin{figure}
        \centering
        \includegraphics[]{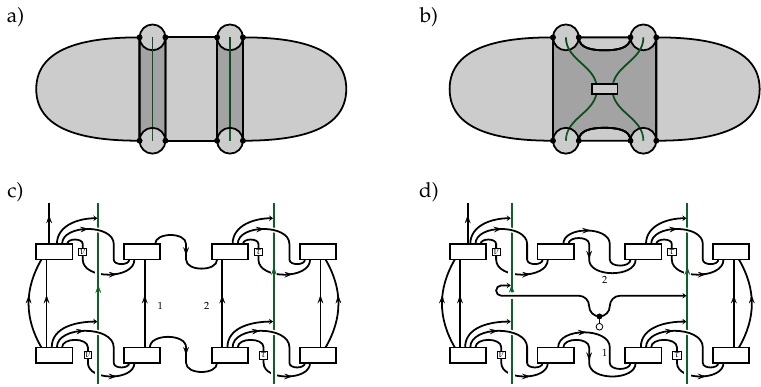}
        \caption{a) A skeleton for to two unconnected lines passing through a ball. The full 1-skeleton is given by the black lines, while the embedded $\ALineCat$ ribbons are shown in green. Some of the 2-cells (belonging to the ``equatorial plane'' of the ball) are indicated by light grey shading.
        \\
        b) A skeleton for four embedded ribbons incident on a coupon in a ball. The coupon arises from the identity in $\ALineCat$, i.e.\ it is labelled by $\iota \circ \pi$, the projector onto $- \otimes_A -$.
        \\
        c) A possible translation of the skeleton in a) into a ribbon graph.
        \\
        d) A possible translation of the skeleton in b) into a ribbon graph.
        \\
        We can transform the ribbon graph in c) into that in d) by first sliding the lines labelled 1 and 2 along the rest of the skeleton and then inserting a projector using the actions of the $A$-coloured lines on the embedded graphs.}
        \label{fig:fusion_in_b3_cells}
    \end{figure}

    For the first step we check fusing two lines in $B^3$. There are two things we need to pay attention to: First we change the topology of the graph, and thus we must change the skeleton as well. Note that the spin structure is uniquely determined by the monodromy around the lines. It remains to check that we can translate the corresponding skeleta into each other. A convenient choice of skeleton for each topology is shown in Figure~\ref{fig:fusion_in_b3_cells}.
    As shown in the figure, we slide the inner one-cells of the tubes containing the module-coloured lines in part c), which are labelled 1 and 2, along the 1-skeleton into the new positions shown in d). This works independent of any choice of translation into a ribbon graph, as correct framing is always imposed. We can use the skeleton to insert projectors between the two lines:
    \[
        \includegraphics[valign=c]{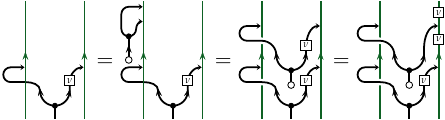}\ .
    \]
    Note the changed $t_M$ insertions in case of the framing of the original actions disagreeing. These ensure that the new paths crossing through the projector all have the correct framing with respect to our convention. Replacing the projector by a coupon yields a ribbon graph representing the skeleton shown in part b) of Figure~\ref{fig:fusion_in_b3_cells}, with an identity in $\ALineCat$ colouring the coupon: Recall that when translating coupons we compose the maps with the corresponding inclusion and projection maps and thus we get a colour of $\iota\circ\id_{M\otimes_A N}\circ\pi = p$.

    For the second step we begin by replacing twists with the appropriate coupons. Here invariance is easy to see: The difference between the twist in $\ALineCat$ and $\Cc$ is given by $t_M^{1+\deg M}$, but $1$ is precisely the difference in framing and $\deg M$ the difference due to the self-linking of $M$ after removing the twist. Second, and similarly simple is inserting coupons for (co)evaluations. Due to the insertion of (co)units when translating the graphs the $A$-actions that are part of the definition of the (co)evaluations disappear. The left (co)evaluations now agree with those in $\Cc$ (up to insertion of a projector, which works as before), while the right (co)evaluations differ by the insertion of a $t_M$ (see the list of structure morphisms below Definition~\ref{def:LA-spin}). That is precisely in line with framing change due to the insertions of the coupons of Figure~\ref{fig:2io_coupons}. Finally the last point we need to check for the second step is the braiding. Note that we alter the skeleton here, too. Invariance under change of skeleton can be argued in the same way as in the first step, by choosing two convenient skeleta and translating one into the other. Indeed, replacing a braiding by a coupon alters the self-linking in the correct way to match the $t_M$ insertion that makes up the difference between the braiding in $\ALineCat$ and $\Cc$. Lastly we can again insert projectors as needed.

    On the side of the $\ALineCat$-coloured graph we have now reduced the graph in such a way that it can be simply read off as a composition of maps. On the constructed $\Cc$-coloured graph $G_s(E,C_P,C_P')$ we would like to do the same, but we can only do so once the graph has been sufficiently unlinked from the skeleton. That can be achieved by the identity
    \[
        \includegraphics[valign=c]{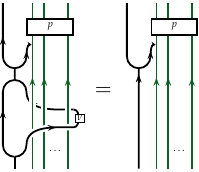}\ .
    \]
    Here, $p$ is used as a shorthand for the $A$-ribbon graph representing the idempotent projecting to $\otimes_A$. Thus we can move the skeleton out of the way and write down the composition of the coupons in $\Cc$. Due to the arguments above all coupons agree (up to maybe spurious insertions of projectors) on both sides and we arrive at the same result.
\end{proof}

\subsection{Spin Refinements}
We have seen that the insertion of $A$-coloured defects will define a spin TFT. It is clear that summing over all spin structures will generally yield a result only dependent on the underlying oriented manifolds. We will see that these operations are in fact in some sense mutual inverses. Since we will appeal to results from Section~\ref{sec:frobenius_algebras}, for the remainder of this section we put stronger assumptions on $\Cc$, $\TargetCat$ and $\Zor$:
\begin{itemize}
    \item[$k$:] a field with $\mathrm{char}(k) \neq 2$,
    \item[$\Cc$:] a $k$-linear additive idempotent-complete ribbon category, with bilinear tensor product and with absolutely simple tensor unit,
    \item[$\TargetCat:$] a $k$-linear additive idempotent-complete symmetric monoidal category,
    \item[$\Zor$:] a $k$-linear symmetric monoidal functor $\widehat{\textsf{Bord}}^\textsf{or}_3(\Cc) \to \TargetCat$, which is linear in each coupon label of a ribbon graph inside a given bordism.
\end{itemize}

Let $A$ be a haploid commutative $\Delta$-separable special Frobenius algebra in $\Cc$. By Theorem~\ref{thm:simple_commutative_frobenius_algebras}, $A$ splits into the parts $B$ and $F$. As in Remark~\ref{rem:symmetric_subalgebra_rescaling}, $B$ can also be equipped with the structure of a haploid symmetric commutative $\Delta$-separable special Frobenius algebra. Denote by $\mathcal{Z}^B$ the TFT arrived at by inserting $B$-coloured defects. It does not depend on the spin structure, as $B$ is symmetric.

Note that $\Zspin$ is defined in terms of maps $\Zor(\Sigma; A^{\{p_i\}})\to\Zor(\Sigma'; A^{\{p_i'\}})$ and that the analogous statement with $B$ in place of $A$ is true for $\mathcal{Z}^B$. For each connected surface $\Sigma$ we have a canonical maps $\Zspin(\Sigma,\sigma,p) \hookrightarrow \Zor(\Sigma;A)$ and of $\mathcal{Z}^B(\Sigma; {}_BA)\hookrightarrow\Zor(M;A,B)\cong\Zor(\Sigma,A\otimes B)$ (where by ${}_BA$ we indicate that we consider $A$ as a $B$-module here). We further have an isomorphism $\Zor(\Sigma;A)\to\Zor(\Sigma;A,B)\cong\Zor(\Sigma;A\otimes B)$ given by comultiplication of $A$ and projection onto $B$ on one of the strands, with the inverse given by multiplication.

We can now state the spin refinement theorem, which is a direct generalisation of~\cite[Thm.\,15.3]{BM96}.

\begin{theorem}\label{thm:spin_refinement}
    Let $A$ be a haploid commutative $\Delta$-separable special Frobenius algebra in $\Cc$. Assume $A$ is not symmetric and denote by $A=B\oplus F$ the splitting into the parts with twist $\pm1$. Then the functor $\Zspin$
    is a spin refinement of the functor $\mathcal{Z}^B$ in the sense that for surfaces $\Sigma,\Sigma'$, with spin structures $\sigma,\sigma'$ and marked points $\{p_i,p_i'\}$, and a bordism $M:\Sigma\to\Sigma'$ we have
    \begin{enumerate}
        \item as subspaces of $\Zor(\Sigma;(A\otimes B)^{\{p_i\}})$:
            \[
                \bigoplus_{\sigma\in\mathrm{Spin}(\Sigma;\{p_i\})} \Zspin(\Sigma,\sigma,\{p_i\}) = \mathcal{Z}^B(\Sigma; {}_BA^{\{p_i\}})\,,
            \]
        \item as morphisms $\Zor(\Sigma;B^{\{p_i\}})\to\Zor(\Sigma';B^{\{p_i'\}})$:
            \[
                2^{-b_0(M^\textsf{cl})-b_0(\Sigma)}\sum_{s\in\mathrm{Spin}(M;\{p_i,p_i'\})} \Zspin(M,s) = \mathcal{Z}^B(M)\,,
            \]
    \end{enumerate}
    where $\mathrm{Spin}(\Sigma;\{p_i\})$ and $\mathrm{Spin}(M;\{p_i,p_i'\})$ denote the sets of spin structures relative to the marked points and $b_0(M^\textsf{cl})$ is the number of connected components of $M$ without boundary.
\end{theorem}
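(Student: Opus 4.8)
The plan is to reduce both statements to a Gauss sum over spin structures for the $\mathbb{Z}_2$-gauge field carried by the summand $F$ of $A$, and first I would pass to the symmetric subalgebra. By Theorem~\ref{thm:simple_commutative_frobenius_algebras} and Remark~\ref{rem:symmetric_subalgebra_rescaling}, $A=B\oplus F$ is a haploid commutative $\Delta$-separable special Frobenius algebra in the ribbon category $\Cc_B^\textsf{loc}$, and there commutativity of $A$ forces the self-braiding $c_{F,F}$ to be trivial: $\mu_{FF}^B$ is a nonzero $B$-module map $F\otimes_B F\to B$, hence (by haploidness and $F\otimes_B F\cong B$) an isomorphism, so $\mu_{FF}^B\circ c_{F,F}=\mu_{FF}^B$ gives $c_{F,F}=\id$. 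Moreover $\mathcal{Z}^B$ is the oriented gauging of $\Zor$ by the symmetric algebra $B$; it respects $\Cc_B^\textsf{loc}$-skein relations (oriented analogue of Proposition~\ref{prop:zspin_is_ribbon}), and inserting an $A=B\oplus F$-skeleton into $\Zor$ is the same as inserting the $B$-part — producing $\mathcal{Z}^B$ — and then an $f$-skeleton over $\Cc_B^\textsf{loc}$. So it suffices to treat the case $B=1$, $A=1\oplus f$, where $\mathcal{Z}^B=\Zor$, $\mathcal{Z}^B(\Sigma;{}_BA)=\Zor(\Sigma;A)$, and the claims become $\bigoplus_\sigma\Zspin(\Sigma,\sigma)=\Zor(\Sigma;A)$ and $2^{-b_0(M^\textsf{cl})-b_0(\Sigma)}\sum_s\Zspin(M,s)=\Zor(M)$, i.e.\ the direct generalisation of \cite[Thm.\,15.3]{BM96}.

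The core mechanism is the following. Given a bordism with $A$-skeleton $G_s$ representing a (singular) spin structure $s$, decompose every $A$-ribbon as $1\oplus f$. Since $\mu_{ff}^f=0=\mu_{11}^f$ and dually, the $f$-coloured strands meet every coupon an even number of times, hence form a $\mathbb{Z}_2$-$1$-cycle $c$ on the $1$-skeleton; thus $\Zor(M;G_s)=\sum_{c\in Z_1}\varepsilon_s(c)\,\Zor(M;G^0,f\text{ along }c)$, where $G^0$ carries only the $1$-coloured skeleton and $\varepsilon_s(c)$ collects the twist signs $\theta_f=-1$ picked up along $c$ from the framing encoding $s$. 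The geometric input is that a spin structure $s$ defines a quadratic refinement (in the $3$-manifold case, simply a linear functional) of the mod-$2$ intersection pairing on $H_1$, and that $\varepsilon_s(c)$ depends on $c$ and $s$ only through $[c]$, equalling $(-1)^{q_s([c])}$ up to an $s$-independent factor which, together with $f\otimes f\cong 1$, $\dim f=-1$ and Lemma~\ref{lem:loop_removal} applied to null-homologous pieces inside balls, shows that $\Zor(M;G^0,f\text{ along }c)$ depends only on $[c]$; absorbing all this into operators $D_h$ one gets $\Zor(M;G_s)=\sum_{h\in H_1}(-1)^{q_s(h)}D_h$.

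For part~(1) I would apply this on $M=\Sigma\times I$ with the product spin structure, so that each idempotent $e_\sigma:=\Zor(\Sigma\times I;G_{s_0(\sigma)})$ becomes $\sum_{h\in H_1(\Sigma;\Zb_2)}(-1)^{\langle\sigma-\sigma_0,h\rangle}D_h$ (the quadratic form degenerating to the linear pairing on a cylinder), and the $D_h$ are, in a fixed normalisation, the $f$-loop operators $Z_h$ on $\Zor(\Sigma;A)=\mathcal{Z}^B(\Sigma;{}_BA)$ with $Z_0$ proportional to $\id$ and $Z_hZ_{h'}\propto Z_{h+h'}$ (using $c_{f,f}=\id$). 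Thus, suitably normalised, the $e_\sigma$ are precisely the isotypic projectors for the action of the loop-operator group $H_1(\Sigma;\Zb_2)$ on $\Zor(\Sigma;A)$, indexed by the characters $h\mapsto(-1)^{\langle\sigma-\sigma_0,h\rangle}$; since spin structures exhaust these characters and $\mathrm{char}\,k\neq 2$, orthogonality of characters gives both that the $e_\sigma$ are mutually orthogonal (for $e_\sigma e_{\sigma'}$ one uses that the glued skeleton $G_{s_0(\sigma)}\cup G_{s_0(\sigma')}$ carries the framing-mismatch cocycle representing $\sigma-\sigma'\neq 0$, a nontrivial character) and that their images sum to all of $\Zor(\Sigma;A)$. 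This is the internal direct sum claimed, and it is compatible with the embeddings into $\Zor(\Sigma;(A\otimes B)^{\{p_i\}})$ through the iso $\Zor(\Sigma;A)\cong\Zor(\Sigma;A\otimes B)$ fixed in the text.

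For part~(2) I would write $\sum_{s}\Zspin(M,s)=\pi_{C_{P'}}\circ\big(\sum_{s}\Zor(M;G_s(E,C_P,C_{P'}))\big)\circ\iota_{C_P}$ and apply the same expansion, now summing $(-1)^{q_s(h)}$ over all spin structures of $M$ relative to the marked points. By the exact sequence of Remark~\ref{rem:spin-compose} this is a Gauss sum over an $H^1(Fr_M,\{p_i\};\Zb_2)$-torsor; character orthogonality collapses it onto the $f$-null-homologous classes, i.e.\ onto $\mathcal{Z}^B(M)$, while the kernel of $H^1(Fr_M,\{p_i\};\Zb_2)\to H^1(Fr_M;\Zb_2)$ together with the central fermion-parity $\Zb_2$ acting trivially on each closed component contributes an overcounting factor of exactly $2^{b_0(M^\textsf{cl})+b_0(\Sigma)}$, giving the stated normalisation; the case $E\neq\emptyset$ is identical, the line-defect colours entering only as spectators. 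The main obstacle throughout is bookkeeping rather than concept: checking that the scalar factors hidden in $\Delta$-separability, haploidness and $\dim f=-1$ conspire so that $D_h$ truly depends only on $[h]$ and the $e_\sigma$ come out idempotent, pinning down the precise identification of $s$ with its quadratic form and its compatibility with the $\theta_f=-1$-weighted skein sum, and counting exactly the spin structures acting trivially so as to reproduce $2^{-b_0(M^\textsf{cl})-b_0(\Sigma)}$ and no other power of two.
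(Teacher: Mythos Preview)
Your argument is correct in outline but takes a genuinely different route from the paper's proof. You first reduce to $B=1$ by passing to $\Cc_B^\textsf{loc}$, then expand each $A$-ribbon as $1\oplus f$ so that the skeleton becomes a sum over $\Zb_2$-$1$-cycles weighted by spin-dependent signs, and finally invoke character orthogonality over $H_1(\cdot;\Zb_2)$ (with spin structures playing the role of characters) to collapse the sum. The paper instead works directly with general $A$ and sums over all $\Zb_2^{|E|}$ twist-insertions on the edges of a fixed skeleton: the identity $\id_A+N_A=2\,p_B$ immediately turns this into $2^{|E|}$ times a $B$-coloured skeleton, i.e.\ $\mathcal{Z}^B$; Corollary~\ref{cor:even_loop_vanishes} kills exactly the non-spin framings, and a direct count via $\dim H^0(M,\{p_i\};\Zb_2)=b_0(M^\textsf{cl})$ gives the multiplicity $2^{|V|-b_0(M^\textsf{cl})-b_0(\Sigma)-b_0(\Sigma')}$ and hence the normalisation. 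So the two approaches are Fourier-dual: you sum over $1$-cycles (the ``matter'' side) and then over spin structures; the paper sums over $1$-cochains (the ``framing'' side) and reads off $B$ directly. The paper's route is shorter and avoids the sign-tracking you flag as the main obstacle, since $\id_A+N_A=2p_B$ bypasses any need to identify $\varepsilon_s(c)$ with a quadratic refinement; your route is more conceptual and makes the role of $H_1$ and character orthogonality explicit, which pays off if one later wants to generalise beyond $\Zb_2$.
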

\begin{proof}
    For the state spaces it suffices to check the statement on a connected surface.

    Note that $A$ satisfies $\mu\circ(N_A\otimes\id_A)\circ\Delta = 0$ due to Corollary~\ref{cor:even_loop_vanishes}. Thus all projectors onto the different spin state spaces are orthogonal and the sum of the $\Zspin(\Sigma,\sigma)$ can be identified with a subspace of $\Zor(\Sigma;A)$. Summing over all these projectors then gives the space $\mathcal{Z}^B(\Sigma;{}_BA)$: On all parts of the ${}_BA$-defect network except the part along the $I$-direction the sum will be over insertions of $N_A^\nu$ which is just (twice) the projector to $B$ on each edge. Due to Corollary~\ref{cor:even_loop_vanishes} only assignments of $N_A^\nu$ corresponding to a spin structure contribute, and the terms for a given $\sigma$ sum up to some non-zero multiple of the corresponding projector onto $\Zspin(\Sigma,\sigma,p)$. Along the $I$-direction there is no sum and we are just left with a single ${}_BA$ line in each connected component. Composing with the isomorphisms to and from $\Zor(\Sigma;A,B)$ then gives the identification.

    For the morphisms we begin by taking a 1-skeleton with an arbitrary framing $fr$ and converting it to a defect graph $G_{fr}\in\mathcal{R}\textsf{Frames}(fr;\{p_i\})$ as in Definition~\ref{def:framing_ribbon_class}. Let $E$ be the set of edges (without the ribbons extending to the boundary) of that graph. When we take the sum over all possible ways to insert an extra twist on the edges we obtain that each edge obtains a map $\id + N_A$, which is twice the projector onto the even part of $A$, i.e.\ $B$. The insertions on the edges will give a factor of $2^{|E|}$. The vertices, which now act as maps $B^{\otimes \ell}\to B^{\otimes k}$, each contribute a factor of $2^{-k+1}$ when adjusting the comultiplications and counits to the ones of $B$ as in Remark~\ref{rem:symmetric_subalgebra_rescaling}. On the ribbons to the boundary we can insert a projector for free, since the maps $\mu_{BB}^F$ and $\Delta_F^{BB}$ are zero. Note that we get an additional factor of $\frac{1}{2}$ for each outgoing boundary component, since these come from an additional comultiplication. Summing these up leaves us with a total factor of $2^{|V|-b_0(\Sigma')}$, where $V$ is the set of vertices.

    Only the framings corresponding to valid spin structures contribute to these sums. The argument is exactly the one of Lemma~\ref{lem:loop_removal}, except that we have the other framing on the loops we want to contract to reduce everything to the case $\mu\circ(N_A\otimes\id_A)\circ\Delta=0$.

    It remains to calculate the multiplicity with which each spin structure appears in the sum. The difference between two spin structures is described by a class in $H^1(M,\{p_i\};\Zb_2)$ and so we will have to count assignments $\nu\in\Zb_2^{|E|}$ of elements of $\Zb_2$ to one cells corresponding to the zero class in $H^1(M,\{p_i\};\Zb_2)$. These are determined by the image of the differential map $\partial_1^*:\hom(\Zb^{|V|},\Zb_2)\to\hom(\Zb^{|E|},\Zb_2)$ of the cellular cohomology of $M$, where we restrict to those cochains that vanish on the $\{p_i\}$. By definition the dimension (over $\mathbb{F}_2$) of the kernel is the dimension of $H^0(M,\{p_i\};\Zb_2)$, which is given by the number of closed connected components of $M$, $b_0(M^\textsf{cl})$. As $\dim\hom(\Zb^{|V|},\Zb_2)\big|_{c(p_i)=0} = V-b_0(\Sigma)-b_0(\Sigma')$ the image has dimension $|V|-b_0(M^\textsf{cl})-b_0(\Sigma)-b_0(\Sigma')$. Thus each spin structure appears $2^{|V|-b_0(M^\textsf{cl})-b_0(\Sigma)-b_0(\Sigma')}$ times, and cancelling the factor of $2^{|V|-b_0(\Sigma')}$ on each side we arrive at the splitting.
\end{proof}

\begin{corollary}
    If $A$ is of type $1\oplus f$ then $\Zspin$ is a spin-refinement of $\Zor$.
\end{corollary}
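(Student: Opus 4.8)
The plan is to obtain this as a direct specialisation of Theorem~\ref{thm:spin_refinement}: it suffices to check that for $A$ of type $1\oplus f$ the symmetric subalgebra $B$ extracted in Theorem~\ref{thm:simple_commutative_frobenius_algebras} is the trivial Frobenius algebra on the monoidal unit, so that the auxiliary theory $\mathcal{Z}^B$ coincides with $\Zor$.

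First I would make the splitting $A=B\oplus F$ explicit. Since $\theta_1=\id_1$, $\theta_f=-\id_f$, and $\hom(1,f)=\hom(f,1)=0$ by naturality of the twist, the twist $\theta_A$ is diagonal with respect to $A=1\oplus f$ with eigenvalue $+1$ on $1$ and $-1$ on $f$. Hence the idempotents $p_\pm=\tfrac12(\id_A\pm\theta_A)$ from the proof of Theorem~\ref{thm:simple_commutative_frobenius_algebras} are precisely the projectors onto $1$ and onto $f$, so $B\cong 1$ and $F\cong f$; in particular $\theta_A\neq\id_A$, so $A$ is not symmetric and Theorem~\ref{thm:spin_refinement} is applicable. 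Moreover $\dim_\Cc B=\dim_\Cc 1=1$, and by Theorem~\ref{thm:simple_commutative_frobenius_algebras} $\dim_\Cc B=\tfrac12\,\epsilon\circ\eta$, so $\epsilon\circ\eta=2$. Together with $\mathrm{End}_\Cc(1)=k\,\id_1$, the rescaled structure maps of Remark~\ref{rem:symmetric_subalgebra_rescaling} then make $B\cong 1$ the trivial commutative symmetric $\Delta$-separable special Frobenius algebra, with $\epsilon_B\circ\eta_B=\id_1$.

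Next I would argue $\mathcal{Z}^B=\Zor$. The theory $\mathcal{Z}^B$ is built by inserting $B$-coloured ribbon graphs $G_s$ (coupons labelled by the maps $\mu_{k,\ell}$) into bordisms and evaluating under $\Zor$, and $\Zor$ respects $\Cc$-skein relations. For $B$ the trivial Frobenius algebra on the unit, any such $B$-coloured ribbon graph evaluates under $\Fc^\textsf{RT}_\Cc$ to the identity on its boundary data: a $1$-coloured strand is invisible, a $1$-coloured circle has dimension $1$, and the specialness constants attached to the $\mu_{k,\ell}$ cancel over any connected component since $\epsilon_B\circ\eta_B=\id_1$. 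Hence the graph is $\Cc$-skein equivalent to the empty graph, and since a $1$-coloured puncture on a surface carries no information this yields canonical, composition-compatible identifications $\mathcal{Z}^B(\Sigma;{}_BA^{\{p_i\}})=\Zor(\Sigma;A^{\{p_i\}})$ and $\mathcal{Z}^B(M)=\Zor(M)$; in particular the state-space idempotents defining $\mathcal{Z}^B$ are identities.

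Finally I would substitute $B=1$, $F=f$ and $\mathcal{Z}^B=\Zor$ into the two assertions of Theorem~\ref{thm:spin_refinement}. Using $(A\otimes B)^{\{p_i\}}=A^{\{p_i\}}$, ${}_BA=A$, and that $B^{\{p_i\}}$-coloured punctures are no punctures, the theorem becomes
\[
    \bigoplus_{\sigma\in\mathrm{Spin}(\Sigma;\{p_i\})}\Zspin(\Sigma,\sigma,\{p_i\})=\Zor(\Sigma;A^{\{p_i\}})
\]
as subspaces of $\Zor(\Sigma;A^{\{p_i\}})$, and
\[
    2^{-b_0(M^\textsf{cl})-b_0(\Sigma)}\sum_{s\in\mathrm{Spin}(M;\{p_i,p_i'\})}\Zspin(M,s)=\Zor(M)
\]
as morphisms $\Zor(\Sigma)\to\Zor(\Sigma')$, which is exactly the statement that $\Zspin$ is a spin refinement of $\Zor$. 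The one point that is not purely formal is the identification $\mathcal{Z}^B=\Zor$ --- that is, checking that the unit-coloured skeleton contributes no spurious normalisation constant; this is the (minor) main obstacle, and it is handled by the cancellation argument above together with $\epsilon\circ\eta=2$.
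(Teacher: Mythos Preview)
Your proposal is correct and follows exactly the intended route: the paper states this as an immediate corollary of Theorem~\ref{thm:spin_refinement}, the point being that for $A=1\oplus f$ the symmetric subalgebra is $B\cong 1$ (with the rescaled structure maps of Remark~\ref{rem:symmetric_subalgebra_rescaling} reducing to the trivial Frobenius algebra on the unit), so that $\mathcal{Z}^B=\Zor$. Your careful check that $\epsilon\circ\eta=2$ and hence that the unit-coloured skeleton contributes no spurious normalisation is the only nontrivial detail, and it is handled correctly.
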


\begin{remark}
    In some sense the corollary is already general. By the previous theorem and with view towards Remark~\ref{rem:symmetric_subalgebra_rescaling} we may interpret the gauging of $A$ as the successive (oriented) gauging by $B$ and (spin) gauging by $A$, seen as a local $B$ module.
\end{remark}

\begin{example}
    We will give an example to illustrate the significance of the normalisation $2^{-b_0(M^\textsf{cl})-b_0(\Sigma)}$. First consider the case of $S^3$: Here the skeleton can always be contracted and we get $\mathcal{Z}^B(S^3) = \epsilon_B\circ\eta_B\cdot\Zor(S^3)=\dim B\cdot\Zor(S^3)$. If we gauge by $A$ however we find $\Zspin(S^3,s_{S^3}) = \epsilon\circ\eta\cdot\Zor(S^3)=2\dim B\cdot\Zor(S^3)$. We see that the normalisation by $2^{-b_0(M^\textsf{cl})}$ is due to the difference in (twisted) dimension of $A$ and $B$. The other term is explained by compatibility with gluing: Consider as $M_1=\bigsqcup_{i=1}^N B^3$ a collection of $N$ 3-balls with all boundaries oriented outward. There is only one relative spin structure on each ball and both $b_0(M_1^\textsf{cl})$ and $b_0(\partial_- M_1)$ are zero. We thus have $\mathcal{Z}^B(M_1) = \Zspin(M_1,s_0)$, where $s_0$ is the unique spin structure on $M_1$. Consider then $M_2=S^3\setminus \bigsqcup_{i=1}^N B^3$, a 3-sphere with $N$ 3-balls removed and boundaries oriented to be incoming. We still have $b_0(M_2^\textsf{cl})=0$, but now $b_0(\partial_- M_2)=N$. There further are $2^{N-1}$ different relative spin structures on $M_2$. We thus find $\mathcal{Z}^B(M_2)= 2^{-N}\sum_s \Zspin(M_2,s)$. After gluing $M_2\circ M_1$ the resulting manifold is just $S^3$, which should yield the normalisation $2^{-1}$. We calculate $\mathcal{Z}^B(S^3) = \mathcal{Z}^B(M_2)\circ\mathcal{Z}^B(M_1) = 2^{-N}\sum_s \Zspin(M_2\circ M_1, s\circ s_0) = 2^{-1}\Zspin(S_3,s_{S^3})$, where the last step follows from the fact that the gluing of any of the $2^{N-1}$ different spin structures on $M_2$ with the one on $M_1$ will yield the unique spin structure on $S^3$. We thus see that the factor $2^{-b_0(\Sigma)}$ adjusts for the multiplicity in the gluing of relative spin structures.
\end{example}

\section{Applications}\label{sec:applications}

We will consider two applications to Reshetikhin-Turaev type theories. First, we connect our construction to the spin TFT in~\cite{Bla03, BBC17}, which in turn generalised the Kauffman skein module construction from~\cite{BM96}. Second, we explain how to recover the classification of abelian spin Chern-Simons from~\cite{BM05} in our setting.

\subsection{Spin Modular Categories and Reshetikhin-Turaev TFTs}\label{sec:spin-modular-cat}

In this section we will see that gauging spin defects in Reshetikhin-Turaev type theory will give us invariants and state space dimensions agreeing with what was obtained in~\cite{Bla03, BBC17}. There, the input datum for the construction is a so-called spin modular category. For this section we will assume that $k$ is an algebraically closed field of characteristic 0. Recall, e.g.\ from~\cite[Ch.\,8.14]{EGNO15}, that a modular fusion category is an abelian, $k$-linear, finite semi-simple ribbon category with simple tensor unit and with non-degenerate braiding.

\begin{definition}
    A \emph{spin modular fusion category} is a modular fusion category $\Cc$ together with the choice of an object $f \in \Cc$, called the \emph{fermion}, satisfying $f\otimes f\simeq 1$ and $\theta_f=-1$.
\end{definition}

\begin{remark}
The above definition is a special case ($d=2$) of a ``modular category which is modulo $d$ spin'' \cite[Def.\,2.1]{Bla03}. A related notion where $f$ is a transparent object appeared in \cite[Thm.\,2]{Saw02}, corresponding to the subcategory $\Cc_0 \subset \Cc$ introduced below. More on spin modular categories with the additional assumption of unitarity can be found in \cite{BGHNPRW17}.
\end{remark}

\medskip

Let $\Cc$ be a spin modular category. Recall that an invertible object $x$ in ribbon category ($k$-linear and with absolutely simple tensor unit) satisfies $\dim(x) c_{x,x} = \theta_x$. For the fermion $f \in \Cc$ this implies that $c_{f,f}=1$ iff $\dim f = -1$.

The object $1\oplus f$ will always be a $\Delta$-separable Frobenius algebra, and it will be commutative if $c_{f,f}=1$, i.e.\ if $\dim f = -1$. If the dimension is $1$, we can instead take the object $f\boxtimes K^{-}\in \Cc\boxtimes\sVect$. 
Here, $\sVect$ is the ribbon category of finite-dimensional super-vector spaces over $k$ and we denote its simple objects by $K^+ = k^{1|0}$ and $K^- = k^{0|1}$. The ribbon structure on $\sVect$ is such that $K^-$ has trivial twist and quantum dimension $-1$, i.e.\ the quantum dimension is the super-dimension $\mathrm{sdim}(k^{m|n})=m-n$.
For notational convenience we will write $\widehat\Cc$ for either $\Cc$ or $\Cc\boxtimes\sVect$ and $\hat f$ for either $f$ or $f\boxtimes K^-$, depending on the dimension of $f$. We now take
\[
    A=1\oplus\hat f ~\in \widehat\Cc \ ,
\]
which by Proposition~\ref{prop:line_defects_for_1f} implies that $\ALineCat\simeq\widehat\Cc$.
This of course requires us to modify the Reshetikhin-Turaev TFT appropriately so that it can accept lines coloured in $\Cc\boxtimes\sVect$. To do so we can tensor the theory with the trivial $\sVect$-valued TFT, see~\cite[Sec.\,4]{RSW23} for details. For us the relevant upshot is that the defect lines may now be labelled in $\widehat\Cc$.

\begin{remark}
    Note that gauging algebras of the form $1\oplus\hat f$ will generate all spin theories accessible by gauging line defects in Reshetikhin-Turaev theories: By Theorem~\ref{thm:simple_commutative_frobenius_algebras} we can always see $A$ as a sum of this kind in an appropriate module category $\Cc_B^\textsf{loc}$. Due to the results of~\cite{CMRSS21} we know that gauging by $B$ will yield a Reshetikhin-Turaev type theory with defects coloured in $\Cc_B^\textsf{loc}$. The theory gauged by $A$ is then obtained by gauging $1\oplus\hat f$ in the Reshetikhin-Turaev theory for $\Cc_B^\textsf{loc}$.
\end{remark}

\medskip
We start with analysing the dimension of the state spaces. As we have seen in Theorem~\ref{thm:spin_refinement}, the state space of a genus $g$ surface $\Sigma_g$ without punctures and with spin structure $\sigma$ is a subspace of the state space of the corresponding oriented surface with a single $A$-puncture. We thus have two contributions to the overall dimension, one where the puncture is labelled $1$, which we call $\dim_+$, and one where the label is $\hat f$, which we call $\dim_-$:
\[
    \dim(\Zspin(\Sigma_g,\sigma)) \,=\, \dim_+(\Zspin(\Sigma_g,\sigma)) + \dim_-(\Zspin(\Sigma_g,\sigma))\,.
\]
Note that in the case of $\hat f = f\boxtimes K^-$, the dimension of $\dim_+$ will be the dimension of the even graded part of the state space and $\dim_-$ will be the dimension of the odd-graded part.

In~\cite{Bla03} a spin refinement of the dimension formula for the state spaces of Reshetikhin-Turaev theories coming from spin modular categories was computed. In our setting, the formula in~\cite{Bla03} computes $\dim_+$, and we extend this computation by also giving $\dim_-$. In preparation, we recall some basics about spin modular categories and refer to ~\cite{Bla03,BBC17} for details.

A spin modular category has a faithful grading induced by the braiding with $f$. $\Cc_0$ consists of those objects transparent to $f$ and $\Cc_1$ of those for which the double braiding with $f$ is given by $-1$. The Kirby colour $\omega = \sum_{\lambda\in Irr(\Cc)} \dim\lambda\cdot\lambda\in \mathcal{K}_0(\Cc)$ also splits as $\omega=\omega_0 + \omega_1$ by restricting the sum to $\Cc_0$ and $\Cc_1$ respectively.

Denote by $\mathcal{D} = \sqrt{\sum_\lambda (\dim \lambda)^2}$ the choice of square root of the categorical dimension of $\Cc$ used in the definition of the Reshetikhin-Turaev TFT.

\begin{lemma}[\cite{Bla03}, Lem.\,3.6\,\&\,3.7]\label{lem:graded_projectors}
    Let $\lambda$ be a simple element of $\Cc$. Choose a basis of $\hom(\lambda,\lambda\otimes (1 \oplus f))$ and a corresponding dual basis of $\hom(\lambda\otimes(1 \oplus f),\lambda)$ with respect to the trace pairing. Then
    \[
        \includegraphics[valign=c]{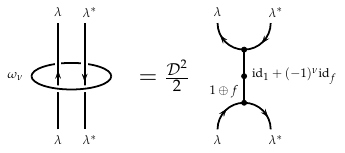} \qquad \text{and if $\lambda\otimes f\simeq\lambda$ then} \qquad \includegraphics[valign=c]{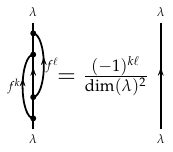}\ ,
    \]
    where unlabelled pairs of dots denote a sum over the chosen basis elements.
\end{lemma}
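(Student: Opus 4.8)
The plan is to reduce everything to the behaviour of the simple object $\lambda$ under tensoring with $f$, splitting into the two cases according to whether $\lambda\otimes f\simeq\lambda$. Since $f$ is invertible with $f\otimes f\simeq1$, the object $\lambda\otimes(1\oplus f)$ is $\lambda\oplus(\lambda\otimes f)$ with $\lambda\otimes f$ again simple, so $\hom(\lambda,\lambda\otimes(1\oplus f))$ is one-dimensional when $\lambda\otimes f\not\simeq\lambda$ and two-dimensional when $\lambda\otimes f\simeq\lambda$ (the ``fixed-point'' case). In either case semisimplicity together with $\mathrm{End}_\Cc(1)=k$ makes the trace pairing between $\hom(\lambda,\lambda\otimes(1\oplus f))$ and $\hom(\lambda\otimes(1\oplus f),\lambda)$ non-degenerate, so a dual basis exists. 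First I would fix such a basis explicitly, taking the inclusion $\lambda=\lambda\otimes1\hookrightarrow\lambda\otimes(1\oplus f)$ as one element and, in the fixed-point case, an additional generator $\psi\colon\lambda\to\lambda\otimes f$ coming from a chosen isomorphism $\lambda\otimes f\simeq\lambda$, normalised against its trace-dual.

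For the first identity I would run the standard encircling argument. Dragging an $f$-loop around a simple object realises the monodromy of $f$ with it, which by definition of the $\Zb_2$-grading is $+1$ on $\Cc_0$ and $-1$ on $\Cc_1$; modularity is exactly what guarantees that this grading is faithful and that $f$ genuinely separates the two components. Expanding $\id_{\lambda\otimes(1\oplus f)}$ by dominance as a sum of basis/dual-basis bubbles, resolving the $f$-strand linking $\lambda$ via the invertible-object relation $\dim(x)\,c_{x,x}=\theta_x$ (hence via $\dim f=-1$ and $\theta_f=-1$), and combining this with the trace-pairing normalisation of the dual basis — which is arranged so that $\dim\lambda\cdot\sum_i b_i\circ b^i$ is an idempotent — turns the relevant bubble into precisely the asserted graded projector attached to $\lambda$, with the overall scalar read off from these normalisations.

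For the second identity the content is that, in the fixed-point case, the extra generator $\psi\colon\lambda\to\lambda\otimes f$ can be slid past the fusion coupons at the cost of a sign. I would compare $(\psi\otimes\id)$ composed with the fusion through $\lambda\otimes f$ against the fusion through $\lambda$ followed by $\psi$, using only naturality of the braiding and twist together with $f\otimes f\simeq1$; the two differ by a scalar which, after normalising $\psi$ against its trace-dual, is forced to be $\pm1$, and $\theta_f=-1$ (with $\dim f=-1$) pins down the sign. This is, in effect, the Frobenius--Schur-type invariant of an $f$-fixed simple, and it is the reason the stated diagram carries the coefficient it does.

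The step I expect to be the main obstacle is the bookkeeping in the fixed-point case: one must choose $\psi$ and its trace-dual compatibly, track how the endomorphism of $\lambda$ obtained by composing $\psi$ with itself through $\lambda\otimes f\otimes f\simeq\lambda$ depends on the chosen isomorphism $f\otimes f\simeq1$, and verify that the scalars produced by the trace pairing, by $\dim f=-1$, and by $\theta_f=-1$ combine to give exactly the coefficients appearing in the two displayed diagrams. Everything else — the dominance expansion, the encircling computation, and the appeal to semisimplicity and modularity for the faithful grading — should be routine once the graphical conventions are fixed, essentially as in \cite{Bla03} (see also \cite{EGNO15}).
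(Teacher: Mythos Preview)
The paper does not give its own proof of this lemma: it is stated with attribution to \cite[Lem.\,3.6\,\&\,3.7]{Bla03} and followed only by a short clarifying remark on how to read the right-hand sides. There is therefore nothing in the paper to compare your attempt against.

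That said, your outline is broadly the right shape and matches the argument in \cite{Bla03}: the first identity is a dominance/encircling computation using that the double braiding with $f$ acts by $\pm1$ according to the $\Zb_2$-grading, and the second identity is the Frobenius--Schur-type sign attached to an $f$-fixed simple. Two small cautions. First, you repeatedly invoke $\dim f=-1$, but the lemma (both here and in \cite{Bla03}) is stated for an arbitrary spin modular category, where $\dim f$ may be $+1$ or $-1$; neither identity depends on this choice, so your argument should not either. Second, the invertible-object relation $\dim(x)\,c_{x,x}=\theta_x$ concerns the \emph{self}-braiding $c_{f,f}$, whereas the encircling step uses the \emph{monodromy} $c_{\lambda,f}\circ c_{f,\lambda}$ of $f$ around $\lambda$; the latter is $\pm\id_{f\otimes\lambda}$ by definition of the grading and is what you actually need. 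Once those two points are straightened out, the bookkeeping you flag in the fixed-point case (normalising $\psi$ against its trace-dual and chasing the scalar through $f\otimes f\simeq1$) is indeed the only genuine work, and it is carried out in \cite{Bla03}.
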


Note that the map inserted on the right hand side of the first equation can also be written as the twist $\theta^\nu$, and that the $f$-component is only present when $\lambda$ is an $f$ fixed point. In the second equation the sign is always positive, unless both $k$ and $\ell$ are odd, i.e.\ unless both arcs are coloured by $f$ (rather than $1$).
\begin{proposition}\label{prop:spin-RT-state-spaces}
    The dimensions of the state space of a surface $\Sigma_g$ of genus $g$ with spin structure $\sigma$ without punctures is given by, for $\varepsilon \in \{ \pm 1\}$,
    \[
        \dim_\varepsilon(\Zspin(\Sigma_g,\sigma)) = \mathcal{D}^{2g-2}\epsilon^{\frac{1-\dim f}{2}}\sum_{\lambda\in Irr(\Cc)}\varepsilon^{\deg\lambda}\dim(\lambda)^{2-2g}
            \begin{cases}
                \frac{1}{4^g}, & \lambda\not\simeq\lambda\otimes f,\\
                (-1)^{\mathrm{Arf}(\sigma)}\frac{1}{2^g}, & \lambda\simeq\lambda\otimes f.
            \end{cases}
    \]
\end{proposition}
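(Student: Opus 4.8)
The plan is to turn the dimension into a trace, realise that trace as a Reshetikhin--Turaev invariant of a decorated mapping torus, and evaluate it by the skein calculus that produces the $\dim_+$ part in \cite{Bla03}, carrying along one extra projector so as to also reach $\dim_-$. First I would use that $\dim(\im p)=\mathrm{tr}(p)$ for an idempotent $p$ on an object of $\TargetCat$ (a signed statement in $\sVect$: for a projector onto a purely odd object the honest dimension is minus the super-trace). By definition $\Zspin(\Sigma_g,\sigma)=\im(e_\sigma)$, where $e_\sigma:=\Zor(\Sigma_g\times I,G_{s_0})\colon\Zor(\Sigma_g;A)\to\Zor(\Sigma_g;A)$, with $A=1\oplus\hat f$ and $G_{s_0}$ an $A$-coloured skeleton encoding the product spin structure; by Theorem~\ref{thm:spin_refinement}(1), applied with $B=1$ (the twist-$+1$ summand of $A=1\oplus\hat f$ is just $1$, so $\mathcal{Z}^B=\Zor$), the spaces $\Zspin(\Sigma_g,\sigma)$ for the various $\sigma$ fill out $\Zor(\Sigma_g;A)=\Zor(\Sigma_g;1)\oplus\Zor(\Sigma_g;\hat f)$, and $\dim_\pm$ record the parts of $\Zspin(\Sigma_g,\sigma)$ in the $1$- and $\hat f$-blocks. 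Since $N_A=\theta_A$ (Proposition~\ref{prop:twist_is_nakayama+trivialisation+cocomm}) is a Frobenius automorphism, the idempotents $p_\varepsilon:=\tfrac12(\id_A+\varepsilon\theta_A)$, $\varepsilon=\pm1$, projecting onto $1$ and $\hat f$, slide through every $\mu_{k,\ell}$-coupon of $G_{s_0}$, so $e_\sigma$ commutes with the corresponding block projections and $\dim_\varepsilon(\Zspin(\Sigma_g,\sigma))$ is, up to a sign that is nontrivial only for $\varepsilon=-1$ in the $\sVect$ case, the super-trace of $e_\sigma\,p_\varepsilon$. Closing up the cylinder, this super-trace equals the value of $\Zor$ (in the $\widehat\Cc$-extended theory) on $\Sigma_g\times S^1$ carrying the closed-up skeleton $G_{\hat\sigma}$ encoding the spin structure $\hat\sigma$ induced from $s_0$, with one insertion of $p_\varepsilon$ on the $S^1$-wrapping $A$-strand; by Propositions~\ref{prop:A_graph_equiv_class} and~\ref{prop:zspin_is_ribbon} this does not depend on the skeleton, so $G_{\hat\sigma}$ may be chosen adapted to any convenient decomposition of $\Sigma_g\times S^1$.

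Next I would present $\Sigma_g\times S^1$ by surgery on a framed link in $S^3$ adapted to a symplectic basis $a_1,b_1,\dots,a_g,b_g$ of $H_1(\Sigma_g;\Zb_2)$ (equivalently, cut along the tori of a pair-of-pants decomposition and sum over Kirby-coloured core circles). Then $\Zor(\Sigma_g\times S^1;G_{\hat\sigma},p_\varepsilon)$ becomes, up to a normalising power of $\mathcal{D}$, a sum over colourings $\vec\lambda$ of a skein evaluation in $S^3$ in which each core circle carries the Kirby colour $\omega$ of $\widehat\Cc$ and is linked by the $(1\oplus\hat f)$-arcs of $G_{\hat\sigma}$, whose numbers and framings are governed by $\sigma$ through the $\Zb_2$-invariants $\alpha_i,\beta_i$ recording whether $\sigma$ is non-bounding along $a_i$ resp.\ $b_i$. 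Collapsing these arcs onto the cores by Lemma~\ref{lem:graded_projectors} (applied in $\widehat\Cc$, together with the Frobenius structure of $A$) replaces each linking $(1\oplus\hat f)$-arc by a twist power $\theta_\lambda^{\,\nu}$ on the core --- these occur in cancelling combinations and leave no net twist --- together with, and only when $\lambda\otimes f\simeq\lambda$, a further contribution carried by the two $f$-labelled arcs crossing each handle. Running the standard Verlinde bookkeeping on the resulting skeleton --- which for the $\dim_+$ part is exactly the computation of \cite[\S3]{Bla03} --- yields the backbone $\mathcal{D}^{2g-2}\sum_\lambda(\dim\lambda)^{2-2g}$ with the case-dependent coefficients $4^{-g}$ (generic $\lambda$) and $\pm2^{-g}$ ($f$-fixed $\lambda$), the powers of $2$ arising from the $\Delta$-separability normalisations of the $\mu_{k,\ell}$-coupons exactly as in the factor count in the proof of Theorem~\ref{thm:spin_refinement}. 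The sign on the fixed-point terms comes from the second identity of Lemma~\ref{lem:graded_projectors}: for an $f$-fixed simple $\lambda$ the space $\hom(\lambda,\lambda\otimes f)$ is one-dimensional, and the two $f$-arcs at the $i$-th handle contribute a sign precisely when both are $f$-coloured, i.e.\ when $\alpha_i=\beta_i=1$, so that the product over the $g$ handles is $(-1)^{\sum_i\alpha_i\beta_i}=(-1)^{\mathrm{Arf}(\sigma)}$.

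The two ingredients beyond \cite{Bla03} are the computation of $\dim_-$ and the $\sVect$ normalisation. The extra projector $p_\varepsilon$ sits on one $A$-strand that links a single core circle once; after the collapse of Lemma~\ref{lem:graded_projectors} its $1\oplus\hat f$ content contributes, on the $\lambda$-summand, nothing for $\varepsilon=+1$ and the Hopf-pairing scalar $s_{\hat f,\lambda}=(-1)^{1+\deg\lambda}$ for $\varepsilon=-1$ (note $\dim_{\widehat\Cc}\hat f=-1$ whether $\hat f=f$ or $\hat f=f\boxtimes K^-$). Combined with the dimension-versus-super-trace sign from the first step, this assembles into the stated weight $\varepsilon^{(1-\dim f)/2}\varepsilon^{\deg\lambda}$, which is $1$ for $\varepsilon=+1$, equals $(-1)^{\deg\lambda}$ for $(\varepsilon,\dim f)=(-1,+1)$ and $(-1)^{1+\deg\lambda}$ for $(\varepsilon,\dim f)=(-1,-1)$. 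I expect the principal obstacle to be exactly this sign and normalisation bookkeeping: one has to fix the surgery/skeleton presentation so that its $\Zb_2$-decoration corresponds to $\sigma$ under the chosen symplectic basis, check that the framing conventions of the translation of framed $1$-skeleta into ribbon graphs (Definition~\ref{def:framing_ribbon_class}) are compatible with that identification, and make sure that the $f$-fixed-point contributions assemble into the quadratic form $(-1)^{\mathrm{Arf}(\sigma)}$ rather than some affine-linear modification of it. A convenient sanity check is that summing the formula over all $2^{2g}$ spin structures must return $\dim\Zor(\Sigma_g;A)=\mathcal{D}^{2g-2}\sum_\lambda(\dim\lambda)^{2-2g}\bigl(1+\dim(f)(-1)^{\deg\lambda}\bigr)$, and that for $g=0,1$ (where $4^{-g}=2^{-g}$ and $(-1)^{\mathrm{Arf}}=1$) it reduces to the known dimensions of $\Zor(\Sigma_g;1)$ and of the $\hat f$-sector; the rest is a routine, if lengthy, unwinding of the Verlinde formula.
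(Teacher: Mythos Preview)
Your approach is essentially the same as the paper's: compute $\dim_\varepsilon$ as the (suitably sign-adjusted) trace of the defining idempotent, realise this as $\Zor$ evaluated on $\Sigma_g\times S^1$ with the $A$-skeleton inserted, pass to a surgery presentation, unlink the $A$-graph from the surgery link so that only the graded Kirby colours $\omega_{a_i},\omega_{b_i}$ determined by the spin structure survive, and then feed the result into the computation of \cite[Thm.\,3.3]{Bla03} via Lemma~\ref{lem:graded_projectors}, picking up $(-1)^{\sum a_ib_i}=(-1)^{\mathrm{Arf}(\sigma)}$ from the second identity there. Your treatment of the $\varepsilon$-dependence (insert $p_\varepsilon$ on the $S^1$-wrapping $A$-strand and read off a Hopf-pairing factor) is equivalent to what the paper does: it absorbs the projected $A$-loop into the extra surgery component, which amounts to colouring that component by $\omega_0+\omega_1$ for $\varepsilon=+1$ and by $\omega_0-\omega_1$ for $\varepsilon=-1$, with the leftover $\dim\hat f=-1$ and the $\sVect$ honest-dimension correction combining into $\varepsilon^{(1-\dim f)/2}$ exactly as you found.

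Two small points where your write-up is imprecise. First, the step that converts the $A$-skeleton linking the surgery components into graded Kirby colours is not Lemma~\ref{lem:graded_projectors} but the elementary identity $c_{A,\omega}\circ c_{\omega,A}=\id_{\omega_0}\otimes\id_A\oplus\id_{\omega_1}\otimes N_A$; Lemma~\ref{lem:graded_projectors} is applied afterwards, to the resulting $\omega_{a_i},\omega_{b_i}$-coloured special surgery graph. Second, the factors $4^{-g}$ and $2^{-g}$ do not come from ``$\Delta$-separability normalisations of the $\mu_{k,\ell}$-coupons'' (the $A$-graph has already been contracted to a scalar at that stage); they arise in the Blanchet computation itself, from $\dim\omega_\nu=\tfrac12\mathcal{D}^2$ and from whether $\hom(\lambda,\lambda\otimes(1\oplus f))$ is one- or two-dimensional. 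Neither point is a genuine gap: once you replace those attributions, the argument goes through verbatim and matches the paper's proof.
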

\begin{proof}
    \begin{figure}
        \centering
        \includegraphics[width=\textwidth]{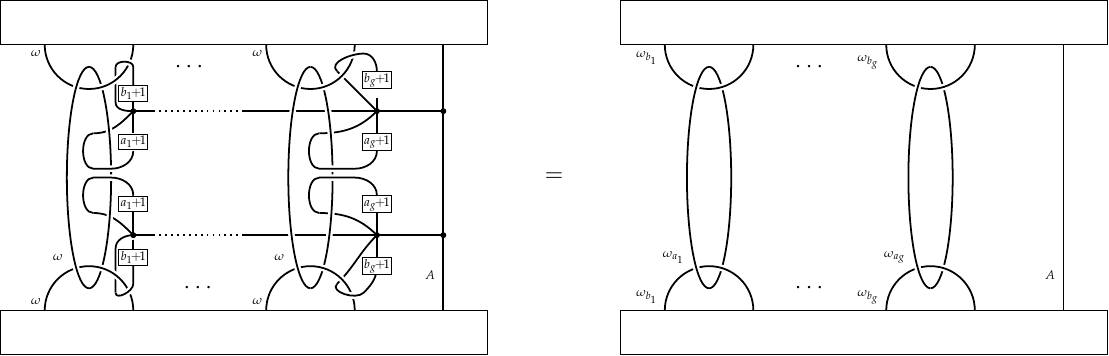}
        \caption{On the left hand side is (up to constant factors of powers of $\Dc$) the projector for the state space of $\Sigma_g$. Shown is a slight rewriting of the ribbon graph corresponding to the 1-skeleton with only one 0-cell, which arises by pulling apart the (co)multiplications which should be inserted at the 0-cell. The dots on $A$-coloured lines are the appropriate compositions of the remaining (co)multiplications, while the boxes indicate insertions of the Nakayama with powers $a_i+1$ or $b_i+1$. On the right hand side we see what remains after contracting the $A$-skeleton as far as possible. The resulting graph (without the $A$-ribbon) is also called the special surgery graph for $\Sigma_g \times I$. The colourings of the special surgery graph are projected onto the graded Kirby colours according to the values of $a_i$ and $b_i$.}
        \label{fig:state_space_projector}
    \end{figure}
    \begin{figure}
        \centering
        \includegraphics{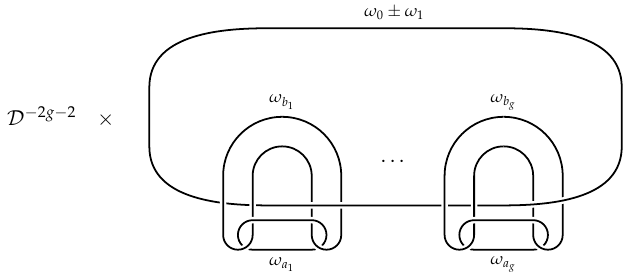}
        \caption{The ribbon graph representing the trace over the even ($\omega_0+\omega_1$) and odd ($\omega_0-\omega_1$) parts of the projector.}
        \label{fig:s1_times_surface}
    \end{figure}
    \begin{figure}
        \centering
        \includegraphics{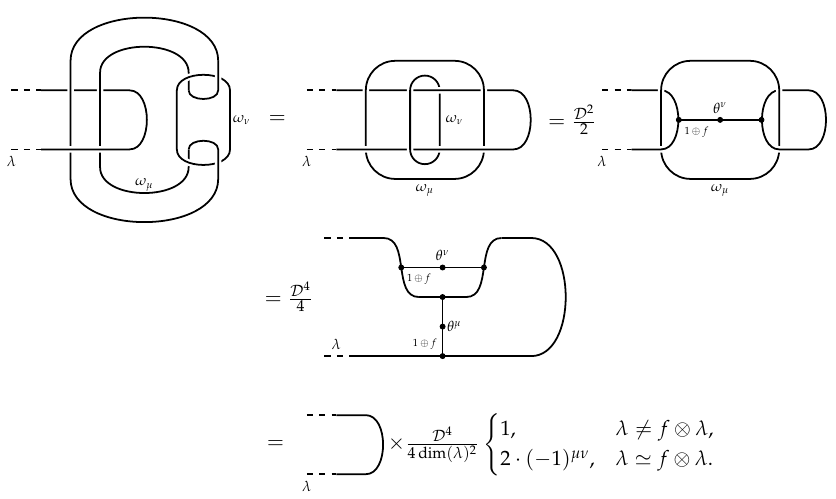}
        \caption{A simple object $\lambda$ linking part of the special ribbon graph of $\Sigma_g\times S^1$. The calculation uses the first part of Lemma~\ref{lem:graded_projectors} for the second and third equality and the second part for the fourth.}
        \label{fig:state_space_calc}
    \end{figure}
    Take the standard ribbon graph giving the surgery presentation of $\Sigma_g\times I$ and take with it the one-skeleton of $\Sigma\times I$ with only one 0-cell, as shown on the left hand side of Figure~\ref{fig:state_space_projector}. Given a spin structure on $\Sigma_g$ we can express this spin structure in terms of the symplectic basis, see e.g.~\cite{Bel98}. A spin structure on a genus $g$ surface is then a $2g$-tuple $(a_i,b_i)_{i=1,\dots,g}\in {(\Zb_2^2)}^g$, where each coefficient describes how the framing on one of the standard generators of $\pi_1(\Sigma_g)$ behaves in reference to a standard framing. The 1-skeleton is chosen in such a way that the loops coincide with the standard set of generators of $\pi_1(\Sigma_g)$, and therefore such that the coefficients $a_i,b_i\in\Zb_2$ describe the homotopy class of framing of each loop.

    We can unlink the $A$-coloured ribbon graph from the surgery link by using the relation
    \[
        c_{A,\omega}\circ c_{\omega,A} = \id_{\omega_0}\otimes\id_A \oplus \id_{\omega_1}\otimes N_A\,.
    \]
    The resulting closed loops in the $A$-ribbon graph will only be non-zero for the Kirby colours $\omega_{a_i}$ and $\omega_{b_i}$ as determined by the spin structure, see the right hand side of Figure~\ref{fig:state_space_projector}.

    To now find the dimension of the state space we project to the tensor unit by linking the special ribbon graph of $\Sigma_g\times I$ with $\omega=\omega_0+\omega_1$ and to $f$ by linking with $\omega_0-\omega_1$. Taking the trace will now give the categorical dimensions. In the latter case we also have to account for the dimension of $\hat f$ when taking the trace which yields an additional factor of $-1$. When we are $\sVect$-valued this contributes an odd super-vector space due to $\hat f = f\boxtimes K^-$. Since we are interested in the number of basis elements (rather than the categorical dimension) we will want to adjust $\dim_-$ by another factor of $-1$ in the case of $\dim f =1$. The total sign we need to multiply the evaluation of Figure~\ref{fig:s1_times_surface} by can then be expressed succinctly as $\epsilon^{\frac{1-\dim f}{2}}$.

    From this point the calculation can essentially already be found in~\cite[Thm.\,3.3]{Bla03} for $\dim_+$. Namely, the ribbon graph computing the trace is given in Figure~\ref{fig:s1_times_surface}. Applying the identity in Figure~\ref{fig:state_space_calc} for all $g$ parts of the diagram. In each step, the signs $\mu,\nu$ for the graded Kirby colours are $\nu=a_i$ and $\mu=b_i$, so that the overall sign is
    \[
        (-1)^{\sum_{i=1}^g a_ib_i} = (-1)^{\mathrm{Arf}(\sigma)}\,.
    \]
    \phantom{.}
\end{proof}

\begin{remark}
\begin{enumerate}[leftmargin=*]
    \item Any fixed point $\lambda \in Irr(\Cc)$ of $f$ has to be in the odd component: $\lambda \in \Cc_1$. This follows from applying the twist to $\lambda \otimes f \cong \lambda$ together with $\theta_{f \otimes \lambda} = c_{\lambda,f} \circ c_{f,\lambda} \circ (\theta_f \otimes \theta_\lambda)$. We can use this to write the state space dimensions as
    \[
        \dim_\varepsilon(\Zspin(\Sigma_g,\sigma)) = \epsilon^{\frac{1-\dim f}{2}}(D_0 + \epsilon D_1)\ ,
    \]
    where
    \begin{align*}
        D_0 &= \mathcal{D}^{2g-2} 4^{-g}  \sum_{\lambda\in Irr(\Cc_0)} \dim(\lambda)^{2-2g} \ ,\\
        D_1 &= \mathcal{D}^{2g-2} \sum_{\lambda\in Irr(\Cc_1)} \dim(\lambda)^{2-2g}
            \begin{cases}
                \frac{1}{4^g}, & \lambda\not\simeq\lambda\otimes f,\\
                (-1)^{\mathrm{Arf}(\sigma)}\frac{1}{2^g}, & \lambda\simeq\lambda\otimes f.
            \end{cases}
    \end{align*}
    In particular, the total dimension
    \[
        \dim_+(\Zspin(\Sigma_g,\sigma)) + \dim_-(\Zspin(\Sigma_g,\sigma)) = \begin{cases} 2D_0, & \dim f = 1,\\ 2D_1, &\dim f = -1,\end{cases}
    \]
    does not depend on the spin structure $\sigma$, since for $\dim f=-1$ there can be no fixed points under tensoring with $f$.

    \item When $f$ has no fixed points, the dimensions $\dim_+ \Zspin(\Sigma,\sigma)$ and $\dim_- \Zspin(\Sigma,\sigma)$ are independent of the spin structure $\sigma$. This is always true when we have $\dim f = -1$. Having a fixed point under $f$ shifts a contribution from $\dim_+$ to $\dim_-$, that is to an odd vector space, for surfaces with Arf-invariant 1. Whether the $\dim_-$ must vanish in the absence of $f$ fixed points is unclear to us and is equivalent to the question of whether spin modular categories without $f$ fixed points necessarily have a grading with three or more components (in particular such that $f$ is not in the trivial component) posed in~\cite[Question 2.19]{BGHNPRW17}.

\end{enumerate}
\end{remark}

\begin{example}\label{ex:pointed_spin_state_space}
\begin{enumerate}[leftmargin=*]
    \item
If the spin modular category $\Cc$ is pointed with simple objects indexed by some finite abelian group $G$, the formula in Proposition~\ref{prop:spin-RT-state-spaces} simplifies to
    \[
        \dim_+(\Zspin(\Sigma_g,\sigma)) = \big( |G|/4 \big)^{g}~,\quad \dim_-(\Zspin(\Sigma_g,\sigma)) = 0\,.
    \]
In particular, the dimensions $\dim_\pm$ of the state space do not depend on the spin structure $\sigma$ on $\Sigma_g$.

\item
The other extreme is the Ising category. Here $\Cc_0 = \{ 1,\epsilon\}$ and $\Cc_1 = \{ \sigma \}$ with $f = \epsilon$ and $\sigma$ a fixed point. Thus $\Cc_1$ consists entirely of fixed points for $f$. The formula in Proposition~\ref{prop:spin-RT-state-spaces} gives, for $\nu = \pm 1$,
    \begin{align*}
        \dim_{\nu}(\Zspin(\Sigma_g,\sigma))
        = 2^{2g-2} \Big( 2 \, \frac{1}{4^g} + \nu (\sqrt{2})^{2-2g} (-1)^{\mathrm{Arf}(\sigma)}\frac{1}{2^g}\Big)
        = \begin{cases}
            1\,, & (-1)^{\mathrm{Arf}(\sigma)} = \nu \ ,\\
            0\,, & (-1)^{\mathrm{Arf}(\sigma)} \neq \nu \ .
        \end{cases}
    \end{align*}
We see that all state spaces $\Zspin(\Sigma_g,\sigma)$ are one-dimensional, and that their parity is given by $\mathrm{Arf}(\sigma)$. This agrees with the expected results from~\cite[Sec.\,7.2]{GK16}.
\end{enumerate}
\end{example}

We will next compare the gauging to the three-manifold invariants from~\cite{Bla03,BBC17}.

Let $M_L$ be a manifold obtained by surgery on the link $L$. Spin structures on $M_L$ can be expressed in terms of a characteristic sublink, membership in which determines whether a frame running in a loop around the link component will induce a deck transformation in the spin cover. For details see~\cite{Bla92,BM96}. Let then $s=\{s_1,\dots,s_{|L|}\}$ be a spin structure on $M_L$, where the $s_i$ are 1 if the corresponding component of the link is part of the characteristic sublink of $L$ and 0 else.
\begin{theorem}[\cite{Bla03}, Thm.\,2.2]
    The number
    \[
        \tau_\Cc(M_L,s) = \frac{\Fc_\Cc(L(\omega_{s_1},\dots,\omega_{s_{|L|}}))}{\Fc_\Cc(U_+(\omega))^{b_+}\Fc_\Cc(U_-(\omega))^{b_-}}
    \]
    is an invariant of $M_L$ and $s$. Here $\Fc_\Cc$ is the Reshetikhin-Turaev invariant associated to $\Cc$-coloured links in $S^3$, $b_\pm$ is the number of positive (negative) eigenvalues of the linking matrix $L_{ij}$ and $U_\pm$ are unknots with framing $\pm 1$.
\end{theorem}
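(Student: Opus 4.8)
This is a classical result of~\cite{Bla03}, proved there by a spin Kirby calculus argument; within the framework built above I would instead deduce it from the gauging construction. Specialise the input to $A=1\oplus\hat f\in\widehat\Cc$ with $\Zor$ the Reshetikhin--Turaev TFT attached to $\Cc$ (tensored with the trivial $\sVect$-theory when $\dim f=1$, so that lines may be coloured by $\widehat\Cc$). By Proposition~\ref{prop:line_defects_for_1f} one has $\ALineCat\simeq\widehat\Cc$, and Theorem~\ref{thm:zspin_is_a_TFT} produces a symmetric monoidal functor $\Zspin:\hBordsp(\ALineCat)\to\TargetCat$. Its value on a closed spin $3$-manifold $(M,s)$ --- empty boundary, no line defects --- lies in $\mathrm{End}_{\TargetCat}(\Zor(\emptyset))\cong k$ and, being a functor value, depends only on the diffeomorphism class of $(M,s)$. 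So $M\mapsto\Zspin(M,s)$ is automatically a well-defined spin invariant, and the remaining task is to identify it with $\tau_\Cc$.

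For that identification I would fix a surgery presentation $M=M_L$ and run the computation already used in the proof of Proposition~\ref{prop:spin-RT-state-spaces}: take $L$ with the standard surgery ribbon graph, pick a $1$-skeleton of the complement of a tubular neighbourhood of $L$ whose framing realises $s$, convert it to an $A$-coloured ribbon graph $G_s$, contract $G_s$ as far as possible using $\Delta$-separability and commutativity of $A$ (Lemma~\ref{lem:loop_removal}, Proposition~\ref{prop:A_graph_equiv_class}), and unlink the few remaining $A$-loops from each surgery component via $c_{A,\omega}\circ c_{\omega,A}=\id_{\omega_0}\otimes\id_A\oplus\id_{\omega_1}\otimes N_A$. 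The net effect is that the $i$-th surgery component becomes coloured by the graded Kirby colour $\omega_{s_i}$, with $s_i\in\Zb_2$ recording membership of the $i$-th component in the characteristic sublink determined by $s$, while the leftover scalar reproduces the normalisation $\Fc_\Cc(U_+(\omega))^{b_+}\Fc_\Cc(U_-(\omega))^{b_-}$ of $\tau_\Cc$ (up to the global $\Dc$-powers built into the chosen RT normalisation of $\Zor$); here one uses that in a spin modular category $\Fc_\Cc(U_\pm(\omega))=\Fc_\Cc(U_\pm(\omega_1))$, because $f$ is transparent to itself, so $f\otimes(-)$ is a fixed-point-free involution on $\mathrm{Irr}(\Cc_0)$ with $\theta_{f\otimes\lambda}=-\theta_\lambda$, forcing $\sum_{\lambda\in\mathrm{Irr}(\Cc_0)}(\dim\lambda)^2\theta_\lambda^{\pm1}=0$. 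Running this for every surgery presentation yields $\Zspin(M_L,s)=\tau_\Cc(M_L,s)$ in all cases, so $\tau_\Cc(M_L,s)$ inherits well-definedness from $\Zspin$; this is Proposition~\ref{prop:spin-TFT-produces-MF-invariants}, and there is no circularity, since each presentation is treated on its own.

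A self-contained alternative, essentially the argument of~\cite{Bla03}, is to verify invariance directly under spin Kirby moves: a spin $3$-manifold is presented by a framed link with a characteristic sublink, and two presentations differ by isotopy, handle slides (with the induced rule for how the characteristic sublink transforms), and blow-ups and blow-downs of split $\pm1$-framed unknots (which are automatically in the characteristic sublink). Isotopy invariance is immediate; under a handle slide the linking matrix changes by a unimodular congruence, so $b_\pm$ is unchanged and one only needs a graded version of the Kirby-colour sliding lemma, which follows from the $\Zb_2$-grading by $f$-braiding together with Lemma~\ref{lem:graded_projectors}; under a $\pm1$ blow-up $b_\pm$ increases by one while the numerator picks up exactly the matching factor $\Fc_\Cc(U_\pm(\omega_1))=\Fc_\Cc(U_\pm(\omega))$.

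The main obstacle in either route is bookkeeping rather than a new idea: keeping the characteristic-sublink data, the powers of $\Dc$, and the $\pm1$-unknot normalisation factors consistent throughout, and --- in the TFT route --- handling cleanly the passage to $\widehat\Cc=\Cc\boxtimes\sVect$ when $\dim f=1$, where the parity shift in $\hat f=f\boxtimes K^-$ feeds into the trace normalisations (the source of the $\epsilon^{(1-\dim f)/2}$-type signs already seen in Proposition~\ref{prop:spin-RT-state-spaces}).
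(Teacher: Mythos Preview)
The paper does not give its own proof of this theorem; it is quoted from \cite{Bla03} as background, and the paper's contribution is the converse direction, Proposition~\ref{prop:spin-TFT-produces-MF-invariants}, which shows that the gauged TFT $\Zspin$ reproduces $\tau_\Cc$ up to an explicit factor.

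That said, your first route is correct and is precisely the way the paper's results yield an independent proof of the cited statement: Theorem~\ref{thm:zspin_is_a_TFT} makes $\Zspin(M,s)$ a spin diffeomorphism invariant, and the computation in the proof of Proposition~\ref{prop:spin-TFT-produces-MF-invariants} (which nowhere uses that $\tau_\Cc$ is already known to be well-defined) gives, for every surgery presentation $L$, the relation $\Zspin(M_L,s)=2\,\mathcal{D}^{-1-b_1(M_L)}\,\tau_\Cc(M_L,s)$. Since $b_1$ depends only on the diffeomorphism type of $M_L$, invariance of $\tau_\Cc$ follows. Your assertion of literal equality $\Zspin(M_L,s)=\tau_\Cc(M_L,s)$ is off by exactly this factor: the $2$ is the value $\epsilon\circ\eta$ left over after contracting the $A$-skeleton, and the $\mathcal{D}^{-1-b_1}$ arises from matching the RT normalisation $\mathcal{D}^{-1-|L|}\delta^{-\sigma(L)}$ against $\mathcal{D}_+^{-b_+}\mathcal{D}_-^{-b_-}$ via $b_1+b_++b_-=|L|$. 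This does not affect the argument, but you should state the relation accurately. Your computation that $\Fc_\Cc(U_\pm(\omega_0))=0$, and hence $\Fc_\Cc(U_\pm(\omega))=\Fc_\Cc(U_\pm(\omega_1))$, is correct and is what makes the blow-up/blow-down step in your alternative Kirby-calculus route go through; it plays no role in the TFT route.
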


\begin{proposition}\label{prop:spin-TFT-produces-MF-invariants}
    Let $L$ be a surgery link and $M_L$ the closed $3$-manifold derived by surgery on the link $L$ in $S^3$. Let $s$ be a spin structure on $M$, which we may identify with the sequence $s_i$ determining the characteristic sublink of $L$.
    We have
    \[
        \Zspin(M_L,s) = 2 \mathcal{D}^{-1-b_1} \tau_\Cc(M_L,s)\,.
    \]
\end{proposition}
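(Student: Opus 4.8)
The plan is to compute $\Zspin(M_L,s)$ directly from its definition as the evaluation of $\Zor$ on $M_L$ with an $A$-coloured skeleton $G_s$ encoding the spin structure $s$, and then to massage this $A$-coloured ribbon graph in $S^3$ (via surgery presentation) until it manifestly produces the graded Kirby colours $\omega_{s_i}$ appearing in $\tau_\Cc(M_L,s)$.

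First I would recall that since $M_L$ is closed (no boundary), $\Zspin(M_L,s)\in\mathrm{End}(\Zspin(\emptyset))=\mathrm{End}(\Zor(\emptyset))$ is a scalar, obtained as $\Zor(M_L,G_s)$ where $G_s\in\Rc\textsf{Spin}(s)$ is any $A$-coloured graph representing $s$. I would pick the representative adapted to the surgery description: present $M_L$ as $S^3$ with surgery on $L$, so that $\Zor(M_L,G_s)$ is computed (up to the standard anomaly normalisation by $\Fc_\Cc(U_\pm(\omega))^{b_\pm}$ and a power of $\Dc$) as $\Fc_\Cc$ of $L$ with each component coloured by the Kirby colour $\omega$, together with the extra $A$-coloured skeleton $G_s$ sitting in the complement of $L$. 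Concretely, I would choose the skeleton so that it is a single unknotted $A$-loop (coming from a 1-skeleton of $M_L$ with one $0$-cell) which links the $i$-th surgery component with linking number determined by $s_i$: a bounding framing around that component if $s_i=0$ and a non-bounding one if $s_i=1$, precisely as in the characteristic-sublink description of spin structures on $M_L$ (cf.\ \cite{Bla92,BM96}).

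Next I would unlink the $A$-coloured skeleton from the surgery link using the same computation as in the proof of Proposition~\ref{prop:spin-RT-state-spaces}: the double braiding of $A$ with the Kirby colour satisfies $c_{A,\omega}\circ c_{\omega,A}=\id_{\omega_0}\otimes\id_A\oplus\id_{\omega_1}\otimes N_A$, so passing the $A$-strand through the $i$-th component replaces $\omega$ by $\omega_0\otimes\id_A\oplus\omega_1\otimes N_A$. The insertion of $N_A^{s_i}$ (forced by the framing, i.e.\ by the spin structure) together with Corollary~\ref{cor:even_loop_vanishes} (which kills the loop with an odd insertion) then projects the $i$-th Kirby colour onto $\omega_{s_i}$ — only the $\omega_{s_i}$-summand survives the closed $A$-loop. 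After all components have been treated, the $A$-skeleton has been reduced to a closed $A$-graph in $S^3$ disjoint from $L$, which by Lemma~\ref{lem:loop_removal} (all remaining loops now have bounding framing) contracts to $\epsilon\circ\eta=2\dim B\cdot\id$; for $A=1\oplus\hat f$ this factor is simply $2$. What remains is exactly $\Fc_\Cc(L(\omega_{s_1},\dots,\omega_{s_{|L|}}))$ times the anomaly normalisation, i.e.\ $2\,\tau_\Cc(M_L,s)$ up to the overall power of $\Dc$ built into the Reshetikhin-Turaev normalisation of $\Zor$ on closed $3$-manifolds.

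Finally I would pin down the power of $\Dc$. The Reshetikhin-Turaev TFT assigns to a closed $3$-manifold $M$ a value $\Dc^{-1-b_1(M)}$ times the normalised surgery invariant (this is the standard $\Dc$-normalisation; $b_1=b_1(M_L)$ is the first Betti number), which accounts for the stated prefactor $\Dc^{-1-b_1}$. The extra factor $2$ is the $\epsilon\circ\eta$ contribution just discussed. Assembling: $\Zspin(M_L,s)=2\,\Dc^{-1-b_1}\tau_\Cc(M_L,s)$. The main obstacle I expect is bookkeeping the normalisations consistently — matching the $\Dc$-powers and anomaly factors $\Fc_\Cc(U_\pm(\omega))^{b_\pm}$ between the conventions implicit in $\Zor$ and those in the definition of $\tau_\Cc$ in \cite{Bla03}, and making sure the framing/$N_A$ correspondence really reproduces the characteristic-sublink combinatorics rather than its complement; the algebraic core (unlinking via the double braiding, then contracting by $\Delta$-separability) is a direct reuse of earlier lemmas.
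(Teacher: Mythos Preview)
Your approach is the paper's: unlink the $A$-skeleton from the surgery link via the double-braiding identity $c_{A,\omega}\circ c_{\omega,A}=\id_{\omega_0}\otimes\id_A\oplus\id_{\omega_1}\otimes N_A$, observe that only the Kirby grading matching the spin structure survives (Corollary~\ref{cor:even_loop_vanishes}), contract the remaining $A$-graph to $\epsilon\circ\eta=2$ via Lemma~\ref{lem:loop_removal}, and match normalisations using $b_1+b_++b_-=|L|$.

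One caution on your setup, which is muddled in a way that could cause trouble when you write it out. A $1$-skeleton of $M_L$ with one $0$-cell is a bouquet of loops, not a single loop, and the spin structure is encoded in the \emph{framing} of these loops relative to $S^3$, not in their linking numbers with the surgery components --- the latter are fixed by the topology of the chosen skeleton. The paper makes the translation explicit: $p(\ell_i)\equiv\sum_j\mathrm{Link}(\ell_i,L_j)\,s_j+1\pmod 2$. The selection mechanism is then that one first expands $\omega=\omega_0+\omega_1$ on each component; unlinking an $A$-loop through an $\omega_\nu$-strand shifts the $A$-framing by $\nu$, and only the summand for which the resulting framing is bounding survives the contraction. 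This forces the grading on $L_j$ to equal $s_j$; the $N_A^{s_i}$ you describe is the \emph{effect} of this, not something you insert by hand.
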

\begin{proof}
    We will first deal with the normalisation. The invariants $\tau_\Cc(M_L,s)$ normalise the evaluation of the link by $\mathcal{D}_+^{-b_+}\mathcal{D}_-^{-b_-}$ (where $\mathcal{D}_\pm$ is the evaluation of the unknot with framing $\pm1$). In the case of the Reshetikhin-Turaev invariants, however, the normalisation is $\mathcal{D}^{-1-|L|}\delta^{-\sigma(L)}$, where $\sigma(L)$ is the signature of the linking matrix of $L$ and $|L|$ is the number of components in the link, $\mathcal{D}$ is a square root of $\mathcal{D}_+\mathcal{D}_-$ and $\delta=\frac{\mathcal{D}_+}{\mathcal{D}}=\frac{\mathcal{D}}{\mathcal{D}_-}$.
    We calculate
    \begin{align*}
        \mathcal{D}_+^{-b_+}\mathcal{D}_-^{-b_-}
        &= \mathcal{D}^{-b_+-b_-} \delta^{-\sigma(L)}.
    \end{align*}
    It remains to relate $\mathcal{D}^{-1-|L|}$ and $\mathcal{D}^{-1-b_1-b_+-b_-}$. To do so we note, that the $1$-homology classes of $M_L$ correspond precisely to the $0$-eigenvalues of the linking matrix $L_{ij}$, and so $b_1+b_++b_-=|L|$. To see this we can use that
    \[
        H_1(M_L,\Zb) = \Zb^{|L|} \Big/ \sum_j L_{ij}\mu_j\,,
    \]
    where the $\mu_i$ are the generators of the $\Zb$ factors. See~\cite[Ch.\,5]{GS99}.

    Before we continue we should compare the two descriptions of spin structures -- namely via characteristic sublinks and via framed 1-skeleta -- and show how to translate one to the other. To do so we consider the surgery link presentation of $M_L$ in $S^3$. The translation of the spin structure into the 1-skeleton gives us a ribbon graph in $M_L$ which in turn gives us a ribbon graph $G_s$ in $S^3\setminus L$. Given a spin-structure characterised via a sublink we can deduce the (homotopy class of) framing on the 1-skeleton: For a loop $\ell_i$ in the 1-skeleton we have that the framing (as a ribbon graph in $S^3$) $p(\ell_i)$ of $\ell_i$ must satisfy:
    \[
        p(\ell_i) \equiv \sum_j \mathrm{Link}(\ell_i,L_j)s_j + 1 \mod 2\,.
    \]
    In particular this means that a loop not linking any parts of $L$ will have the contractible framing in $S^3$, too. See~\cite{Bel98} for details.

    For the inverse we take the above relation as a system of linear equations which we have to solve for $s_i$.

    With that we return to showing that $2\Fc_\Cc(L(\omega_{s_1},\dots,\omega_{s_{|L|}})) = \Fc_\Cc(L(\omega,\dots,\omega) \cup G_s)$. Initially all parts of $L$ are coloured by $\omega$. We split this into a sum over the colourings $\omega_0$ and $\omega_1$:
    \[
        \Fc_\Cc(L(\omega,\dots,\omega)\cup G_s) = \sum_{(s_1,\dots,s_{|L|})\in\Zb_2^{|L|}} \Fc_\Cc(L(\omega_{s_1},\dots,\omega_{s_{|L|}})\cup G_s)\,.
    \]
    We can now first unlink the 1-skeleton from $L$ using the braiding given above and then contract the entire skeleton to a single coupon using Lemma~\ref{lem:loop_removal}. The coupon will evaluate to 2 if all loops were contractible and to 0 else due to Corollary~\ref{cor:even_loop_vanishes}.

    When we braid the $A$-coloured strand through an $\omega_i$-coloured strand the framing of the $A$-coloured strand changes by $i$. Denote by $p(\ell_i)$ the framing of a loop $\ell_i$ in the 1-skeleton relative to $S^3$. Then the unlinked loop has framing
    \[
        p(\ell_i) + \sum_j \mathrm{Link}(\ell_i,L_j) s_{j} \mod 2\,.
    \]
    It needs to be odd for the loop to be contractible. That is precisely the condition for the $s_j$ to encode the same spin structure as the framing and we see that only this term in the sum can survive.
\end{proof}

\begin{example}
    As an example we will take the three-torus. A PLCW decomposition of the three-torus is easily derived from the picture of gluing up the opposing sides of a cube and a convenient surgery link is given by the Borromean link with $0$-framings. The resulting graph and its simplification are shown in Figure~\ref{fig:t3_rt_example}. The blue link will now yield the manifold invariant from~\cite{BBC17}, while the black part will give the twisted dimension of $A$, which is 2.

\begin{figure}
    \centering
    \includegraphics[width=0.8\linewidth]{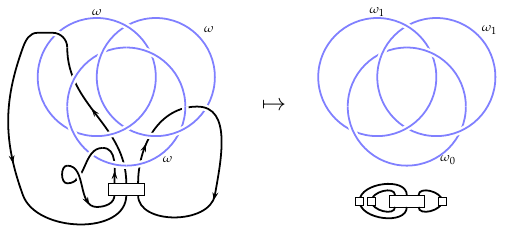}
    \caption{Simplification of the torus link with added spin defect network.}
    \label{fig:t3_rt_example}
\end{figure}

    We can calculate the result in general. By a calculation essentially the same as for the state spaces we find
    \[
        \Zspin(T^3,s) = 2\sum_{\lambda\in Irr(\Cc)}\begin{cases}
            \frac{1}{4}, & \lambda\not\simeq f\otimes\lambda,\\
            \frac12 (-1)^{s_1s_2s_3} , & \lambda\simeq f\otimes\lambda.
        \end{cases}
    \]
    In particular we find that all spin structures for which at least one $s_i$ is zero will always yield the same result. This is to be expected, as these are diffeomorphic. The one spin structure not diffeomorphic to the others will be distinguished if we have at least one fixed point under $f$.
\end{example}

\begin{example}\label{ex:val-in-vect}
    To show that the TFTs that take values in $\Vect$ can also produce true spin TFTs we consider the Borromean link where we change the framing of each component to two. There are -- again -- eight spin structures on the manifold produced by surgery on that link. Take $\Cc$ to be the braided fusion category with fusion rules given by $\Zb_2\times\Zb_2$ and braided structure induced by the quadratic form (see the next section for more on these)
    \begin{align*}
        q: \Zb_2\times\Zb_2 \to \Cb^\times,\quad
        (0,0) \mapsto 1,\quad
        (1,1) \mapsto 1,\quad
        (1,0) \mapsto i,\quad
        (0,1) \mapsto -i.
    \end{align*}
    We also assign dimension $1$ to $(0,0)$ and $(1,0)$ while $(0,1)$ and $(1,1)$ will have dimension $-1$. The corresponding $T$- and $S$-matrices are given by (in the basis $(0,0),(1,1),(1,0),(0,1)$)
    \[
        S = \begin{pmatrix} 1 & -1 & 1 & -1\\ -1 & 1 & 1 & -1\\ 1 & 1 & -1 & -1\\ -1 & -1 & -1 & -1\end{pmatrix}\ , \qquad T = \begin{pmatrix} 1 & & & \\ & -1 & & \\ & & i &\\ & & & i\end{pmatrix}\ ,
    \]
    telling us $\Cc$ is modular. Then $\Cc$ is spin modular with fermion $f=(1,1)$. To evaluate $\Zspin$ on $M$ note that $\theta^2_{\omega_0} = \id_{\omega_0}$ and $\theta^2_{\omega_1}=-\id_{\omega_1}$. We can thus remove the twists from the surgery graph at the cost of an overall constant factor of $\pm1$ and thereby reduce the calculation to the evaluation on the torus already given above. Thus
    \[
        \Zspin(M,s) = (-1)^{s_1+s_2+s_3}\,.
    \]
\end{example}

\subsection{Abelian Spin Chern-Simons Theories}\label{subsec:abelian_cs}

In~\cite{BM05} the classification of abelian spin Chern-Simons TFTs is given in terms of certain metric groups. These are finite abelian groups $G$ together with a \emph{quadratic form} $q:G\to\Qb/\Zb$, i.e. a map $q$ such that $b(x,y):=q(x+y)-q(x)-q(y)+q(0)$ is bilinear. A quadratic form $q$ is \emph{non-degenerate} if $b$ is non-degenerate.
    We call a quadratic form $q$ \emph{homogeneous} if $q(nx) = n^2 q(x)$ for $n \in \mathbb{Z}$.\footnote{A homogeneous quadratic form automatically satisfies $q(0)=0$, but the normalisation condition $q(0)=0$ does not imply that $q$ is homogeneous. Instead, $q(x) - q(-x)$ can be a non-trivial linear function, see e.g.\ \cite[Sec.\,5.3]{HS05}.}

To a quadratic form $q$ we can assign an equivalence class $[q]$ given by the relation that $q\sim q'$ when there exists an element $\Delta\in G$ such that $q(g) = q'(g-\Delta)$ for all $g\in G$.

Associated to a group $G$ with quadratic form $q$ we have the Gauß sum
\[
    \tau^\pm(G,q) := \frac{1}{\sqrt{|G|}} \sum_{g\in G} e^{\pm 2\pi i q(g)}\,,
\]
which is a root of unity. Note that the Gauß sum is equal for all quadratic forms in the same equivalence class.

It is shown in~\cite{BM05} that abelian spin Chern-Simons theories are classified by tuples $(\mathscr{D},[q],\sigma)$, where
\begin{itemize}
    \item $\mathscr{D}$ is a finite abelian group,
    \item $[q]$ is an equivalence class of non-degenerate quadratic forms $q:\mathscr{D}\to\Qb/\Zb$,
    \item $\sigma\in\Zb_{24}$.
\end{itemize}
This data is additionally subject to the Gauß-Milgram constraint, which stipulates that the Gauß sum be given by
\[
    \tau^+(\mathscr{D},q) = e^{2\pi i \sigma / 8}\,,
\]
which also fixes the normalisation $q(0)$. If there exists a representative of $[q]$ with $q(0)=0$ the data corresponds to an oriented theory. We will call the tuple spin if that is not the case.

\begin{definition}
    We define the groupoid of \emph{abelian spin Chern-Simons data} $\textsf{ASCS}$ as having
    \begin{itemize}
        \item objects given by spin tuples $(\mathscr{D},[q],\sigma \mod 8)$ with $\mathscr{D}$ as above, and
        \item morphisms $(\mathscr{D}_1,[q_1],\sigma_1)\to(\mathscr{D}_2,[q_2],\sigma_2)$ given by group isomorphism $\phi:\mathscr{D}_1\to\mathscr{D}_2$ such that $[q_2\circ\phi]=[q_1]$.
    \end{itemize}
\end{definition}
Note that we only take $\sigma\in\Zb_8$ within the groupoid\footnote{The value modulo 24 determines a deprojectification of the mapping class group action associated to the TFT, which is not accessible to our TFT-based comparison.} and that the existence of a morphism already implies $\sigma_1=\sigma_2$.

    Next we turn to pointed spin modular categories. A pointed modular category is described -- up to equivalence -- by an abelian group $G$ together with a non-degenerate homogeneous quadratic form $\hat q: G\to \Qb/\Zb$. See~\cite[Sec.\,8.4]{EGNO15} for this construction,
    but note that there ``quadratic form'' denotes what we call homogeneous quadratic form, and that these are written multiplicatively as $q:G\to\Cb^\times$.

    In this subsection we assume that the quantum dimensions of the simple objects are all $+1$, else one needs to include a group homomorphism $G \to \Zb_2$ as extra data. In particular, Example~\ref{ex:val-in-vect} is excluded in this setting, and, as we saw in Section~\ref{sec:spin-modular-cat}, the algebra to be gauged is necessarily $A = 1 \oplus \hat f \in \Cc\boxtimes\sVect$ with $\hat f = f\boxtimes K^{-}$.

\begin{definition}
    We define the groupoid of \emph{pointed spin-modular categories} $\textsf{PSM}$ as having
    \begin{itemize}
        \item objects given by tuples $(G,\hat q,f)$ with $G$ a finite abelian group, $\hat q$ a non-degenerate homogeneous quadratic form, and $f$ a fermion, i.e.\ $2f=0$ and $\hat q(f)=\frac{1}{2}$ (and so in particular $f \neq 0$),
        \item morphisms $(G_1,\hat q_1,f_1)\to (G_2,\hat q_2,f_2)$ given by group isomorphisms $\phi: G_1\to G_2$ such that $\hat q_2\circ\phi = \hat q_1$ and $\phi(f_1)=f_2$.
    \end{itemize}
\end{definition}

To declutter the notation we will commonly only refer to the groups when referencing objects from either groupoid, leaving the additionally data implicit.

As we will see, the groupoids $\textsf{ASCS}$ and $\textsf{PSM}$ are almost equivalent.

We define a functor $F:\textsf{PSM}\to\textsf{ASCS}$ as follows: Given $(G,\hat q, f)\in\textsf{PSM}$, let $\hat b$ be the bilinear form associated to $\hat q$. We set $G_0:=\{g\in G\, |\, \hat b(g,f)=0\}$ to be the subgroup transparent with respect to $f$ and define $\mathscr{D} := G_0/\langle f\rangle$. Choose a representative $a\in[1]\in G/G_0 \cong \Zb_2$ and set $[q]:=[q_a]$, where $q_a(x):= \hat q(x-a)$. Lastly we determine $\sigma$ via the Gauß sum by demanding $\tau^+(G,\hat q) = e^{2\pi i\sigma/8}$.

Applied to an isomorphism $\phi: G_1\to G_2$, the functor $F$ will simply give the induced isomorphism on the quotients.

\begin{proposition}\label{prop:spin-abelian-Chern-Simons}
$F$ defines a functor $\textsf{PSM}\to\textsf{ASCS}$ which is essentially surjective and 2:1 on morphisms. It thus induces a bijection $\pi_0(\textsf{PSM})\cong\pi_0(\textsf{ASCS})$.
\end{proposition}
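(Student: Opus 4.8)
The plan is to verify the three claims — functoriality, essential surjectivity, and that $F$ is $2:1$ on morphisms — in that order, and then deduce the bijection on $\pi_0$ formally. For functoriality, I first need to check $F$ is well-defined on objects. Given $(G,\hat q, f)\in\textsf{PSM}$, the subgroup $G_0 = \{g : \hat b(g,f)=0\}$ contains $f$ since $\hat b(f,f) = \hat q(2f) - 2\hat q(f) = 0 - 1 \equiv 0$ in $\Qb/\Zb$ (using $2f=0$ and $\hat q(f)=\tfrac12$); so $\mathscr{D}=G_0/\langle f\rangle$ makes sense. Because $\hat b$ is non-degenerate and $f\neq 0$, the index $[G:G_0]=2$, so the coset $a\in[1]\in G/G_0\cong\Zb_2$ exists, and $q_a(x)=\hat q(x-a)$ descends to $\mathscr{D}$: one checks $q_a(f) = \hat q(f-a)$ and $q_a(0)=\hat q(a)$ agree modulo the shift relation, and more carefully that $q_a$ is constant on cosets of $\langle f\rangle$ in $G_0$ using $\hat b(x,f)=0$ for $x\in G_0$. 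Non-degeneracy of the induced form on $\mathscr{D}$ follows from non-degeneracy of $\hat b$ restricted to $G_0$ modulo its radical $\langle f\rangle$. The class $[q_a]$ is independent of the choice of representative $a$ by definition of the equivalence relation on quadratic forms. The Gauß–Milgram constraint $\tau^+(\mathscr{D},q_a)=e^{2\pi i\sigma/8}$ needs to be matched with $\tau^+(G,\hat q)=e^{2\pi i\sigma/8}$; this is a Gauß-sum computation splitting $G$ over the cosets of $G_0$ and using that the $f$-twisted sum over $\langle f\rangle$ contributes the factor relating $\tau^+(G,\hat q)$ and $\tau^+(\mathscr{D},q_a)$ — this is the one genuinely computational point and is standard (it is the fermionic Gauß sum identity). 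Functoriality on morphisms is then immediate: an isomorphism $\phi$ with $\hat q_2\circ\phi=\hat q_1$, $\phi(f_1)=f_2$ sends $G_0^{(1)}$ to $G_0^{(2)}$ and $\langle f_1\rangle$ to $\langle f_2\rangle$, hence descends to an isomorphism $\mathscr{D}_1\to\mathscr{D}_2$ respecting the quadratic forms, and preserves $\sigma$ since $\sigma$ is determined by the Gauß sum which is invariant under $\phi$.

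For essential surjectivity, given $(\mathscr{D},[q],\sigma)\in\textsf{ASCS}$ with a chosen representative $q$ of $[q]$, I construct $(G,\hat q,f)$ as follows. The standard device (as in the classification of metric groups, cf.\ the discussion around \cite[Sec.\,8.4]{EGNO15}) is a \emph{Wall-type extension}: build $G$ as an extension $0\to\langle f\rangle\to G_0\to\mathscr{D}\to 0$ carrying a form restricting to $q$ on $\mathscr{D}$ with $\hat q(f)=\tfrac12$, then extend further by a $\Zb_2$ so that the full form is non-degenerate and homogeneous. Concretely one can often take $G = \mathscr{D}\times\langle f\rangle\times\langle a\rangle$ with $2a=0$ (or $4a=0$ when an order-four correction is forced) and define $\hat q$ on generators so that it restricts appropriately; the Gauß–Milgram value of the result will then automatically match $\sigma$ because $[q]$ already satisfied it. One must verify homogeneity $\hat q(ng)=n^2\hat q(g)$ — this is where the normalisation $q(0)$ forced by Gauß–Milgram, together with the shift used to pass from a (possibly non-homogeneous) $q$ to the homogeneous $\hat q$, is exactly consumed. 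That $F$ sends this $(G,\hat q,f)$ back to $(\mathscr{D},[q],\sigma)$ up to isomorphism is then a direct check against the definition of $F$.

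For the $2:1$ statement on morphisms, fix objects $(G_i,\hat q_i,f_i)$ and an $\textsf{ASCS}$-morphism $\psi:\mathscr{D}_1\to\mathscr{D}_2$. I must count the $\phi:G_1\to G_2$ with $\hat q_2\phi=\hat q_1$, $\phi(f_1)=f_2$, and $F(\phi)=\psi$. Any such $\phi$ restricts to an isometry $G_0^{(1)}\to G_0^{(2)}$ fixing $f$ and inducing $\psi$ on the quotient; the isometries of $G_0$ inducing a fixed map on $G_0/\langle f\rangle$ and fixing $f$ form a torsor over $\mathrm{Hom}(\mathscr{D}_1,\langle f\rangle)\cong\mathrm{Hom}(\mathscr{D}_1,\Zb_2)$ intersected with the isometry condition — but the isometry condition (preserving $\hat q_1$) combined with $\hat b_1(\cdot,f_1)=0$ on $G_0^{(1)}$ cuts this down, and one finds there is essentially a unique extension over $G_0$. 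The remaining freedom is in extending across the coset $G_1\setminus G_0^{(1)}$, i.e.\ the choice of image of the representative $a_1$: the isometry and $\phi(f_1)=f_2$ constraints pin $\phi(a_1)$ down to an element of the form $a_2 + (\text{element of }G_0^{(2)})$ solving a fixed affine-quadratic condition, whose solution set has size $2$ — the two solutions differing by $f_2$ (equivalently, by whether one composes with the shift automorphism $g\mapsto g+f_2$ on the $a$-coset). Hence exactly two lifts. The hard part is precisely this morphism count: one must be careful that the quadratic (not merely bilinear) constraint really has two solutions and not one or four, which comes down to the identity $\hat q_2(x+f_2)=\hat q_2(x)+\hat b_2(x,f_2)+\tfrac12$ and the fact that on the non-trivial coset $\hat b_2(\cdot,f_2)$ is the non-zero homomorphism.

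Finally, essential surjectivity gives surjectivity of $\pi_0(F):\pi_0(\textsf{PSM})\to\pi_0(\textsf{ASCS})$, and the $2:1$-on-morphisms property gives injectivity: if $F(G_1)\cong F(G_2)$ in $\textsf{ASCS}$ then pulling the witnessing isomorphism back along the (surjective-on-hom) map $F$ produces a morphism $G_1\to G_2$ in $\textsf{PSM}$, so $G_1\cong G_2$. Hence $\pi_0(F)$ is a bijection, as claimed.
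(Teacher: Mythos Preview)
Your well-definedness argument is fine and tracks the paper's. The substantive gaps are in the other two claims.

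\medskip

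\textbf{Essential surjectivity.} Your construction is too vague to be a proof. ``One can often take $G=\mathscr{D}\times\langle f\rangle\times\langle a\rangle$'' with an ad hoc order-four correction does not establish that for \emph{every} spin tuple $(\mathscr{D},[q],\sigma)$ such a $(G,\hat q,f)$ exists with $\hat q$ non-degenerate and homogeneous. The paper takes a completely different route: it invokes the result from \cite{BM05} that any $(\mathscr{D},[q],\sigma)$ is realised as the discriminant of an integral lattice $\Lambda$ with characteristic class $W_2$, then takes the even sublattice $\Lambda_0=\ker(\lambda\mapsto(W_2,\lambda)\bmod 2)$ and sets $G=\Lambda_0^*/\Lambda_0$ with $\hat q([x])=\tfrac12(x,x)$ and $f$ the non-trivial class in $\Lambda/\Lambda_0$. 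Non-degeneracy and homogeneity are then automatic from the lattice pairing, and the verification that $F(G)\cong\mathscr{D}$ is a short direct computation.

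\medskip

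\textbf{The $2{:}1$ claim on morphisms.} Here there is a genuine gap. Your counting argument analyses the set of lifts $\phi$ of a given $\psi:\mathscr{D}_1\to\mathscr{D}_2$ and concludes it has size two \emph{assuming it is non-empty}. But non-emptiness is exactly the statement that $F$ is full, and you never prove it: the sentences ``one finds there is essentially a unique extension over $G_0$'' and ``whose solution set has size $2$'' both presuppose existence of a lift without argument. This is not a formality --- a priori there could be non-isomorphic $(G_1,\hat q_1,f_1)$ and $(G_2,\hat q_2,f_2)$ with isomorphic images under $F$. The paper separates the two issues: it first proves fullness by again passing through lattices, using Nikulin's results on stable equivalence (\cite[Thm.\,1.3.1 and 1.16.10]{Nik79}) to lift $\psi$ to a lattice isomorphism after stabilising by unimodular summands, and then checking that adding a unimodular lattice of signature $0\pmod 8$ does not change the $\textsf{PSM}$ triple. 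Only after fullness is in hand does the paper compute $\ker(F:\mathrm{Aut}(G)\to\mathrm{Aut}(\mathscr{D}))\cong\Zb_2$; that kernel computation is close to what you wrote, and identifies the non-trivial element explicitly as $\varphi(x)=x+2\hat b(x,f)\,f$. Your final deduction of the bijection on $\pi_0$ is correct but relies precisely on the fullness you did not establish.
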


We will prove this proposition by series of lemmas.

\begin{lemma}
    The assignments above make $F$ a well-defined functor.
\end{lemma}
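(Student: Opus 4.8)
The plan is to check, in order: (i) that the object assignment $(G,\hat q,f)\mapsto(\mathscr D,[q],\sigma)$ lands in $\textsf{ASCS}$, i.e.\ that $\mathscr D$ is a finite abelian group, that $[q]$ is a well-defined equivalence class of non-degenerate quadratic forms on $\mathscr D$, that it is a genuinely spin tuple (no representative with $q(0)=0$), and that the Gauß--Milgram constraint holds with the chosen $\sigma\in\Zb_8$; (ii) that the morphism assignment $\phi\mapsto\bar\phi$ is well-defined, i.e.\ $\phi$ restricts to an isomorphism $G_{0,1}\to G_{0,2}$ sending $f_1\mapsto f_2$, hence descends to $\bar\phi:\mathscr D_1\to\mathscr D_2$, and that $[q_2\circ\bar\phi]=[q_1]$ and $\sigma_1=\sigma_2$; (iii) functoriality: $F(\mathrm{id})=\mathrm{id}$ and $F(\psi\circ\phi)=F(\psi)\circ F(\phi)$, which is immediate once the quotient construction is set up.

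For (i): since $\hat b$ is a non-degenerate symmetric bilinear form and $f$ is a nonzero order-two element with $\hat q(f)=\tfrac12$, one has $\hat b(f,f)=\hat q(2f)-2\hat q(f)+\hat q(0)=-2\hat q(f)= 0 \in \Qb/\Zb$ (using homogeneity, $\hat q(0)=0$), so $f\in G_0$ and $\mathscr D=G_0/\langle f\rangle$ is a well-defined finite abelian group. One checks $q_a(x)=\hat q(x-a)$ is constant on cosets of $\langle f\rangle$ inside $G_0$: the difference $\hat q(x-a+f)-\hat q(x-a) = \hat b(x-a,f)+\hat q(f)$, and $\hat b(x,f)=0$ since $x\in G_0$, while $\hat b(a,f)$ is the nonzero element of $\tfrac12\Zb/\Zb$ because $a\notin G_0$, and $\hat q(f)=\tfrac12$; so the two terms cancel and $q_a$ descends to $\mathscr D$. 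That the associated bilinear form on $\mathscr D$ is $\hat b|_{G_0}$ passed to the quotient, and that this is non-degenerate, follows from the standard fact that for a non-degenerate form the radical of the restriction to $G_0 = f^{\perp}$ is exactly $\langle f\rangle$ (here using that $f$ is isotropic of order $2$). Independence of $[q]$ from the choice of $a$: two choices $a,a'$ differ by an element of $G_0$, and translating the argument by $\Delta=$ (class of $a-a'$) in $\mathscr D$ shows $q_{a}\sim q_{a'}$. That the tuple is spin, i.e.\ no representative has $q(0)=0$: $q_a(0)=\hat q(-a)=\hat q(a)$ (homogeneity), and if some translate had vanishing value at $0$ then $[q_a]$ would contain a homogeneous representative, forcing $\hat q$ restricted to a coset to be linear — one rules this out using $\hat b(a,f)\neq 0$, i.e.\ $a$ is genuinely not in $G_0$. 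Finally Gauß--Milgram: $\tau^+(G,\hat q)=e^{2\pi i\sigma/8}$ is the \emph{definition} of $\sigma\bmod 8$, so the constraint holds by fiat; one only needs that $\tau^+(G,\hat q)$ is an $8$th root of unity, which is the classical Gauß--Milgram theorem for non-degenerate quadratic forms on finite abelian groups.

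For (ii): $\phi$ preserves $\hat q$ hence $\hat b$, so $\phi(G_{0,1})=\{\phi(g):\hat b_1(g,f_1)=0\}=\{h:\hat b_2(h,\phi(f_1))=0\}=\{h:\hat b_2(h,f_2)=0\}=G_{0,2}$, using $\phi(f_1)=f_2$; and $\phi(\langle f_1\rangle)=\langle f_2\rangle$, so $\bar\phi$ is well-defined. Choosing $a_1$ and setting $a_2:=\phi(a_1)$ (which lies in the nontrivial coset of $G_{0,2}$), we get $q_{2,a_2}\circ\bar\phi = q_{1,a_1}$ on the nose, and since $[q_2]$ is independent of the choice of representative $a_2$, $[q_2\circ\bar\phi]=[q_1]$. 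Preservation of $\hat q$ also gives $\tau^+(G_1,\hat q_1)=\tau^+(G_2,\hat q_2)$, hence $\sigma_1=\sigma_2$, so $\bar\phi$ is a legitimate $\textsf{ASCS}$-morphism. Step (iii) is then formal. The only mildly delicate point — and the one I would write out most carefully — is the non-degeneracy of the induced form on $\mathscr D$, i.e.\ the identification of the radical of $\hat b|_{G_0}$ with $\langle f\rangle$; everything else is routine bookkeeping with cosets and the homogeneity identity $\hat q(nx)=n^2\hat q(x)$.
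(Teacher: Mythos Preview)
Your proposal covers more ground than the paper's own proof: the paper checks only that $q_a$ descends to a well-defined function on $\mathscr{D}$, that the class $[q_a]$ is independent of the choice of $a$, and that the Gauß--Milgram constraint holds; you additionally sketch non-degeneracy of the induced bilinear form, the spin condition, and the behaviour on morphisms. Your arguments for the descent of $q_a$ (via $\hat q(x-a+f)-\hat q(x-a)=\hat b(x-a,f)+\hat q(f)=\tfrac12+\tfrac12=0$ for $x\in G_0$) and for the morphism assignment are correct and parallel to what the paper does.

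There is, however, a genuine gap in your treatment of the Gauß--Milgram constraint. You write that $\tau^+(G,\hat q) = e^{2\pi i\sigma/8}$ is the \emph{definition} of $\sigma$, so the constraint ``holds by fiat''. But the constraint in the definition of $\textsf{ASCS}$ is $\tau^+(\mathscr{D}, q) = e^{2\pi i \sigma/8}$: it concerns the Gauß sum on the \emph{quotient} $\mathscr{D}$ with the shifted form $q_a$, not the Gauß sum on $G$ with $\hat q$. What is missing is the identity $\tau^+(G,\hat q) = \tau^+(\mathscr{D}, q_a)$, and this is precisely the main computation in the paper's proof. One splits the sum over $G$ into the pieces over $G_0$ and $G_1$; the $G_0$-piece vanishes because pairs $g,\, g+f$ cancel (using $\hat q(g+f)=\hat q(g)+\tfrac12$ for $g\in G_0$); the $G_1$-piece is rewritten as a sum over $G_0$ via the shift by $a$ and then collapses two-to-one onto $\mathscr{D}$ (using $\hat q(g+f)=\hat q(g)$ for $g\in G_1$), giving exactly $\tau^+(\mathscr{D},q_a)$. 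Without this step your $\sigma$ is not shown to satisfy the constraint required of an $\textsf{ASCS}$ object.

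A secondary point: your argument for the spin condition (``forcing $\hat q$ restricted to a coset to be linear --- one rules this out using $\hat b(a,f)\neq 0$'') is too compressed to stand as written; note that $q_a(0)=0$ only says some element of the odd coset $G_1$ is $\hat q$-isotropic, and it is not clear how your linearity claim follows from that. If you want to include this check, it needs a precise argument.
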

\begin{proof}
    We first check that $q_a$ descends to a well-defined quadratic form on $\mathscr{D}$, and that different choices of $a$ lead to equivalent quadratic forms on $\mathscr{D}$. Calculate, for $x \in G_0$,
    \begin{align*}
        \hat q(x+f-a) &= \hat b(x+f,-a) - \hat q(x+f) - \hat q(-a)\\
            &= \hat b(x,-a) + \underbrace{\hat b(f,-a)}_{=\frac{1}{2}} - \underbrace{\hat b(x,f)}_{=0} - \hat q(x) - \underbrace{\hat q(f)}_{=\frac{1}{2}} - \hat q(-a)\\
            &= \hat b(x,-a) - \hat q(x) - \hat q(-a)\\
            &= \hat q(x-a).
    \end{align*}
    This shows that $q_a$ is well-defined on $\mathscr{D}$ and it further shows that any shift of representative $a$ by an element in $G_0$ corresponds to a shift of $q_a$ by an element in $\mathscr{D}$ which will therefore be in the same class.

    Next we check that $q_a$ satisfies the Gauß-Milgram constraint with respect to the $\sigma$ we determined above. That the Gauß sum must be an eighth root of unity is shown by arguments in~\cite[Sec.\,6]{BM05}: The group $G$ can be realised as the discriminant group of some (even) lattice such that $\hat q$ is its discriminant form. Then $\sigma$ is the signature of the lattice modulo 8. It remains to check that the Gauß sums on $\mathscr{D}$ and $G$ agree.

    Choose a representative $a$ of the generator of $G/G_0$ as above:
    \begin{align*}
        \frac{1}{\sqrt{|G|}}\sum_{g\in G} e^{2\pi i \hat q(g)} &= \frac{1}{\sqrt{|G|}}\Big(\underbrace{\sum_{g\in G_0} e^{2\pi i \hat q(g)}}_{=0} + \sum_{g\in G\setminus G_0} e^{2\pi i \hat q(g)}\Big)\\
            &=\frac{1}{\sqrt{|G|}}\sum_{g\in G_0} e^{2\pi i \hat q(g-a)}
            = \frac{2}{\sqrt{|G|}}\sum_{[g]\in\mathscr{D}} e^{2\pi i \hat q(g-a)}\\
            &= \frac{1}{\sqrt{|\mathscr{D}|}}\sum_{x\in\mathscr{D}} e^{2\pi i q_a(x)}.
    \end{align*}
    Here the first sum on the right hand side vanishes due to $\hat q(g) = \hat q(f+g)+\frac{1}{2}$ for $g\in G_0$.
\end{proof}

\begin{lemma}\label{lem:PSM-ASCS-ess-surj}
    The functor $F:\textsf{PSM}\to\textsf{ASCS}$ is essentially surjective.
\end{lemma}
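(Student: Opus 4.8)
The plan is to realise every object $(\mathscr{D},[q],\sigma)$ of $\textsf{ASCS}$, up to isomorphism, as $F$ applied to a pointed spin modular category of the special shape $\mathscr{D}\times H$, where $H$ is a rank $\le 2$ ``fermion carrier'' whose role is to absorb the signature and the normalisation anomaly while leaving $\mathscr{D}$ untouched.

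First I would peel off the bilinear part of the data. Write $b$ for the non-degenerate symmetric bilinear form underlying $[q]$ and choose a \emph{homogeneous} quadratic refinement $\hat Q\colon\mathscr{D}\to\Qb/\Zb$ of $b$; one exists, e.g.\ as the discriminant quadratic form of an even lattice realising $b$, and by Milgram's formula its Gauß sum is $\tau^{+}(\mathscr{D},\hat Q)=e^{2\pi i\sigma_0/8}$ for some $\sigma_0\in\Zb$. Since $b$ is non-degenerate, a direct computation (the same one that makes $F$ well defined) shows that any quadratic form refining $b$ equals $\hat Q$ precomposed with a translation, up to an additive constant; hence $[q]=[\hat Q+c]$ for a single constant $c\in\Qb/\Zb$. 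Using $\tau^{+}(\mathscr{D},\hat Q+c)=e^{2\pi i c}\,\tau^{+}(\mathscr{D},\hat Q)$, the Gauß-Milgram constraint on $(\mathscr{D},[q],\sigma)$ forces $c\equiv(\sigma-\sigma_0)/8\bmod 1$, so in particular $8c\in\Zb$; moreover $c\ne 0$, since otherwise $[q]=[\hat Q]$ would have a normalised representative and the tuple would be oriented rather than spin.

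Next I would write down, for every residue $c\in\{\tfrac18,\tfrac28,\dots,\tfrac78\}$, a small pointed spin modular category $(H,\hat q_H,f_H)$ with $F(H,\hat q_H,f_H)\cong(\{0\},[c],8c)$: take $H=\Zb_4$ with $\hat q_H(n)=c\,n^2$ and $f_H=2$ when $8c$ is odd, and $H=\Zb_2\times\Zb_2$ with $\hat q_H(1,0)=\hat q_H(0,1)=c$, $\hat q_H(1,1)=\tfrac12$, $f_H=(1,1)$ when $8c$ is even. In each case one checks directly that $\hat q_H$ is homogeneous and non-degenerate, that $f_H$ is a fermion, that $f_H^{\perp}=\langle f_H\rangle$, and that $\tau^{+}(H,\hat q_H)=e^{2\pi i c}$. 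Then I set $G:=\mathscr{D}\times H$, $\hat q:=\hat Q\boxplus\hat q_H$ (orthogonal sum), and $f:=(0,f_H)$; homogeneity, non-degeneracy and the fermion conditions all pass to orthogonal sums, so $(G,\hat q,f)\in\textsf{PSM}$. As the cross terms of $\hat q$ vanish, $f^{\perp}=\mathscr{D}\times\langle f_H\rangle$, so the quotient $G_0/\langle f\rangle$ entering the definition of $F$ is $\mathscr{D}$, a lift of the generator of $G/G_0$ may be chosen as $a=(0,a_H)$ so that the shift in $F$ affects only the $H$-coordinate, and the Gauß sum is multiplicative; evaluating $F$ therefore yields $F(G,\hat q,f)=(\mathscr{D},[\hat Q+c],\sigma_0+8c)=(\mathscr{D},[q],\sigma)$, proving essential surjectivity.

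The step I expect to be the main obstacle is the bookkeeping in the second paragraph: isolating the ``residual'' invariant of $[q]$, after a homogeneous refinement of $b$ has been fixed, as a single constant $c$, and verifying that the Gauß-Milgram relation pins $c$ down to the $\tfrac18$-valued residue $(\sigma-\sigma_0)/8$. Once this is established the remaining construction and the computation $F(G,\hat q,f)=(\mathscr{D},[q],\sigma)$ are mechanical; the only other point needing care is the explicit table of carriers $H$, where one must check both that every nonzero residue $c$ is realised and that $f_H^{\perp}=\langle f_H\rangle$ genuinely holds, as otherwise $\mathscr{D}$ would be enlarged in the image.
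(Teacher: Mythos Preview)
Your argument is correct, but it follows a genuinely different route from the paper's.

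The paper realises a given $(\mathscr{D},[q],\sigma)$ via the lattice description of \cite{BM05}: one picks an integral lattice $\Lambda$ with characteristic class $W_2$ producing $(\mathscr{D},[q])$, passes to the even sublattice $\Lambda_0=\ker\big(\lambda\mapsto (W_2,\lambda)\bmod 2\big)$, and sets $G=\Lambda_0^*/\Lambda_0$ with $\hat q([x])=\tfrac12(x,x)$ and $f$ the nontrivial element of $\Lambda/\Lambda_0$. One then checks $G_0/\langle f\rangle\cong\Lambda^*/\Lambda=\mathscr{D}$ and that the shift by $\tfrac{W_2}{2}$ reproduces $q$.

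Your approach instead strips off a homogeneous refinement $\hat Q$ on $\mathscr{D}$, isolates the residual constant $c=(\sigma-\sigma_0)/8$, and then adjoins a small explicit fermion carrier $H\in\{\Zb_4,\Zb_2{\times}\Zb_2\}$ orthogonally, so that $G=\mathscr{D}\times H$. This is more elementary and explicit: it avoids the lattice realisation (beyond the existence of \emph{some} homogeneous refinement of $b$, which can also be proved directly), and the verification that $F(G,\hat q,f)=(\mathscr{D},[q],\sigma)$ is a short computation. The paper's lattice argument, on the other hand, is more uniform---a single construction with no case split on $8c$---and dovetails naturally with the subsequent fullness proof, which relies on stable equivalence of lattices and Nikulin's results; your product decomposition $G=\mathscr{D}\times H$ would not in general be preserved under the isomorphisms needed there.
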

\begin{proof}
    It was shown in~\cite[Sec.\,6]{BM05} that any tuple $(\mathscr{D},[q],\sigma)$ arises from the data $(\Lambda, [W_2])$, where $\Lambda$ is a lattice with integral bilinear form $(\cdot,\cdot)$ of signature $\sigma$, $W_2\in\Lambda^*/2\Lambda^*$ is a characteristic class satisfying $(W_2,\lambda)\equiv (\lambda,\lambda)\mod 2$ for $\lambda\in\Lambda$, and $\mathscr{D}=\Lambda^*/\Lambda$ is the discriminant group. Then a given representative $W_2$ will define a representative of $[q]$ via $q([x]) = \frac{1}{2}(x,x-W_2)+\frac{1}{8}(W_2,W_2)\mod 1$, which is well-defined due to the defining property of $W_2$. We will consider $\Lambda$ as living in some ambient space $\Qb^N$.

Given that by assumption $q(0)\neq 0$ for all representatives $q$ of $[q]$, we have that $[W_2]\neq[0]$. Define $\Lambda_0 := \ker \left(\Lambda\to\Zb_2,\ \lambda\mapsto (W_2,\lambda)\mod 2\right)$ as the even sublattice. One easily verifies $\Lambda_0^* = \mathrm{span}_\Zb (\Lambda^*, \frac{W_2}{2})$. Now set $G:=\Lambda_0^*/\Lambda_0$ and $\hat q([x]) := \frac{1}{2}(x,x)$. We identify $f$ as the non-trivial element of $\Lambda/\Lambda_0\simeq\Zb_2$ which gives a well-defined element of $G$ that satisfies $2f=0$ and $\hat q(f)=\frac{1}{2}$.

    Next we check that that $\mathscr{D} \cong G_0/\langle f\rangle$. Note that for any lift $\tilde f$ of $f$ to $\Lambda_0^*$ we have $(\tilde f,x)\in\Zb$ for all elements $x\in\Lambda^*$ and $(\tilde f,\frac{W_2}{2})\in \frac{1}{2} + \Zb$. It follows that $G_0$ may be identified as $\Lambda^*/\Lambda_0$ and thus $\mathscr{D} = \Lambda^*/\Lambda \cong (\Lambda^*/\Lambda_0) / (\Lambda / \Lambda_0) = G_0/\langle f\rangle$.

    As shown above, $\frac{W_2}{2}$ is not in $G_0$ and may hence be taken as a representative of the non-trivial class in $G/G_0$. By definition we then have (all taken modulo $1$)
    \[
        q_{\frac{W_2}{2}}([x]) = \hat q(x - \tfrac{W_2}{2}) = \tfrac{1}{2}(x-\tfrac{W_2}{2},x-\tfrac{W_2}{2}) = \tfrac{1}{2}(x,x-W_2) + \tfrac{1}{8}(W_2,W_2) \ ,
    \]
    recovering the original $q$. Lastly, simply imposing the Gauß-Milgram constraint will recover $\sigma$, as $q$ satisfied it to begin with.
\end{proof}
\begin{lemma}
    The functor $F$ is full.
\end{lemma}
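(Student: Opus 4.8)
\emph{Proof proposal.} The plan is, given objects $(G_1,\hat q_1,f_1),(G_2,\hat q_2,f_2)\in\textsf{PSM}$ and a morphism $\psi\colon F(G_1,\hat q_1,f_1)\to F(G_2,\hat q_2,f_2)$ of $\textsf{ASCS}$ — i.e.\ a group isomorphism $\psi\colon\mathscr{D}_1\xrightarrow{\sim}\mathscr{D}_2$ with $[q_{a_2}\circ\psi]=[q_{a_1}]$ — to build a lift $\phi\colon G_1\xrightarrow{\sim}G_2$ with $\hat q_2\circ\phi=\hat q_1$, $\phi(f_1)=f_2$, inducing $\psi$ on the quotients $\mathscr{D}_i=G_{i,0}/\langle f_i\rangle$, where $G_{i,0}=f_i^{\perp}$. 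First I would normalise the data: the freedom in the choice of $a_1$ analysed in the well-definedness proof (moving $a_1$ by an element of $G_{1,0}$ translates $q_{a_1}$ by the corresponding element of $\mathscr{D}_1$, and $G_{1,0}\twoheadrightarrow\mathscr{D}_1$) lets us assume $q_{a_1}=q_{a_2}\circ\psi$ on the nose, not merely up to translation equivalence.

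Next I would split off the fermion. Since $\hat q_i$ is homogeneous and $\hat q_i(f_i)=\tfrac12$, the element $f_i$ cannot lie in $2G_{i,0}$ (otherwise $f_i=2h$ with $h\in G_{i,0}$ would force $\hat q_i(f_i)=4\hat q_i(h)=\hat b_i(h,f_i)=0$), so $\langle f_i\rangle\cong\Zb_2$ is a direct summand of $G_{i,0}$; as $\hat b_i(f_i,-)$ vanishes on $G_{i,0}$, any complement gives an orthogonal decomposition $G_{i,0}=\langle f_i\rangle\perp\mathscr{D}_i'$ of metric groups with $\hat q_i|_{\mathscr{D}_i'}$ non-degenerate (its radical being exactly $\langle f_i\rangle$) and homogeneous. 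Under the identification $\mathscr{D}_i'\cong\mathscr{D}_i$ the forms $\hat q_i|_{\mathscr{D}_i'}$ and $q_{a_i}$ differ only by a linear and a constant term, so the normalised identity $q_{a_1}=q_{a_2}\circ\psi$ forces $\hat q_1|_{\mathscr{D}_1'}$ and $(\hat q_2|_{\mathscr{D}_2'})\circ\psi$ to differ by an element of $\mathrm{Hom}(\mathscr{D}_1,\Zb_2)$ (a difference of homogeneous forms equalling a linear-plus-constant function), which can be absorbed by changing one of the complements (a change by $\lambda\in\mathrm{Hom}(\mathscr{D}_i,\Zb_2)$ shifts $\hat q_i|_{\mathscr{D}_i'}$ by $\tfrac12\lambda$). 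After this adjustment $\psi$ is an isometry $(\mathscr{D}_1',\hat q_1)\xrightarrow{\sim}(\mathscr{D}_2',\hat q_2)$, and $\Phi:=\mathrm{id}_{\langle f_1\rangle}\oplus\psi$ is a form-preserving isomorphism $G_{1,0}\xrightarrow{\sim}G_{2,0}$ with $\Phi(f_1)=f_2$ inducing $\psi$.

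It remains to extend $\Phi$ over $G_1$: choosing any $a_1\in G_1\setminus G_{1,0}$ one must find $\phi(a_1)=\alpha\in G_2\setminus G_{2,0}$ with $\hat b_2(\alpha,\Phi(g))=\hat b_1(a_1,g)$ for all $g\in G_{1,0}$, with $\hat q_2(\alpha)=\hat q_1(a_1)$, and with $2\alpha=\Phi(2a_1)$. Non-degeneracy of $\hat b_2$ on $G_2$ produces an $\alpha$ fulfilling the pairing condition, unique up to $\langle f_2\rangle$ and automatically outside $G_{2,0}$ because $\hat b_1(a_1,f_1)=\tfrac12$; the residual conditions on $\hat q_2(\alpha)$ and $2\alpha$ then have to be matched by choosing between the two candidates and, if necessary, revising the complement from the previous step (which flips the $f_2$-parity of $\Phi(2a_1)$). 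This last step is the main obstacle: it is a finite-abelian-group analogue of Witt's extension theorem relating the restricted form on $G_{2,0}$ to the form on $G_2$, and it is precisely here that $\sigma$ enters — the equality $\sigma_1=\sigma_2$ implied by $\psi$ being a morphism of $\textsf{ASCS}$, via the Gauß–Milgram constraint, is what guarantees the two ``Ramond pieces'' $G_i\setminus G_{i,0}$ with their restricted forms are compatible. I expect to resolve it by a short case analysis on the order of $a_1$ together with the structural results of Theorem~\ref{thm:simple_commutative_frobenius_algebras} and the lattice realisation used in the proof of Lemma~\ref{lem:PSM-ASCS-ess-surj}.
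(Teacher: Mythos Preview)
Your approach is genuinely different from the paper's. The paper proves fullness by heavy lattice theory: it realises each $G_i$ as the discriminant group of an even lattice $\Lambda_{i,0}$, invokes Nikulin's stable-equivalence theorem to find a common stabilisation $\Lambda$, and then Nikulin's lifting theorem to promote $\psi$ to a lattice isomorphism, which descends to the desired $\phi$. Your route is direct and elementary, avoiding Nikulin entirely; when completed it gives a cleaner, self-contained argument.

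The first two steps of your outline are correct and complete. The issue is the third. You correctly pin down $\alpha\in G_2\setminus G_{2,0}$ up to $\langle f_2\rangle$ from the pairing condition, and you correctly observe that $2\alpha-\Phi(2a_1)\in\langle f_2\rangle$; but you then leave the two remaining conditions $\hat q_2(\alpha)=\hat q_1(a_1)$ and $2\alpha=\Phi(2a_1)$ as an ``obstacle'' to be handled by case analysis, Theorem~\ref{thm:simple_commutative_frobenius_algebras}, or the lattice realisation. This is where the proposal is incomplete, and the suggested fixes are off-target: the Frobenius-algebra theorem is irrelevant here, and falling back on lattices defeats the point of the elementary approach.

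In fact the Gau\ss--Milgram constraint you already invoke closes the gap directly, with no case analysis. Split $\sum_{g\in G_i}e^{2\pi i\hat q_i(g)}$ over the two $G_{i,0}$-cosets; the even-coset sum vanishes (by $\hat q_i(g+f_i)=\hat q_i(g)+\tfrac12$ on $G_{i,0}$), so the full Gau\ss\ sums equal $e^{2\pi i\hat q_1(a_1)}\sum_{g\in G_{1,0}}e^{2\pi i(\hat b_1(a_1,g)+\hat q_1(g))}$ and $e^{2\pi i\hat q_2(\alpha)}\sum_{h\in G_{2,0}}e^{2\pi i(\hat b_2(\alpha,h)+\hat q_2(h))}$ respectively. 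Substituting $h=\Phi(g)$ and using that $\Phi$ is an isometry together with your pairing condition on $\alpha$, the two inner sums coincide; they are nonzero since the total Gau\ss\ sum is a root of unity. Hence $\sigma_1=\sigma_2$ forces $\hat q_2(\alpha)=\hat q_1(a_1)$. Then $\hat q_2(2\alpha)=4\hat q_2(\alpha)=4\hat q_1(a_1)=\hat q_2(\Phi(2a_1))$, while $\hat q_2(\Phi(2a_1)+f_2)=\hat q_2(\Phi(2a_1))+\tfrac12$; this rules out $2\alpha=\Phi(2a_1)+f_2$, so $2\alpha=\Phi(2a_1)$ automatically. No revision of the complement, and no case analysis on the order of $a_1$, is needed.
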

\begin{proof}
Let $(G_i,\hat q_i,f_i) \in \textsf{ASCS}$, $i=1,2$, be given, and write $(\mathscr{D}_i,[q_i],\sigma_i) = F(G_i,\hat q_i,f_i)$. Take a morphism $\phi:(\mathscr{D}_1,[q_1],\sigma_1) \to (\mathscr{D}_2,[q_2],\sigma_2)$. We will construct a preimage of $\phi$ (the dashed arrow below) by realising
    \[
        \begin{tikzcd}[column sep=tiny, row sep=tiny]
            G_{\Lambda'_0} \ar[ddrr] & G_1 \ar[l]\ar[rr,dashed] & & G_2 \ar[r] & G_{\Lambda''_0} \ar[lldd] & & & & & \mathscr{D}_{\Lambda'}\ar[ddrr] & \mathscr{D}_1\ar[l]\ar[rr,"\phi"] & & \mathscr{D}_2\ar[r] & \mathscr{D}_{\Lambda''}\ar[lldd] \\
            & & & & &  \phantom{.}\ar[rrr, "F", mapsto] & & &\phantom{.} & & & & & & ,\\
            & & G_{\Lambda_0} & & & & & & & & & \mathscr{D}_\Lambda & &
        \end{tikzcd}
    \]
    where the left hand side is a lift of the commutative diagram on the right hand side. Since all morphisms are isomorphisms, constructing all non-dashed arrows on the right hand side will suffice. The subscripts on the groups indicate that the corresponding elements of $\textsf{PSM}$ or $\textsf{ASCS}$ are constructed as discriminant groups of the lattice in the subscript. The top left (and top right, by analogy) parts of the triangles are constructed by choosing even lattices $\Lambda'_0$ and $\Lambda''_0$ inducing $G_{\Lambda'_0}\cong G_1$ and $G_{\Lambda''_0}\cong G_2$. We get $\Lambda'$ and $\Lambda''$ by adding a representatives of $f$ and $[a] \in G_i/(G_i)_0$ to the respective lattice.

    Next we construct $\mathscr{D}_\Lambda$. By construction, the discriminant groups $\mathscr{D}_{\Lambda'}$ and $\mathscr{D}_{\Lambda''}$ are isomorphic in a way compatible with their quadratic forms and signature modulo $8$. Hence, the lattices $\Lambda'$ and $\Lambda''$ are stably equivalent (that is there exist unimodular lattices $U,V$ such that $\Lambda'\oplus U\cong\Lambda''\oplus V$), see~\cite[Thm.\,1.3.1]{Nik79}. Since $\Lambda'$ and $\Lambda''$ had the same signature (modulo 8) to begin with, we know that $U$ and $V$ have the same signature (modulo 8), which we may take to be $0$ (extending trivially by adding copies of $\Zb$ on both sides, if necessary). Set then $\Lambda:=\Lambda'\oplus U$.

    There is the obvious isomorphism $\mathscr{D}_{\Lambda'}\to\mathscr{D}_\Lambda$ induced by the inclusion $\Lambda'\hookrightarrow\Lambda$ giving the left diagonal arrow of the right triangle. To construct the right diagonal arrow we also begin by taking the isomorphism $\mathscr{D}_{\Lambda''}\to\mathscr{D}_{\Lambda''\oplus V}$ induced by the inclusion $\Lambda''\hookrightarrow \Lambda''\oplus V$. We then have the isomorphism $h:\mathscr{D}_{\Lambda''\oplus V} \to \mathscr{D}_{\Lambda''} \to \mathscr{D}_{2} \xrightarrow{\phi^{-1}} \mathscr{D}_{1} \to \mathscr{D}_{\Lambda'} \to \mathscr{D}_\Lambda = \mathscr{D}_{\Lambda'\oplus U}$. That isomorphism can be lifted to the lattices by the results of~\cite[Thm.\,1.16.10]{Nik79} (increasing the rank of $U$ and $V$ by a unimodular signature zero lattice if necessary to meet the conditions in the theorem). The composition of $\Lambda''\hookrightarrow \Lambda''\oplus V$ with the lifted isomorphism then gives the right diagonal arrow in the right diagram, making the diagram commute by construction.

    It thus remains to show that adding a unimodular lattice of signature 0 (mod 8) to $\Lambda'$ yields isomorphic tuples $(G,\hat q,f)$ via the construction in Lemma~\ref{lem:PSM-ASCS-ess-surj}.

    Consider then the lattice $\Lambda = \Lambda'\oplus U$ and the corresponding discriminant group $G_{\Lambda_0}$. A $W_2$ class on $\Lambda$ splits as $W_2 = W_2' + W_2^U$. Note that due to the properties of the Gauß sum on discriminant groups and unimodularity of $U$ we know that $(W_2^U,W_2^U)\equiv \sigma^U \equiv 0$ modulo 8, see~\cite[Prop.\,5.42]{HS05}. In particular we know that $\frac{W_2^U}{2}$ is even. The kernel $\Lambda_0$ splits into $\Lambda_0 = \Lambda'_0\oplus U_0 + (\Lambda'\setminus\Lambda'_0\oplus U\setminus U_0)$. Choosing generators of $\Lambda'/\Lambda_0'$ and $U/U_0$ as $f'$ and $f_U$ we may rewrite this as $\Lambda_0 = \Lambda_0' + U_0 + \langle f \rangle$ with $f := f' + f_U$, where we use $\langle\cdot\rangle$ to mean the $\Zb$-span in the ambient $\Qb^N$. Next consider the dual $\Lambda_0^*$. As above it is generated as ${\Lambda}^*+\langle \frac{W_2}{2}\rangle = {(\Lambda')}^* + U + \langle\frac{W_2}{2}\rangle$.

    We now calculate
    \begin{align*}
        \frac{{(\Lambda_0)}^*}{\Lambda_0} &= \frac{{(\Lambda')}^* + U +\langle\frac{W_2}{2}\rangle}{\Lambda'_0 + U_0+\langle f\rangle}\\
        &\overset{(1)}{\cong} \frac{{(\Lambda')}^* + \langle f_U\rangle + \langle\frac{W_2'}{2}\rangle}{\Lambda_0'+\langle f\rangle}\\
        &\overset{(2)}{\cong} \frac{{(\Lambda')}^*+\langle\frac{W_2'}{2}\rangle}{\Lambda'_0} = \frac{{(\Lambda_0')}^*}{\Lambda_0'}.
    \end{align*}
    Here the isomorphisms $(1)$ stems from dividing out the common subgroup $U_0$ using $U=U_0+\langle f_U\rangle$, and the isomorphism $(2)$ comes from dividing out $\langle f\rangle$, using $(\Lambda')^*+\langle f_U\rangle = (\Lambda')^*+\langle f\rangle$ since $f=f'+f_U$ and $f'\in(\Lambda')^*$. Note that along this equivalence the induced quadratic form is preserved as we only divide by even elements.

    The left and right diagonal maps therefore lift to the left diagram.
\end{proof}
\begin{lemma}
    We have $\mathrm{ker}(F:\mathrm{Aut}(G)\to\mathrm{Aut}(\mathscr{D}))\simeq \Zb_2$.
\end{lemma}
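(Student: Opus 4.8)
The plan is to pin down $\ker F$ by describing its elements explicitly. Recall that an object of $\textsf{PSM}$ is a triple $(G,\hat q,f)$ and that an automorphism $\phi\in\mathrm{Aut}(G)$ is a group automorphism with $\hat q\circ\phi=\hat q$ and $\phi(f)=f$; such a $\phi$ also preserves the associated bilinear form $\hat b$, hence preserves $G_0=\{g\mid\hat b(g,f)=0\}$, and $F(\phi)$ is the induced automorphism of $\mathscr{D}=G_0/\langle f\rangle$. Thus $\phi\in\ker F$ exactly when $\phi(x)-x\in\langle f\rangle$ for all $x\in G_0$.

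First I would show that any $\phi\in\ker F$ is the identity on $G_0$. The map $x\mapsto\phi(x)-x$ is a homomorphism $G_0\to\langle f\rangle\cong\Zb_2$, and since $\hat b(x,f)=0$ for $x\in G_0$ the homogeneity of $\hat q$ gives $\hat q(\phi(x))=\hat q(x)+\hat q(\phi(x)-x)$. Invariance of $\hat q$ forces $\hat q(\phi(x)-x)=0$, and as $\hat q(f)=\tfrac12\neq0$ this means $\phi(x)=x$.

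Next I would note that, since $\hat b$ is non-degenerate and $2f=0$, $f\neq0$, the pairing $\hat b(-,f)$ takes values in $\{0,\tfrac12\}$ and is onto with kernel $G_0$, so $[G:G_0]=2$; fix $a\in G\setminus G_0$, whence $\hat b(a,f)=\tfrac12$ and $2a\in G_0$. A $\phi\in\ker F$ acts as the identity on $G_0$ and (trivially, since $\mathrm{Aut}(\Zb_2)=1$) on $G/G_0$, hence is determined by $\phi(a)=a+g_0$ with $g_0\in G_0$, subject to $2g_0=0$ (forced by $\phi(2a)=2a$). Writing out invariance of $\hat q$ on every element $a+h$, $h\in G_0$, gives $\hat b(a,g_0)+\hat b(h,g_0)+\hat q(g_0)=0$ for all $h$; subtracting the case $h=0$ yields $\hat b(h,g_0)=0$ for all $h\in G_0$. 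A direct computation identifies the bilinear form of $q_a$ on $\mathscr{D}$ with the descent of $\hat b|_{G_0}$ along $G_0\twoheadrightarrow\mathscr{D}$, so its non-degeneracy (which follows from non-degeneracy of $\hat b$, the radical of $\hat b|_{G_0}$ being exactly $\langle f\rangle$) forces $g_0\in\langle f\rangle=\{0,f\}$.

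Finally I would check that both values of $g_0$ occur: $g_0=0$ gives $\phi=\id$, while $g_0=f$ satisfies $2f=0$ and $\hat b(a,f)+\hat b(h,f)+\hat q(f)=\tfrac12+0+\tfrac12=0$ for all $h\in G_0$, so it defines an automorphism $\phi_f$ (note $f\in G_0$ since $\hat b(f,f)=2\hat q(f)=0$, so $\phi_f$ fixes $f$), which is non-trivial because $f\neq0$ and satisfies $\phi_f^2=\id$. Hence $\ker F=\{\id,\phi_f\}\cong\Zb_2$. The only step requiring care is the third one: keeping apart the constraints coming from $\phi$ being a well-defined homomorphism and those coming from $\hat q$-invariance, and matching the surviving condition $\hat b(-,g_0)|_{G_0}=0$ with non-degeneracy of $q_a$ on the quotient; everything else is formal.
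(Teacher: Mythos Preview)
Your proof is correct and follows essentially the same strategy as the paper's: show $\phi$ is the identity on $G_0$, then show the only freedom on $G\setminus G_0$ is the shift by $f$. The one notable difference is that where the paper invokes an external reference (\cite[Lem.\,3.5]{Bla03}) to conclude that an element of $G_0$ orthogonal to all of $G_0$ lies in $\langle f\rangle$, you derive this directly from non-degeneracy of $\hat b$ and $[G:G_0]=2$; your version is slightly more self-contained. A minor wording quibble: the identity $\hat q(\phi(x))=\hat q(x)+\hat q(\phi(x)-x)$ you use in the first step is not ``homogeneity'' but just the defining relation $\hat q(u+v)=\hat q(u)+\hat q(v)+\hat b(u,v)$ together with $\hat b(x,f)=0$.
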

\begin{proof}
    First we show that for any $\phi\in F^{-1}(\id_\mathscr{D})$ we have $\phi\big|_{G_0}=\id_{G_0}$. By definition $\phi$ must fix $f$ and by being a lift of the identity on $\mathscr{D}$ it must also fix the class $[g]\in\mathscr{D}$ in of any $g\in G_0$. Hence there is a map $\mu:G_0\to\{0,1\}$ such that $\phi(g) = g + \mu(g)f$. But since we must have $\hat q = \hat q\circ\phi$ we can compute, for all $g \in G_0$,
    $$
        \hat q(g) = \hat q(\phi(g)) = \hat q(g + \mu(g)f)
        = \hat b(g,\mu(g)f) + \hat q(g) + \hat q(\mu(g)f) \ .
    $$
    Using $\hat b(g,\mu(g)f)=0$ and $\hat q(\mu(g)f) = \frac12 \mu(g)$, we conclude $\mu(g)=0$ for all $g$.

    For an element $g\in G\setminus G_0$ consider the difference $\phi(g)-g\in G_0$. For any $g_0\in G_0$ we have
    \[
        \hat b(g_0,\phi(g)-g) = \hat b(g_0,\phi(g)) - \hat b(g_0,g) = \hat b(\phi(g_0),\phi(g))-\hat b(g_0,g) = 0\,,
    \]
    where we use that $\phi$ fixes $g_0$ and preserves the bilinear form. It follows that $\phi(g)-g$ is transparent with respect to $\hat b$. From the theory of spin modular categories we know that this means that $\phi(g)-g$ is either $0$ or $f$, see e.g.~\cite[Lem.\,3.5]{Bla03}.

    Once again we find that $\phi(g) = g + \nu(g)f$ for some $\nu:G\setminus G_0\to\{0,1\}$. But since for any $g,g'\in G\setminus G_0$ we have $g-g'=\phi(g)-\phi(g')\in G_0$ we find $\nu(g)=\nu(g')$. This leaves at most two possibilities for $\phi$, and these are indeed both automorphism of $(G,\hat q,f)$. Explicitly, $F^{-1}(\id_\mathscr{D})=\{\id_G,\varphi\}$ where $\varphi$ is given by $\varphi(x)=x+2b(x,f)f$.
\end{proof}

This completes the proof of Proposition~\ref{prop:spin-abelian-Chern-Simons}.

\medskip

Having established that the underlying data for both the gauging construction and the classification in~\cite{BM05} agree we will now compare the theories themselves. For the dimension of the state spaces in loc.\ cit.\ (Sec. 5) a basis of states indexed by $g$ copies of the discriminant group $\mathscr{D}$ on a genus $g$ surface is given. The ungauged state spaces in our theory have dimension ${|G|}^g$ and as we pointed out in Example~\ref{ex:pointed_spin_state_space}\,(1) they split evenly along the $2^{2g}$ spin structures. Thus we also arrive at a dimension of $\frac{{|G|}^g}{4^g}={|\mathscr{D}|}^g$.

We may further compare the representations of the spin mapping class group induced by the two theories. For simplicity we will only look at the unpunctured torus here. We compute the mapping class group action on the gauged spin theory by applying Theorem~\ref{thm:spin_refinement}. In the oriented case the (projective) action is simply given by the $S$- and $T$-matrix. It is constructed by evaluating the bordism $M=T\times[0,1]\cup_\varphi T\times\{1\}$, where $\varphi$ is a representative of the corresponding mapping class group element. The spin version works in the same way, but $\varphi$ will in general change the spin structure on $\Sigma$. The induced representation of the mapping class group is again constructed by evaluation of the glued (spin) bordism $M$ under the TFT. It is here that Theorem~\ref{thm:spin_refinement} tells us that summing over the spin structures on $M$ will yield the oriented $S$- and $T$-matrices. In particular we have that the state spaces embed as
\[
    \Zspin(T^2,\sigma)\subset \Zor(T^2;A)=\Zor(T^2)\,,
\]
where the equality follows from the fact that the $f$-punctured torus has a zero-dimensional state space. The restriction of the bordisms $M_{T,S}$ implementing the $T$- and $S$-transformations on the torus split as
\[
    \Zor(M_{T,S})\big|_{\Zspin(T^2,\sigma)} = \frac{1}{2} \big( \Zspin(M_{T,S},s_1) + \Zspin(M_{T,S},s_2)\big)\,,
\]
where $s_1$ and $s_2$ are the two spin structures that can be put on $M_{T,S}$ for a given fixed spin structure on the boundary. A priori these may differ, but the difference only shows in the part of the spin state space arising from the $f$-puncture, which is trivial in the case here. We thus are simply left with
\[
    \Zor(M_{T,S})\big|_{\Zspin(T^2,\sigma)} = \Zspin(M_{T,S},s_1)\,.
\]
Extracting the mapping class group action is now a simple matter of linear algebra. There are four spin structures on the torus which may be given in terms of the difference to the standard spin structure on the handle-body in terms of the symplectic basis as a tuple in $(\alpha_i,\beta_i)$, see e.g.~\cite{Bel98} for a closer description. Recalling the projectors from Figure~\ref{fig:state_space_projector}, it is easy to see that a basis of the state spaces with $\alpha_i=0$ is given by the span of $\mathrm{coev}_g + \mathrm{coev}_{f \otimes g} \in \Cc(1,g\otimes g^*\oplus (f\otimes g)\otimes{(f\otimes g)}^*)\simeq k^2$, and for $\alpha_i=1$ is given by the span of $\mathrm{coev}_g - \mathrm{coev}_{f \otimes g}$, where $g$ runs over a set of representatives of $G_0/\langle f\rangle$ if $\beta_i=0$ and over $G_1/\langle f\rangle$ if $\beta_i=1$, where $G_1:=G\setminus G_0$. Denote these basis vectors $e_g^\pm$.

It remains to write the $T$- and $S$-matrices in the basis $e_g^\pm$. The $T$-matrix is diagonal in the oriented theory with the standard basis for the state space. It is easy to determine that in the basis $e_g^\pm$ the $T$-matrix acts as
\begin{align*}
    e_g^+ &\mapsto e^{-2\pi i\hat q(g)} e_g^-\>, & e_g^- &\mapsto e^{-2\pi i\hat q(g)} e_g^+\>, & &(g\in G_0)\\
    e_g^+ &\mapsto e^{-2\pi i\hat q(g)} e_g^+\>, & e_g^- &\mapsto e^{-2\pi i\hat q(g)} e_g^-\>. & &(g\in G_1)
\end{align*}
For the $S$-matrix one finds
\begin{align*}
    e_g^+ &\mapsto \frac{2}{|G|^\frac{1}{2}}\sum_{[g']\in G_0/\langle f\rangle} e^{2\pi i\hat b(g,g')} e_{g'}^+\>, & e_g^- &\mapsto \frac{2}{|G|^\frac{1}{2}}\sum_{[g']\in G_1/\langle f\rangle} e^{2\pi i\hat b(g,g')} e_{g'}^+\>, & &(g\in G_0)\\
    e_g^+ &\mapsto \frac{2}{|G|^\frac{1}{2}}\sum_{[g']\in G_0/\langle f\rangle} e^{2\pi i\hat b(g,g')} e_{g'}^-\>, & e_g^- &\mapsto \frac{2}{|G|^\frac{1}{2}}\sum_{[g']\in G_1/\langle f\rangle} e^{2\pi i\hat b(g,g')} e_{g'}^-\>. & &(g\in G_1)
\end{align*}

To compare this to the results of~\cite{BM05} we need to introduce some of their notation. There, a basis of the state spaces is given by the physical wave functions
\[
    \Big(\Psi_{\gamma,\frac{\beta}{2},\frac{\alpha}{2}}^\text{phys}\Big)_{\gamma\in \mathscr{D}}\>,
\]
where $\alpha$ and $\beta$ encode the spin structure on the torus. (Note that their convention for the order of $\alpha$ and $\beta$ is reversed from ours.) Further they denote an operator $U$ going from a torus with spin structure $\beta_1,\alpha_1$ to one with spin structure $\beta_2,\alpha_2$ by $U\text{\tiny$\setlength{\arraycolsep}{2pt} \begin{bmatrix}\beta_2 & \alpha_2\\ \beta_1 & \alpha_1\end{bmatrix}$}$. Then the matrices of~\cite[Sec.\,5.6]{BM05} can be written as\footnote{Note also that these are not precisely the matrices as given in~\cite[Sec.\,5.6.1]{BM05}. The physical wave functions $\Psi^\mathrm{phys}$ actually depend on a choice of representative of $[\frac{\alpha}{2}]\in\frac{1}{2}\Zb/\Zb$ (and the same for $\beta$). Changing the choice will incur an additional phase, and is more convenient for as to make the choices as given above. In practice only $T\text{\tiny$ \setlength{\arraycolsep}{2pt} \begin{bmatrix} 0 & 1\\ 0 & 0\end{bmatrix}$}$ is different.}:
\begin{align*}
    T\text{\tiny$\setlength{\arraycolsep}{2pt} \begin{bmatrix} 0 & 1\\ 0 & 0\end{bmatrix}$}_\gamma^{\gamma'} &= \delta_\gamma^{\gamma'} e^{2\pi i\sigma/24 - 2\pi i [q(\gamma) - q(0)]},\\
    T\text{\tiny$\setlength{\arraycolsep}{2pt} \begin{bmatrix} 0 & 0\\ 0 & 1\end{bmatrix}$}_\gamma^{\gamma'} &= \delta_\gamma^{\gamma'} e^{2\pi i\sigma/24 - 2\pi i [q(-\gamma) - q(0)]},\\
    T\text{\tiny$\setlength{\arraycolsep}{2pt} \begin{bmatrix} 1 & 0\\ 1 & 0\end{bmatrix}$}_\gamma^{\gamma'} = T\text{\tiny$\setlength{\arraycolsep}{2pt} \begin{bmatrix} 1 & 1\\ 1 & 1\end{bmatrix}$}_\gamma^{\gamma'} &= \delta_\gamma^{\gamma'} e^{2\pi i\sigma/24 - 2\pi i q(-\gamma)}
\end{align*}
and
\begin{align*}
    S\text{\tiny$\setlength{\arraycolsep}{2pt} \begin{bmatrix} 0 & 0\\ 0 & 0\end{bmatrix}$}_\gamma^{\gamma'} = S\text{\tiny$\setlength{\arraycolsep}{2pt} \begin{bmatrix} 0 & 1\\ 1 & 0\end{bmatrix}$}_\gamma^{\gamma'} &= |\mathscr{D}|^{-\frac{1}{2}}e^{2\pi i b(\gamma,\gamma')},\\
    S\text{\tiny$\setlength{\arraycolsep}{2pt} \begin{bmatrix} 1 & 0\\ 0 & 1\end{bmatrix}$}_\gamma^{\gamma'} &= |\mathscr{D}|^{-\frac{1}{2}}e^{2\pi i b(\gamma,\gamma'+W_2)},\\
    S\text{\tiny$\setlength{\arraycolsep}{2pt} \begin{bmatrix} 1 & 1\\ 1 & 1\end{bmatrix}$}_\gamma^{\gamma'} &= |\mathscr{D}|^{-\frac{1}{2}}e^{2\pi i b(\gamma,\gamma'+W_2) + 4\pi i q(0)}.
\end{align*}
The factors of $e^{2\pi i \sigma/24}$ in the $T$-matrix come from deprojectifying the representation of the mapping class group which we will ignore those.

Then identifying $2a=W_2$ and sending
\begin{align*}
    e^+_g &\mapsto \Psi_{[g],0,0}^\text{phys}\>, & e^-_g &\mapsto e^{-2\pi i \hat b(a,g)}\Psi_{[g],0,\frac{1}{2}}^\text{phys}\>, & (g\in G_0)\\
    e^+_g &\mapsto \Psi_{[g-a],\frac{1}{2},0}^\text{phys}\>, & e^-_g &\mapsto e^{-2\pi i \hat b(a,g)}\Psi_{[g-a],\frac{1}{2},\frac{1}{2}}^\text{phys}\>, & (g\in G_1)
\end{align*}
yields an isomorphism of the mapping class group representations.

\newpage

\printbibliography

\end{document}